\newcommand{\Langle}{\langle\!\langle}
\newcommand{\Rangle}{\rangle\!\rangle}
\newtheorem*{warning}{Warning}
\newtheorem*{convention}{Convention}
\def\mod{\quad\textup{mod }}
\def\JX{\Jac_X}
\def\L{L}
\def\ct{\cC_{\Theta}^{\textup{sf}}}
\def\mod{\quad\textup{mod }}
\begin{document}

\title{Brauer relations, isogenies and parities of ranks}
\author{Alexandros Konstantinou}

\address{University College London, London WC1H 0AY, UK}
\email{alexandros.konstantinou.16@ucl.ac.uk}

\begin{abstract}
The present paper illustrates the utility of Brauer relations, Galois covers of curves and the theory of regulator constants in the context of studying isogenies between Jacobians and their relevance to the parity conjecture. This framework presents a unified approach, enabling the reconstruction of a diverse array of classical isogenies and the derivation of local expressions for Selmer rank parities, drawing from an extensive body of existing literature. These include the local expressions found in the works of Mazur--Rubin (dihedral extensions), Coates--Fukaya--Kato--Sujatha ($p^g$ isogenies), Kramer--Tunnell (quadratic twists of elliptic curves), Dokchitser--Maistret (Richelot isogenies), and Docking (prym construction).
\end{abstract}

\maketitle

\setcounter{tocdepth}{1}

\tableofcontents
\section{Introduction}\label{Sec_intro}

\subsection{Uniform results for isogenies and rank parity expressions}
The main motivation for this paper stems from \cite{tamroot} in which Brauer relations are used to establish the parity conjecture for a certain class of twists of elliptic curves. Building upon this groundwork, \cite{DGKM} introduces an expanded framework, enabling a broader scope by allowing $K$-automorphism subgroups on curves and pseudo Brauer relations. A Brauer relation is defined as a $\mathbb{Z}$-linear combination of subgroups of a finite group $G$, say $\sum_{i} H_{i} - \sum_{j} H_{j}'$, arising from a $G$-representation isomorphism
\[\bigoplus_{i} \textup{Ind}_{H_i}^{G}\mathbf{1} \cong \bigoplus_{j} \textup{Ind}_{H_j'}^{G}\mathbf{1} \ \ (H_i,H_j' \leq G). \] 

The notion of a pseudo Brauer relation provides an extension of these and is recalled in Definition \ref{def:pseud_brauer_curves} below. One of the main results in \cite{DGKM} asserts that, for a finite group $G$ acting on a curve $X$, a pseudo Brauer relation represented by $ \sum_{i} H_{i} - \sum_{j} H_{j}'$ induces an isogeny between Jacobians
\begin{equation} \label{isogeny_from_relation} \prod_{j} \textup{Jac}_{X/H_{j}'} \to \prod_{i} \textup{Jac}_{X/H_i}.  \end{equation}

Let us call such an isogeny \textit{pseudo Brauer verifiable}, see Definition \ref{brauer_verifiability_definition} for details. One of the central results in this paper establishes pseudo Brauer verifiability for a range of classical isogenies between Jacobians.
\begin{theorem}[Theorems \ref{pseudo_brauer_KT},  \ref{pseudo_brauer_richelot},  \ref{pseudo_brauer_verifiability_prym},  \ref{pseudo_brauer_verifiability_EC}, \ref{complementary_elliptic_curve}]  \label{pseudo-Brauer-verifiability}
In each of the following cases, there exists an isogeny $A \to A'$, and this isogeny is pseudo Brauer verifiable.
\begin{enumerate}
\item $A$ is the Weil restriction $\Res_{K(\sqrt{d})/K} \Jac_{Y}$ and $A'$ is $\Jac_Y \times \Jac_{Y^d},$
where $Y$ is a hyperelliptic curve and $Y^d$ its quadratic twist by $d\in K^\times$,
\item $A$ and $A'$ are Jacobians of C2D4 genus $2$ curves admitting a polarized $(2,2)$ isogeny (also known as a {Richelot isogeny}),
\item $A$ is $\Jac_{\tilde{Z}}$ and $A'$ is $\Jac_Z \times \textup{Prym}(\tilde{Z}/Z)$ where $\tilde{Z}$ is a (potentially trivial) twist of a curve admitting an unramified double cover to a trigonal curve $Z$,
\item $A$ and $A'$ are elliptic curves admitting an isogeny of prime degree $p$,
\item $A$ is $\textup{Jac}_{Y}$, where $Y$ is a curve of genus $2$ with a generic cover to an elliptic curve $Y\to E$ of prime degree $p$, $A'$ is $E \times E'$ for some complimentary elliptic curve $E'$. 
\end{enumerate} 
\end{theorem}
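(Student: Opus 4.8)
\subsubsection*{Proof strategy}

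The plan is to establish each of the five cases separately, in every instance reducing the statement to the production of a single triple $(X,G,\Theta)$: a smooth projective curve $X$ over $K$, a finite group $G$ acting on $X_{\bar K}$ in a $G_K$-stable way (a $K$-automorphism group in the sense of \cite{DGKM}, so that $X$ need not be geometrically connected and $G_K$ may act non-trivially on $G$), and an explicit pseudo Brauer relation $\Theta=\sum_i H_i-\sum_j H_j'$ in $G$, chosen so that the quotients $X/H_i$ and $X/H_j'$ are isomorphic over $K$ to (twists of) exactly the curves whose Jacobians appear as the factors of $A'$ and $A$. Granting such data, \eqref{isogeny_from_relation} automatically furnishes an isogeny $\prod_j\Jac_{X/H_j'}\to\prod_i\Jac_{X/H_i}$, and it only remains to recognise it, up to the equivalence built into Definition \ref{brauer_verifiability_definition}, as the classical isogeny $A\to A'$. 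This is carried out in Theorems \ref{pseudo_brauer_KT}, \ref{pseudo_brauer_richelot}, \ref{pseudo_brauer_verifiability_prym}, \ref{pseudo_brauer_verifiability_EC} and \ref{complementary_elliptic_curve} respectively.

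For the two twisted cases (1) and (3) the construction genuinely exploits the $K$-automorphism flexibility. In (1) I would take $X=Y\times_K K(\sqrt d)$ --- a geometrically disconnected curve over $K$ whose relative Jacobian is $\Res_{K(\sqrt d)/K}\Jac_Y=A$ --- and $G=C_2\times C_2=\langle\sigma\rangle\times\langle\iota\rangle$, where $\iota$ is the hyperelliptic involution of $Y$ and $\sigma$ implements $\mathrm{Gal}(K(\sqrt d)/K)$. The standard Brauer relation $[H_1]+[H_2]+[H_3]-[\{e\}]-2[G]$ of $C_2\times C_2$ (with $H_1=\langle\sigma\rangle$, $H_2=\langle\iota\rangle$, $H_3=\langle\sigma\iota\rangle$) has quotients $X/H_1\cong Y$, $X/H_2\cong\mathbb P^1$, $X/H_3\cong Y^d$ and $X/G\cong\mathbb P^1$, so that \eqref{isogeny_from_relation} becomes $A\to\Jac_Y\times\Jac_{Y^d}=A'$ after deleting the genus-$0$ quotients; the one substantive local point is that descent along $\sigma\iota$ twists $Y$ by the quadratic character of $K(\sqrt d)/K$ through $\iota$, producing precisely $Y^d$. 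Case (3) proceeds in the same spirit, with $G$ a group of order $12$ (of ``Prym type'', roughly $C_2\times S_3$) acting on a possibly twisted form of the Galois closure $X$ of $\tilde Z\to Z\to\mathbb P^1$ (the last map coming from a $g^1_3$ on $Z$); one identifies $X$ modulo a transposition-type subgroup with $Z$ and the remaining relevant quotient with $\textup{Prym}(\tilde Z/Z)$, and selects a Brauer relation of $C_2\times S_3$ whose induced isogeny is $\Jac_{\tilde Z}\to\Jac_Z\times\textup{Prym}(\tilde Z/Z)$, the possible twist incorporated exactly as in (1).

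Cases (2), (4) and (5) use covers by (possibly twisted) honest groups. Case (2) works with a $2$-group $G$ of ``C2D4 type'' (in the sense of Dokchitser--Maistret, sitting inside $C_2\wr S_3$ and recording a factorisation of the genus-$2$ sextic into one rational pair and one conjugate pair of quadratic pairs), acting on the associated $2$-torsion cover of $\mathbb P^1$; a suitable relation in $G$ yields the Richelot isogeny between $\Jac_C$ and $\Jac_{C'}$, and a short extra argument shows its kernel is isotropic for the Weil pairing, hence the isogeny is polarized of type $(2,2)$. Cases (4) and (5) run through metacyclic groups: for (5) I would take $X$ to be the Galois closure of the degree-$p$ map $Y\to E$ (with dihedral group $D_{2p}$ under the genericity hypothesis, possibly enlarged by the hyperelliptic involution), invoke the essentially unique dihedral relation $2[C_2]+[C_p]-[\{e\}]-2[D_{2p}]$, and extract from \eqref{isogeny_from_relation} the splitting $\Jac_Y\sim E\times E'$, with $E'$ the complementary curve assembled from the remaining quotients; for (4), one works over the cyclic extension $L/K$ cut out by the character $\chi$ of $\ker(E\to E')$, with $G=C_p\rtimes C_m$ ($m=\mathrm{ord}(\chi)\mid p-1$, so $G=D_{2p}$ when $\chi$ is quadratic) acting on a curve built from $E_L$, using the corresponding relation $m[C_m]+[C_p]-[\{e\}]-m[G]$ (a pseudo Brauer relation in this possibly-disconnected $K$-automorphism setting) so that the $\Theta$-isogeny is $E\to E'$.

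The main obstacle throughout is the hands-on geometric identification of the quotients $X/H$ with the curves named in the statement, together with --- in the twisted cases (1), (3) and in case (4) for non-quadratic $\chi$ --- the verification that the Galois descent data is compatible: one must check that $G_K$ permutes the subgroups $H_i$ of $\Theta$ exactly as prescribed by the relevant cocycle, so that the abstract map of \eqref{isogeny_from_relation} is the classical isogeny and not merely some isogeny between $K$-isogenous targets. Concretely this requires writing down affine models of $X$ and of each $X/H$, computing ramification and genera via Riemann--Hurwitz, and --- whenever $\Theta$ is not a Brauer relation of $G$ in the strict sense --- checking that the ``defect'' virtual representation of $\Theta$ has no isotypic overlap with $H^1(X)$, so that $\Theta$ qualifies as a pseudo Brauer relation for $(X,G)$ in the sense of Definition \ref{def:pseud_brauer_curves}. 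I expect the genuinely delicate point to be, in cases (4) and (5), pinning down the correct ambient group and cover in the first place: the literature phrases these results via Selmer groups and Galois characters rather than via covers of curves, so there is real work in recognising the relevant $p$-isogeny, resp. degree-$p$ map, inside a Brauer-relation picture, and in tracking how the non-existence of higher-order twists of an elliptic curve forces certain terms to cancel on both sides.
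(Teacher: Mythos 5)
Your overall strategy (produce a triple $(X,G,\Theta)$ whose quotients realise the named curves and let \eqref{isogeny_from_relation} do the rest) is exactly the paper's, and your case (1) coincides with Theorem \ref{pseudo_brauer_KT}: the same $C_2\times C_2$-action on ${}_K(Y_{K(\sqrt d)})$ and the same relation. But the concrete data you commit to in the remaining cases do not work. In case (3) the group is wrong: the Galois closure of $\tilde Z\to Z\to\mathbb{P}^1$ has geometric monodromy $S_4$ (arithmetically $C_2\times S_4$, cut down to $S_4$ by the twist by $d$), not a group of order $12$ of the form $C_2\times S_3$; the Prym only appears as $\textup{Jac}_{X_{\tilde Z_d}/S_3}$ for the non-normal $S_3\le S_4$ of the trigonal construction, and the relation used is $C_2^2-D_8-S_3+S_4$ (Theorem \ref{pseudo_brauer_verifiability_prym}). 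In case (5) the group is again wrong: under the genericity hypothesis the closure of the degree-$p$ map has monodromy $S_p$, not $D_{2p}$, and the paper works with the Galois closure of $Y\to E\to\mathbb{P}^1_u$ with group $S_p\times C_2$; the complementary curve is $X_Y/H'$ with $H'\cong S_{p-2}\times C_2$, and the relation $(S_{p-1}\times\{e\})-(S_p\times\{e\})-H'$ is \emph{not} a Brauer relation, so its admissibility requires the monodromy/equivariant Riemann--Hurwitz computation of Theorems \ref{Tate_module_X_Y} and \ref{complementary_elliptic_curve}. Even granting your $D_{2p}$-closure, the dihedral relation $2[C_2]+[C_p]-[\{e\}]-2[D_{2p}]$ could not verify $\textup{Jac}_Y\to E\times E'$, because the $\{e\}$- and $C_p$-quotients have nontrivial Jacobians and Definition \ref{brauer_verifiability_definition} allows no cancellation beyond genus-$0$ factors.

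The same structural problem sinks your case (4): $m[C_m]+[C_p]-[\{e\}]-m[G]$ is indeed an honest Brauer relation for $C_p\rtimes C_m$, but on ${}_K(E_M)$ the quotients by $\{e\}$ and $C_p$ are ${}_K(E_M)$ and ${}_K(E'_M)$, whose Jacobians are the full Weil restrictions $\textup{Res}_{M/K}E_M$ and $\textup{Res}_{M/K}E'_M$; so your relation verifies an isogeny $E^m\times\textup{Res}_{M/K}E'_M\to\textup{Res}_{M/K}E_M\times(E')^m$, not the $p$-isogeny $E\to E'$ demanded by the statement. Isolating $E\to E'$ is precisely why the paper needs the genuinely pseudo (non-Brauer) relation $\Theta=C_m-G$, whose validity is the computation \eqref{morally-zeroness} that $\mathbb{C}[G/C_m]\ominus\mathbf{1}$ shares no constituents with $V_\ell$ -- a step your sketch gestures at in general but does not supply for the relation you actually chose. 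Finally, case (2) as you describe it is not a proof: the ``2-torsion cover of $\mathbb{P}^1$'' with group inside $C_2\wr S_3$ does not have $\tilde Z_d$ among its quotients, whereas the paper's key construction is the Richelot correspondence $\Gamma_d\subseteq Z\times\tilde Z_d$, exhibited as the $D_8$-Galois closure of $Z\to\mathbb{P}^1_x\to\mathbb{P}^1_T$ for an explicitly constructed rational function $T$, with $Z$, $\tilde Z_d$ and two genus-$0$ curves as the relevant quotients (Proposition \ref{automorphisms_on_richelot}, Theorem \ref{pseudo_brauer_richelot}); your proposed extra argument about Weil-pairing isotropy is also unnecessary, since verifiability only asks for the quotient curves and the relation, the existence and polarisation of the Richelot isogeny being classical.
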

The isogenies listed in Theorem \ref{pseudo-Brauer-verifiability} are classically well-known. The key insight is that one can deduce their existence in a uniform way using a suitable pseudo Brauer relation.
\begin{remark} \label{remark:curves_aut_groups}
Writing $\phi:A \to A'$ to denote an isogeny in each of the cases listed in Theorem \ref{pseudo-Brauer-verifiability}, the following table records the pseudo Brauer relation $\Theta_{\phi}$ and the corresponding automorphism group $G_{\phi}$. For details regarding the curve $X_{\phi}$ being acted upon by $G_{\phi}$, see the relevant theorem.
$$\begin{array}{| c||c|c|c|}
\hline
\textup{Case} & G_{\phi} &\Theta_{\phi} & \textup{Relevant theorem}  \\
\hline
(1)& C_2 \times C_2 & \{e\} + 2(C_2\!\!\times\!\! C_2)  - C_2^a - C_2^b - C_2^c & \ref{pseudo_brauer_KT}\\
\hline
(2)&  D_{8} & C_2^a - (C_2^2)^a + C_2^b-(C_2^2)^b & \ref{pseudo_brauer_richelot}\\
\hline
(3)&  S_4 & C_2^2-D_8-S_3+S_4 & \ref{pseudo_brauer_verifiability_prym}\\
\hline
(4)&  C_p \rtimes C_m & C_m - (C_p \rtimes C_m) & \ref{pseudo_brauer_verifiability_EC}\\
\hline
(5)& S_{p} \times C_{2} & (S_{p-1}\times \{e\}) - (S_{p}\times \{e\}) - (S_{p-2}\times C_2) & \ref{complementary_elliptic_curve}\\
\hline
\end{array}$$
\smallskip
\end{remark}

\begin{remark} \label{brauer_verfiaibility}
We note that a Brauer relation for a group $G$ is automatically a pseudo Brauer relation for an automorphism group $G$ acting on a curve $X/K$. We note that the pseudo Brauer relations $\Theta_{\phi}$ in the cases listed (1)-(3) in Theorem \ref{pseudo-Brauer-verifiability} are Brauer relations for the underlying group $G_{\phi}$.\end{remark}

Adding on, the theory of {regulator constants} utilises isogenies induced from pseudo Brauer relations to provide local expressions for the parity of certain Selmer ranks. To be precise, in view of \cite[Corollary 7.3]{DGKM}, recalled in Theorem \ref{regulator_constant_rank_parity} below, to each pseudo Brauer relation $\Theta$ for $G$ and $X$, we can associate an expression of the form

\[ \sum_{\rho \in S_{\Theta,p}} m_{\rho} \equiv  (\textup{explicit local invariants}) \mod 2.\]
In this expression, $S_{\Theta,p}$ denotes a special set consisting of $\mathbb{Q}_p[G]$-representations (Definition \ref{def_special_subset}), while $m_{\rho}$ denotes the multiplicities of these in $\mathcal{X}_p(\textup{Jac}_{X})$, the dual $p^{\infty}$-Selmer group of $\textup{Jac}_{X}$. This specifies a mechanism for deriving local expressions. One of our main results is deduced from the application of this in various settings.

\begin{theorem}[Theorems \ref{local_expression_KT}, \ref{local_formula_Richelot}, \ref{prym_local_formula}, \ref{local_formula_EC}, \ref{Thm:local_formula_genus2_split}] \label{Thm:recoverable_formulae}
Let $A$ be one of the abelian varieties listed in Theorem \ref{pseudo-Brauer-verifiability}. Then, by applying Theorem \ref{regulator_constant_rank_parity} with $p=2$ in cases (1)-(3), and  with any rational prime $p$ in cases (4)-(5), we acquire
\[\textup{rk}_{p}(A) = \sum_{v \ \textup{place of} \ K} \textup{ord}_{p} \ \Lambda_{\Theta_{\phi}}(X_{\phi}/K_{v}) \mod 2, \ \]
where $\Theta_{\phi}$ is the pseudo Brauer relation given in Remark \ref{remark:curves_aut_groups}, while $\Lambda_{\Theta}(X_{\phi}/K_{v})$ is a {local} invariant as in Definition \ref{def:lemma:local_invariants}.
\end{theorem}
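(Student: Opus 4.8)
\noindent\textit{Proof strategy.} The plan is uniform across the five cases: feed the datum $(G_\phi,X_\phi,\Theta_\phi)$ recorded in Remark~\ref{remark:curves_aut_groups} into Theorem~\ref{regulator_constant_rank_parity}, and then reduce the theorem to a case-by-case representation-theoretic identity. Fix a case, abbreviate $G=G_\phi$, $X=X_\phi$, $\Theta=\Theta_\phi$, and take $p=2$ in cases (1)-(3), an arbitrary rational prime $p$ in cases (4)-(5). By Theorem~\ref{pseudo-Brauer-verifiability}, $\Theta$ is a pseudo Brauer relation for the action of $G$ on $X/K$, so Theorem~\ref{regulator_constant_rank_parity} applies and yields
\[
\sum_{\rho\in S_{\Theta,p}}m_\rho\ \equiv\ \sum_{v\ \textup{place of}\ K}\textup{ord}_p\,\Lambda_{\Theta}(X/K_v)\mod 2,
\]
where $m_\rho$ is the multiplicity of the $\mathbb{Q}_p[G]$-irreducible $\rho$ in $\mathcal{X}_p(\Jac_X)\otimes\mathbb{Q}_p$ and $S_{\Theta,p}$ is the special set of Definition~\ref{def_special_subset}. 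The right-hand side is exactly the local expression in the statement, so the whole theorem reduces to the parity identity
\[
\textup{rk}_p(A)\ \equiv\ \sum_{\rho\in S_{\Theta,p}}m_\rho\mod 2.
\]

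To prove this I would argue that both sides are read off the same $\mathbb{Q}_p[G]$-module $\mathcal{X}_p(\Jac_X)\otimes\mathbb{Q}_p\cong\bigoplus_\rho\rho^{\oplus m_\rho}$. In each case $A$ (and $A'$) is $K$-isogenous to a product of quotient Jacobians $\Jac_{X/H}$ — in case (1) after the Weil restriction $\Res_{K(\sqrt d)/K}$, which one encodes by the relevant induced module — and since $\textup{rk}_p$ is an isogeny invariant, the isogeny decomposition of $\Jac_X$ coming from $\Theta$ and its refinements expresses $\textup{rk}_p(A)=\sum_{\rho\in T'}m_\rho\dim\rho$ for an explicit set $T'$ of rational irreducibles, namely those occurring in the $G$-subrepresentation of $\mathcal{X}_p(\Jac_X)\otimes\mathbb{Q}_p$ cut out by $A$. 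Reducing modulo $2$, discarding the $\rho$ of even dimension, and using that a symplectic $\rho$ has even multiplicity $m_\rho$ (orthogonality of the Cassels--Tate pairing on Selmer), one obtains $\textup{rk}_p(A)\equiv\sum_{\rho\in T}m_\rho\mod 2$ for a set $T$ of odd-dimensional orthogonal rational irreducibles. So it suffices to prove $T=S_{\Theta,p}$ up to irreducibles whose multiplicity in $\mathcal{X}_p(\Jac_X)$ is even.

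This last point is the substance of the five theorems quoted, and it is carried out concretely: for each rational irreducible $\rho$ of $G$ one evaluates the regulator constant $\mathcal{C}_{\Theta}(\rho)$ and records whether it lies in $(\mathbb{Q}_p^\times)^2$, the non-square ones forming $S_{\Theta,p}$, and one checks that these are precisely the $\rho$ occurring in $A$. Thus for $G=C_2\times C_2$ (case (1)) one finds $S_{\Theta,2}$ to be the two quadratic characters non-trivial on the copy of $C_2$ cutting out $K(\sqrt d)$, so that $\sum_{\rho\in S_{\Theta,2}}m_\rho=\textup{rk}_2\Res_{K(\sqrt d)/K}\Jac_Y=\textup{rk}_2(\Jac_Y\times\Jac_{Y^d})$; for $D_8$ and $S_4$ (cases (2)-(3)) one finds $S_{\Theta,2}$ to be the faithful $2$-dimensional irreducible of $D_8$, respectively a suitable irreducible of $S_4$, which carry the Richelot isogeny, respectively $\Jac_Z$ together with $\textup{Prym}(\tilde Z/Z)$; and for $C_p\rtimes C_m$ and $S_p\times C_2$ (cases (4)-(5)) one finds $S_{\Theta,p}$ to consist of the faithful higher-dimensional irreducibles of $C_p\rtimes C_m$, respectively the standard representation of $S_p$ possibly twisted by the character of $C_2$, whose combined multiplicity in $\mathcal{X}_p(\Jac_X)$ recovers $\textup{rk}_p$ of the isogenous elliptic curve, respectively of $E\times E'$. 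The main obstacle is this matching in cases (4) and (5): there $\Theta$ is genuinely only a \emph{pseudo} Brauer relation (for instance in case (5) it has nonzero virtual dimension $-(p-1)(p-2)$ when $p\geq 3$), so the regulator-constant bookkeeping cannot be reduced to ordinary Brauer induction, and one must control — uniformly in $p$ — how the ``pseudo'' correction interacts both with the local invariants $\Lambda_{\Theta}(X/K_v)$ and with the definition of $S_{\Theta,p}$, while simultaneously checking that no Schur-index phenomenon over $\mathbb{Q}_p$, and no irreducible of even multiplicity, disturbs the parity count.
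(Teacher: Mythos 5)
Your overall strategy is the paper's: feed $(G_{\phi},X_{\phi},\Theta_{\phi})$ into Theorem \ref{regulator_constant_rank_parity} and reduce to the case-by-case identity $\textup{rk}_{p}(A)\equiv\sum_{\rho\in S_{\Theta,p}}\langle\rho,\mathcal{X}_{p}(\textup{Jac}_{X_{\phi}})\rangle \bmod 2$, proved by computing regulator constants and multiplicities. However, your execution of the matching — which you yourself identify as the substance of the five quoted theorems — is wrong exactly where the work lies. In case (4) the special set is \emph{not} the set of faithful $m$-dimensional irreducibles of $C_{p}\rtimes C_{m}$: these do not occur in $\mathcal{X}_{p}$ at all, because the $C_{p}$-part acts by translations and hence trivially on $\textup{Pic}^{0}$; indeed $V_{\ell}(\textup{Res}_{M/K}E_{M})\cong(\textup{Ind}_{C_{p}}^{G}\mathbf{1})^{\oplus 2}$ (Proposition \ref{prop:tatemodule}), a representation on which $C_{p}$ acts trivially, so by Definition \ref{def_special_subset} the faithful irreducibles cannot lie in $S_{\Theta,p}$. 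The correct statement is $S_{\Theta,p}=\{\mathbf{1}\}$ with $\mathcal{C}_{\Theta}(\mathbf{1})\equiv p$, and the multiplicity of $\mathbf{1}$ is $\textup{rk}_{p}(E')=\textup{rk}_{p}(E)$. Similarly in case (5) the correct set is $\{\epsilon,\sigma\otimes\epsilon\}$ (Lemma \ref{regulator_constants_non-morally-zero-reps}); your ``standard representation possibly twisted by the $C_{2}$-character'' omits $\epsilon$, which is the constituent carrying $\textup{rk}_{p}(E)$ (Lemma \ref{multiplicities_in_selmer}), and includes $\sigma$, whose multiplicity in $\mathcal{X}_{p}$ is zero. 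With the sets you name, the parity count does not equal $\textup{rk}_{p}(A)$, so the reduction you set up is not closed.

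Your intermediate ``reduction'' is also not the right mechanism. You write $\textup{rk}_{p}(A)=\sum m_{\rho}\dim\rho$ over constituents ``cut out by $A$'' and then discard even-dimensional $\rho$ modulo $2$ and invoke even multiplicity of symplectic constituents; but the bookkeeping actually needed (and used in the paper) is Frobenius reciprocity, $\textup{rk}_{p}\,\textup{Jac}_{X/H}=\dim\mathcal{X}_{p}^{H}=\sum_{\rho}m_{\rho}\dim\rho^{H}$, so the weight is $\dim\rho^{H}$ (typically $1$), not $\dim\rho$. As stated, your rule would discard the $2$-dimensional irreducible $\rho$ of $D_{8}$ in case (2) and $\tau$ of $S_{4}$ in case (3) — precisely the representations whose multiplicities equal $\textup{rk}_{2}\,\textup{Jac}_{Z}$ (Lemmas \ref{rep_strucutre} and \ref{lemma_multiplicities}) — and no Cassels--Tate/symplectic argument occurs or is needed anywhere in the paper's proofs. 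Finally, you never verify the hypotheses under which Theorem \ref{regulator_constant_rank_parity} and Definition \ref{def:lemma:local_invariants} apply (self-duality, and realisability over $\mathbb{Q}$, of $\Omega^{1}(\textup{Jac}_{X_{\phi}})$): this is immediate in cases (1)--(3) and (5), but in case (4) it is a genuine step, handled in the paper by Lemma \ref{regular_represenation_omega_1} and Proposition \ref{prop:tatemodule}.
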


We note that Theorem \ref{Thm:recoverable_formulae} recovers local expressions found in the existing literature. In particular, the corresponding local expressions can be found in the works of Kramer \cite{MR597871} and Morgan \cite{morgan} for (1), V. Dokchitser--Maistret \cite{DM2019} for (2), Docking \cite{docking20212} for (3), Cassels (found in \cite[Appendix]{MR2167089}) for (4), Green--Maistret \cite{HollyCeline} for $p=2$ and Coates--Fukaya--Kato--Sujatha \cite{CFKS} for odd $p$ for (5). To our best knowledge, these are all known instances of local expressions in the case of Jacobians.

\subsection{Brauer relations and isogenies}
\label{subsec_isogenies_brauer}
The work of Kani and Rosen \cite{MR1000113} concerns the study of idempotent relations in the algebra of rational endomorphisms of arbitrary abelian varieties. Their central result in the setting of automorphisms on Jacobians proves that Brauer relations induce isogenies between Jacobians as in \eqref{isogeny_from_relation}. The usefulness of this theorem rests ultimately on one's ability to deduce the existence of an isogeny by presenting an explicit Brauer relation. As a result, applications of this work are widespread across the existing literature, see for example \cite{Bruin-Flynn}, \cite{MR1779608}, \cite{Paulhus} and \cite{Yu}. 

One noteworthy application includes Edixhoven's work  \cite{Edixhoven-verification} which presents an explicit Brauer relation for $\textup{GL}_{2}(\mathbb{F}_{p})$ in order to \textit{verify} the existence of an isogeny between Jacobians of certain modular curves. The existence of this isogeny was originally deduced in Chen's work by utilising the Selberg trace formula, see \cite[Theorem 1]{Chen-isogeny}. This provides the main stimulus for the contents of Theorem \ref{pseudo-Brauer-verifiability} mentioned above which asserts that a range of classical isogenies are pseudo Brauer verifiable in the following sense.

\begin{definition} \label{brauer_verifiability_definition} Let $Y_1,\dots,Y_m$ and $Y_1',\dots,Y_r'$ be curves defined over a number field $K$. In addition, suppose that there exists a $K$-rational isogeny $ \prod_{i=1}^{m} \textup{Jac}_{Y_{i}} \to \prod_{j=1}^{r} \textup{Jac}_{Y_{j}'}$. We say that a pseudo Brauer relation $\Theta$ for $G$ and a curve $X$ \textit{verifies an isogeny $\prod_{i=1}^{m} \textup{Jac}_{Y_{i}} \to \prod_{j=1}^{r} \textup{Jac}_{Y_{j}'}$} if we can find subgroups $H_1,\dots,H_m$ and $H_1',\dots,H_r'$ of $G$ such that
\begin{enumerate}
\item the quotient curves satisfy $X/H_i \cong Y_i$ and $X/H_j' \cong Y_j'$ for all $i$ and $j$,
\item and $\Theta = \sum_{i=1}^{m} H_{i} - \sum_{j=1}^{r} H_{j}'$ is the underlying pseudo Brauer relation.
\end{enumerate}
 In addition, we say that an isogeny $\prod_{i=1}^{m} \textup{Jac}_{Y_i} \to \prod_{j=1}^{r} \textup{Jac}_{Y_j'}$ is \textit{pseudo Brauer verifiable} if we can find a group $G$ acting on a curve $X$ and a pseudo Brauer relation $\Theta$ for these such that conditions (1)-(2) are satisfied.\end{definition}
  
\begin{remark} 
We note that the concept of pseudo Brauer verifiability is primarily focused on verifying the existence of \textit{some} isogeny between $\prod_{i} \textup{Jac}_{X/H_i}$ and $\prod_{j} \textup{Jac}_{X/H_j'}$. We deliberately refrain from making any specific references to an explicit morphism $\prod_{i} \textup{Jac}_{X/H_i} \to \prod_{j} \textup{Jac}_{X/H_j'}$ in this definition.
\end{remark}

It would be interesting to know whether \textit{any} isogeny between Jacobians is pseudo Brauer verifiable. A slightly more ambitious problem would be deriving similar results in the case of general abelian varieties. A result in this direction can be found in the work of Yu \cite[Corollary 10]{Yu}, where the author deduces an isogeny between $\textup{Res}_{F/K}A$ for $F/K$ an abelian extension and $\prod_{\chi \in \textup{Gal}(F/K)} A^{\chi}$, a product of its twists. This is done by using {twisted idempotent relations}, his generalisation of the Kani-Rosen relations. The underlying isogeny in this case is classically well-known and can be found in \cite{Kida}. 

We end this subsection with various examples of pseudo Brauer verifiable isogenies.

 \begin{example}[Modular Curves] Let $X(p)/\mathbb{Q}$ be the modular curve classifying elliptic curves with full level $p$ structure equipped with its usual action by $\textup{GL}_2(\mathbb{F}_{p})$. As already discussed, the main result presented in \cite{Edixhoven-verification} proves that the isogeny $\textup{Jac}_{X_{\textup{sp}}^{+}(p)} \to \textup{Jac}_{X_{0}(p)} \times  \textup{Jac}_{X_{\textup{nsp}}^{+}(p)}$ between certain modular Jacobians can be deduced from a Brauer relation for $\textup{PGL}_2(\mathbb{F}_{p})$. As a result, the underlying isogeny is pseudo Brauer verifiable.
 
 \end{example}
 
 \begin{example}[Fermat Curves, cf. \cite{MR1000113} \S 5] Let $X_m$ be the Fermat curve with affine model $x^m+y^m=1$. Let $K$ be any field containing $\zeta_p$ where $p|m$. Then, $C_p \!\times\! C_p$ acts on $X_m$ via $\sigma(x,y)=(\zeta_p x,y)$, while $\tau(x,y)=(x,\zeta_p y)$. Let $\{C_p^{i}\}_{i=0}^{p}$ denote all proper subgroups of $C_p\!\times\!C_p$, and write $X_i{'}$ to denote the (normalisation of) the curve with affine model  $ y_i^{m}=x_i^{im/p}(1-x_i^{m/p}). $ Then, the Brauer relation $\{e\} + p(C_p \!\times\! C_p) - \sum_{i=0}^{p} C_p^{i}$ for $C_p \!\times\! C_p$ and $X_m$ verifies an isogeny
 $ \textup{Jac}_{X_m} \times \textup{Jac}_{X_{m/p}}^{p} \hspace{-2mm} \to \prod_{i=0}^{p} \textup{Jac}_{X_i'}.$
 \end{example}
 
 \begin{example}[Hyperelliptic curve with extra involution] \label{extra_involution_example}
Let $X/K$ be a hyperelliptic curve which admits an affine model $y^2=f(x^2)$ for $f$ a square-free polynomial.
 The morphisms
 \begin{alignat*}{2} \phi_1: X &\to Y_1 =\{y_1^2=f(x_1)\}, \ \ \ \ \phi_2: X & &\to Y_2=\{y_2^2=x_2f(x_2)\}  \\
 (x,y) &\mapsto (x^2,y), \ \ \ \hspace{20mm} (x,y)& &\mapsto(x^2,xy)
\end{alignat*}
induce an isogeny $(\phi_1)^{*}+(\phi_2)^{*}:\textup{Jac}_{Y_1} \times \textup{Jac}_{Y_2} \to \textup{Jac}_{X}.$ By considering the $C_2 \times C_2$-action on $X/K$ given by separately negating $x$ and $y$, we acquire:
\begin{figure}[h!]
\centering\begin{tikzpicture}
    \node (Q1) at (0,0) {$K(x^2)$};
    \node (Q2) at (0,1.2) {$K(x)$};
    \node (Q3) at (-3, 1.2) {$K(Y_1) = K(x^2,y)$};
    \node (Q4) at (0, 2.4) {$K(X) = K(x,y)$};
	\node (Q5) at (3, 1.2) {$K(Y_2) = K(x^2, xy)$};
	    \draw (Q4)--(Q3);
	    \draw (Q4)--(Q5);
    \draw (Q4)--(Q2);
    \draw (Q2)--(Q1);
    \draw (Q3)--(Q1);
    \draw (Q5)--(Q1);
        \draw (Q5)--(Q1);
            \draw (Q5)--(Q1);
    \end{tikzpicture}\end{figure}
    \\
Since $\Xi=\{e\}+2(C_2\times C_2) - C_2^a-C_2^b-C_2^c$ is a Brauer relation for $C_2 \times C_2$, we deduce that the underlying isogeny is pseudo Brauer verifiable.
 \end{example}
 
 \begin{example}[Elliptic curve with $K$-rational $p$-torsion] \label{p-isogeny} Let $E$  be an elliptic curve with affine model $y^2=f(x)$ which has a $K$-rational $p$-torsion point $P$. Then, there exists an isogeny $\phi: E\to E' = \{v^2=g(u)\}$ with $\textup{Ker}(\phi) = \langle P \rangle$. By exploiting the $D_{2p}$-action on $E$ given by the translation-by-$P$ automorphism and the hyperelliptic involution, we acquire:
 \begin{figure}[h!]
\begin{center}\begin{tikzpicture}
    \node (Q1) at (0,0) {$K(u)$};
    \node (Q2) at (-1.5,1.3) {$K(E') = K(u,v)$};
    \node (Q3) at (+1.5,1.7) {$K(x)$};
    \node (Q4) at (0,3) {$K(E) = K(x,y)$};
    \draw (Q4)--(Q2);
    \draw (Q4)--(Q3);
    \draw (Q3)--(Q1);
    \draw (Q2)--(Q1);
    \end{tikzpicture} \end{center}\end{figure}
 \smallskip
 
In view of the Brauer relation $\Psi=\{e\}+2D_{2p}-2C_{2}-C_p$ for $D_{2p}$, we deduce that the $p$-isogeny $E \to E'$ is pseudo Brauer verifiable.
 
  \end{example}

\subsection{Local expressions, regulator constants and the parity conjecture} \label{subsect:known_local_formulae}

In addition, this paper addresses the systematic derivation of local rank parity expressions for Jacobians. The existence of such expressions is expected in view of the parity conjecture. This conjecture is based on expectations made by the Birch and Swinnerton-Dyer conjecture combined with the theoretical framework of $L$-functions.

\begin{conjecture}[Parity conjecture] For an abelian variety $A/K$ defined over a number field,
\[ \textup{rk}(A/K) \equiv \sum_{v \ \textup{place of} \ K} \mu(A/K_{v}) \mod 2,\] where $(-1)^{\mu(A/K_v)} = w(A/K_v)$, and $w(A/K_v)$ denotes the local root number of $A$ at a place $v$.
\end{conjecture}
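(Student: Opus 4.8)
\subsection*{Proof proposal}

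The plan is to prove the conjecture by transferring the statement from the Mordell--Weil rank to the $p^\infty$-Selmer rank, where the regulator-constant machinery of this paper applies, and then reducing the resulting congruence to a purely local root-number computation. Write $\textup{rk}_p(A/K)$ for the $\mathbb{Z}_p$-rank of $\mathcal{X}_p(A)$, the Pontryagin dual of the $p^\infty$-Selmer group, and recall the exact relation
\[ \textup{rk}_p(A/K) = \textup{rk}(A/K) + \textup{corank}_{\mathbb{Z}_p}\, \textup{Sha}(A/K)[p^\infty]. \]
The first step is to show that the correction term is \emph{even}, so that $\textup{rk}_p(A/K) \equiv \textup{rk}(A/K) \pmod 2$. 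For a principally polarized $A$ this is classical: the Cassels--Tate pairing restricts to a non-degenerate alternating pairing on the divisible part $\textup{Sha}(A/K)[p^\infty]_{\textup{div}} \cong (\mathbb{Q}_p/\mathbb{Z}_p)^{c_p}$, forcing $c_p$ to be even (with the usual separate care at $p=2$). Since all cases of interest here---elliptic curves and Jacobians---are principally polarized, this reduces the full conjecture for those $A$ to the $p$-parity statement $\textup{rk}_p(A/K) \equiv \sum_v \mu(A/K_v) \pmod 2$ for a single conveniently chosen prime $p$; for a general abelian variety the evenness of $c_p$ is itself not known unconditionally and constitutes a first, separate obstacle.

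Next I would realize $A$ arithmetically inside a Galois cover of curves so that the $p$-parity statement becomes accessible to regulator constants. Concretely, the aim is to produce a finite group $G$ acting on a curve $X/K$, a pseudo Brauer relation $\Theta$, and a self-dual $\mathbb{Q}_p[G]$-representation $\rho \in S_{\Theta,p}$ whose multiplicity $m_\rho$ in $\mathcal{X}_p(\textup{Jac}_X)$ records $\textup{rk}_p(A/K) \bmod 2$---the situations of Theorem \ref{pseudo-Brauer-verifiability} being the prototypes where such a presentation exists. Applying Theorem \ref{regulator_constant_rank_parity} then delivers
\[ \textup{rk}_p(A/K) \equiv \sum_{\rho \in S_{\Theta,p}} m_\rho \equiv \sum_{v} \textup{ord}_p\, \Lambda_{\Theta}(X/K_v) \pmod 2. \]
The remaining, purely local, task is to match each term by proving $\textup{ord}_p\, \Lambda_{\Theta}(X/K_v) \equiv \mu(A/K_v) \pmod 2$ at every place $v$; one computes the regulator constant of $\Theta$ against the local Tamagawa and root-number data at $v$, exactly as in the case-by-case verifications recovering Kramer--Tunnell, Dokchitser--Maistret, Docking and Coates--Fukaya--Kato--Sujatha in Theorem \ref{Thm:recoverable_formulae}. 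Summing over all $v$ then yields $\textup{rk}(A/K) \equiv \sum_v \mu(A/K_v) \pmod 2$, as required.

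The main obstacle is the realization step. For a \emph{general} abelian variety there is no known way to embed $A$ into a cover of curves carrying a Brauer relation with the correct regulator constant, so the method establishes $p$-parity only for the structured self-dual families produced by the constructions above. Outside these families the $p$-parity conjecture---and hence the parity conjecture as stated---remains genuinely open, and one would need either a substantially more flexible source of relations or an entirely different input: for instance the meromorphic continuation and functional equation of $L(A/K,s)$, which make $(-1)^{\sum_v \mu(A/K_v)} = w(A/K)$ the sign of the functional equation, together with the rank part of Birch--Swinnerton-Dyer, both themselves open in general. Even within the accessible families the technical heart is the local comparison $\textup{ord}_p\,\Lambda_{\Theta}(X/K_v) \equiv \mu(A/K_v)$, where the arithmetic of bad reduction together with the behaviour at $p=2$ and at the archimedean places concentrates the computational difficulty.
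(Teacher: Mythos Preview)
The statement you are attempting to prove is labelled as a \emph{Conjecture} in the paper, and the paper makes no attempt to prove it. It is stated purely as motivation: the authors explain that the existence of local rank-parity expressions is \emph{expected} in view of the parity conjecture, and then go on to establish such expressions for $p^\infty$-Selmer ranks via regulator constants. There is therefore no ``paper's own proof'' to compare your proposal against.

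Your write-up is not really a proof proposal but a survey of the known strategy together with its limitations, and on that level it is broadly accurate. You yourself identify the two genuine gaps that make the conjecture open: (i) for a general abelian variety there is no known mechanism to produce a cover of curves and a pseudo Brauer relation whose regulator constant captures $\textup{rk}_p(A/K)$, and (ii) even when such a relation exists, the local identity $\textup{ord}_p\,\Lambda_\Theta(X/K_v)\equiv \mu(A/K_v)\pmod 2$ is a separate, case-by-case computation which the present paper does \emph{not} carry out---it only produces the $\Lambda_\Theta$ side. One further point worth flagging: your reduction from $\textup{rk}$ to $\textup{rk}_p$ requires the $p$-primary part of $\Sha$ to have finite (hence even) corank, which is itself not known in general; the usual formulation of the $p$-parity conjecture builds this in by working with $\textup{rk}_p$ from the outset. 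In short, your outline is a fair description of how the parity conjecture is attacked in the accessible cases, but it is not---and cannot currently be---a proof of the conjecture as stated.
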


Such global-to-local expressions have established themselves as highly valuable. Their usefulness stems from the relative ease with which local invariants can be computed, a stark contrast to the formidable challenge posed by global invariants such as the rank of an abelian variety. As a result, they prove to be a crucial step in many parity-related proofs, see for example \cite{CFKS}, \cite{tamroot}, \cite{DM2019} and \cite{KT}.

A notable instance of a global-to-local formula appears in the work of Mazur--Rubin \cite{MazurRubin} in the context of base changing elliptic curves to $D_{2p}$-extensions. The primary  impetus behind the development of Theorem \ref{Thm:recoverable_formulae} can be traced back to a result in \cite{MR2680426}. Within its pages, the authors employ their theory of regulator constants to give a local expression for the same rank parity as the formula appearing in Mazur--Rubin's work. This theory was subsequently extended in \cite{DGKM}, where the new framework allows for a broader scope by permitting the use of subgroups of $K$-automorphisms on curves and pseudo Brauer relations. We now recall the main result of this advancement.

\begin{notation} We write $\langle \cdot,\cdot \rangle$ for the standard inner product of complex characters of $G$, and we fix an arbitrary embedding $\mathbb{Q}_p \subseteq \mathbb{C}$ for the purpose of comparing characters. We write $\mathcal{X}_{p}(\textup{Jac}_{X})$ for the dual $p^{\infty}$-Selmer group of $\textup{Jac}_{X}$ and $\textup{rk}_{p}(\textup{Jac}_{X})$ for its $p^{\infty}$-Selmer rank. 
\end{notation}

\begin{theorem}[\cite{DGKM} Corollary 7.3] \label{regulator_constant_rank_parity}
Let $\Theta$ be a pseudo Brauer relation for $G$ and $X/K$, and in addition suppose that $\Omega^{1}(\textup{Jac}_{X})$ is self-dual as a $G$-representation. Then, for each rational prime $p$, 
\begin{equation*} \sum_{\tau \in S_{\Theta,p}} \langle \tau , \mathcal{X}_{p}(\textup{Jac}_{X}) \rangle  \equiv  \sum_{v \ \textup{place of} \ K} \textup{ord}_{p} \ \Lambda_{\Theta}(X/K_{v}) \mod 2,
\end{equation*} 
where $\Lambda_{\Theta}(X/K_{v})$ are {local} invariants as in Definition \ref{def:lemma:local_invariants} and $S_{\Theta,p}$ as in Definition \ref{def_special_subset}.
\end{theorem}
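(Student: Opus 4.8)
The plan is to deduce the statement from the regulator-constant machinery of T.\ and V.\ Dokchitser \cite{MR2680426}, in the form extended to curves and pseudo Brauer relations in \cite{DGKM}, by reducing the claimed congruence to a single global-to-local identity for the regulator constant $\mathcal{C}_{\Theta}$ attached to $\Theta$. First I would use the hypothesis on $\Omega^{1}(\textup{Jac}_{X})$: together with the Cassels--Tate pairing on the $p^{\infty}$-Selmer group it makes $\mathcal{X}_{p}(\textup{Jac}_{X}) \otimes_{\mathbb{Z}_{p}} \mathbb{Q}_{p}$ a self-dual $\mathbb{Q}_{p}[G]$-module carrying a nondegenerate, $G$-invariant, $\mathbb{Q}$-rational quadratic form, so that the regulator constant $\mathcal{C}_{\Theta}$ (used in Definition \ref{def_special_subset} to define $S_{\Theta,p}$) is defined on it and on each of its $\mathbb{Q}_{p}[G]$-constituents, and $S_{\Theta,p}$ is the set of self-dual constituents $\tau$ with $\textup{ord}_{p}\mathcal{C}_{\Theta}(\tau)$ odd.

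Decomposing $\mathcal{X}_{p}(\textup{Jac}_{X}) \otimes \mathbb{Q}_{p}$ into isotypic components, invoking multiplicativity of regulator constants, and pairing off the non-self-dual constituents (which occur with equal multiplicity, and whose dual pairs contribute a square to $\mathcal{C}_{\Theta}$), I obtain
\[ \sum_{\tau \in S_{\Theta,p}} \langle \tau, \mathcal{X}_{p}(\textup{Jac}_{X}) \rangle \equiv \textup{ord}_{p}\,\mathcal{C}_{\Theta}\bigl( \mathcal{X}_{p}(\textup{Jac}_{X}) \otimes \mathbb{Q}_{p} \bigr) \mod 2, \]
so everything reduces to showing $\textup{ord}_{p}\,\mathcal{C}_{\Theta}(\mathcal{X}_{p}(\textup{Jac}_{X}) \otimes \mathbb{Q}_{p}) \equiv \sum_{v} \textup{ord}_{p}\,\Lambda_{\Theta}(X/K_{v}) \mod 2$. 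For this I would unwind the definition of the regulator constant. Writing $\Theta = \sum_{i} H_{i} - \sum_{j} H_{j}'$, the invariants $(\mathcal{X}_{p}(\textup{Jac}_{X}) \otimes \mathbb{Q}_{p})^{H}$ are, up to isogeny, the Selmer groups $\mathcal{X}_{p}(\textup{Jac}_{X/H}) \otimes \mathbb{Q}_{p}$, so $\mathcal{C}_{\Theta}(\mathcal{X}_{p}(\textup{Jac}_{X}) \otimes \mathbb{Q}_{p})$ is, modulo rational squares, the ratio of the ``global Birch--Swinnerton-Dyer regulator terms'' of $\prod_{j} \textup{Jac}_{X/H_{j}'}$ and $\prod_{i} \textup{Jac}_{X/H_{i}}$. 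Since these two abelian varieties are isogenous via the isogeny \eqref{isogeny_from_relation} induced by $\Theta$, the isogeny-invariance of the Birch--Swinnerton-Dyer quotient modulo squares (again \cite{MR2680426}, adapted in \cite{DGKM}) rewrites that ratio as a product over the places $v$ of $K$ of local discrepancy factors, each built from local Tamagawa numbers, periods of Néron differentials, and the contribution of the local points; by Definition \ref{def:lemma:local_invariants} these are precisely the $\Lambda_{\Theta}(X/K_{v})$. All but finitely many are $p$-adic units, so $\textup{ord}_{p}$ of the product is the finite sum on the right.

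I expect the main obstacle to be carrying out this last step honestly in the ``pseudo'' setting. Because $\Theta$ is only a pseudo Brauer relation for a group of $K$-automorphisms it provides no idempotent identity in $\mathbb{Z}[G]$, only the isogeny \eqref{isogeny_from_relation}, so the behaviour of $p^{\infty}$-Selmer groups and of their Cassels--Tate pairings must be tracked directly through that isogeny and through the isogeny $\textup{Jac}_{X/H} \sim \textup{Jac}_{X}^{H}$; one has to check that the finite kernels and cokernels which appear never change the parity, and that the local duality bookkeeping --- Tate local duality, and the behaviour at places above $p$ and at places of bad reduction --- stays compatible with the $G$-action. It is exactly the self-duality hypothesis on $\Omega^{1}(\textup{Jac}_{X})$ that pushes all of these pairings into the self-dual category in which the regulator-constant formalism is available, and the end result is the statement packaged as \cite[Corollary 7.3]{DGKM}.
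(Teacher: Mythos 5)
You should note at the outset that the paper does not prove this statement at all: Theorem~\ref{regulator_constant_rank_parity} is imported verbatim as \cite[Corollary 7.3]{DGKM} and used as a black box, so the only meaningful comparison is with the argument in \cite{DGKM} (which builds on \cite{MR2680426}). Your outline does reproduce that strategy in broad strokes: self-duality of $\mathcal{X}_{p}(\textup{Jac}_{X})$ lets one pair off non-self-dual constituents and, by multiplicativity of regulator constants, identify $\sum_{\tau \in S_{\Theta,p}} \langle \tau, \mathcal{X}_{p}\rangle$ with $\textup{ord}_{p}\,\mathcal{C}_{\Theta}(\mathcal{X}_{p})$ modulo $2$; the global quantity is then compared with local data through the isogeny $\prod_{j}\textup{Jac}_{X/H_{j}'} \to \prod_{i}\textup{Jac}_{X/H_{i}}$ induced by $\Theta$, using $\mathcal{X}_{p}^{H} \cong \mathcal{X}_{p}(\textup{Jac}_{X/H})$.

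However, as a proof the proposal has genuine gaps, and they are exactly the points you defer. First, ``isogeny-invariance of the Birch--Swinnerton-Dyer quotient modulo squares'' is not available in the form you invoke it: the Tate--Shafarevich group is not known to be finite, so the unconditional argument must be a Cassels-type formula at the level of $p^{\infty}$-Selmer groups for the specific isogeny above, together with the independence of $\mathcal{C}_{\Theta}$ from the choice of pairing (one cannot simply use the height pairing, which may be degenerate on $\mathcal{X}_{p}$); establishing this Selmer-theoretic identity is the core of \cite[\S\S 5--7]{DGKM}, not a formal consequence of BSD bookkeeping. Second, the assertion that the resulting local factors ``are precisely the $\Lambda_{\Theta}(X/K_{v})$'' conceals the main technical content: Definition~\ref{def:lemma:local_invariants} includes the deficiency factors $\mu$ and the correction term built from $\mathcal{C}_{\Theta}(\Omega^{1}(\textup{Jac}_{X}))$ and a chosen-basis regulator constant, and matching global choices of exterior forms against N\'eron differentials place by place is precisely where the self-duality hypothesis on $\Omega^{1}$ is used. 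Relatedly, the self-duality of $\mathcal{X}_{p}$ as a $G$-representation is a separate theorem and does not follow from the $\Omega^{1}$ hypothesis plus the Cassels--Tate pairing in the way you suggest. Since all of these are the substance of the cited corollary, your sketch amounts to reducing the statement to \cite[Corollary 7.3]{DGKM} itself --- which is also what the paper does, but it leaves the decisive steps unestablished as a standalone proof.
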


\begin{remark}
The self-duality assumption on $\Omega^{1}$ is automatically satisfied when all representations of $G$ are self-dual or $K$ has a real place (see \cite[Lemma 5.1]{DGKM}).
\end{remark}

We conclude this section by demonstrating applications of this theorem in examples considered in \S\ref{subsec_isogenies_brauer}.

\begin{example} 
Looking at Example \ref{extra_involution_example}, we write $C_2^a$ and $C_2^b$ to denote the subgroups generated by $(x,y) \mapsto (-x,y)$ and $(x,y) \mapsto (-x,-y)$ respectively. We let $\epsilon_a$ and $\epsilon_b$ be the non-identity, irreducible characters of $C_2\times C_2 $ on which $C_2^a$ and $C_2^b$ act trivially. Then, the special subsets $S_{\Theta,p}$ in Examples \ref{extra_involution_example} and \ref{p-isogeny} are as follows.
$$\begin{array}{|c||c|c|c|c|}
\hline
\textup{Example} & G&\textup{Brauer relation}&p&S_{\Theta,p}\\
\hline
 \ref{extra_involution_example} & C_2\times C_2& \Xi= \{e\}+ 2(C_2\!\!\times\!\! C_2 ) - C_2^a - C_2^b - C_2^c &2&\{\epsilon_a, \epsilon_b\}\\
\hline
\ref{p-isogeny} & D_{2p}& \Psi= 2C_2 + C_p - 2D_{2p} - \{e\}&p& \{\mathbf{1}\}\\
\hline
\end{array}$$
By applying Theorem \ref{regulator_constant_rank_parity} to Example \ref{extra_involution_example} with respect to the pseudo Brauer relation $\Theta$ and $p=2$, we acquire the local formula
\begin{equation*} \textup{rk}_{2}(\textup{Jac}_{X}) = \sum_{v \ \textup{place of} \ K} \ \textup{ord}_{2} \  \Lambda_{\Xi}(X/K_v) \mod 2. \end{equation*}
In addition, by applying Theorem \ref{regulator_constant_rank_parity} to Example \ref{p-isogeny} with respect to the pseudo Brauer relation $\Psi$ and any prime $p$, we acquire the local formula
\[\textup{rk}_{p}(E/K) = \sum_{v \ \textup{place of} \ K} \ \textup{ord}_{p} \  \Lambda_{\Psi}(E/K_v) \mod 2. \]
\end{example}

\subsection{Outline} This paper is organised as follows. In $\S2$, we provide relevant background results. This includes a review of the definitions of pseudo Brauer relations, regulator constants and some well-known facts concerning monodromy. The remaining sections share a consistent structure. In each of these, we delve into the study of the well--known isogeny specified by the section's title. Within each section, we establish pseudo Brauer verifiability for the isogeny in consideration, and then we utilise the underlying pseudo Brauer relation to derive a local expression using regulator constants.
\subsection{General notation and conventions} \label{General_notation}
Throughout this paper, we adhere to the following convention.

\noindent {\bf{\em{Convention.}}}
For each rational prime $p$, we fix an embedding $\Q_p \hookrightarrow{} \mathbb{C}$ for the purposes of comparing characters.

\medskip
\begin{tabular}{p{0.135\textwidth}p{0.81\textwidth}}
$X/K$ & a curve over a field $K$ (which is smooth, proper, not necessarily connected, see \S \ref{notation_for_curves})\\
$A/K$ & an abelian variety over a field $K$\\
$\textup{Res}_{F/K}A$ & the Weil restriction  of $A$ from a field extension $F$ to $K$\\
$\JX$ & the Jacobian variety of $X$, the identity component of the relative Picard functor $X/K$
\\
$\mathcal{X}_{p}(A)$ & $\text{Hom}_{\mathbb{Z}_{p}}(\varinjlim \text{Sel}_{p^n}(A), \mathbb{Q}_{p}/\mathbb{Z}_{p})\otimes \mathbb{Q}_{p}$, the dual $p^{\infty}$-Selmer group of $A$\\
$\rk_{p}A$ & the $p^{\infty}$-Selmer rank of $A$, equivalently $\textup{dim}(\mathcal{X}_{p}(A))$\\
$V_\ell(A)$& the $\mathbb{Q}_{\ell}$ vector space of the $\ell$-adic Tate module for $A$\\
$\Omega^{1}(A)$ & the $K$-vector space of regular differentials on $A$\\
$|\cdot|_{K}$, $|\cdot|_{v}$ & the unique extension of the the normalised absolute value on a local field $K$ (resp. $K_v$) to $\overline K$ (resp. $\overline{K_v}$)\\
$c(A)$ & the local Tamagawa number of $A$ defined over a non-Archimedean local field $\mathcal{K}$. When $\mathcal{K}$ is the completion of $K$ at a finite place $v$, then we write $c_{v}(A)$ to signify this.\\
$C(A, w)$ & $c(A) \cdot |\omega/\omega^{0}|_{\mathcal{K}}$ when $\mathcal{K}$ is non-archimedean and $\omega^{0}$ is a N\'eron exterior form on $A$; $\int_{A(\mathcal{K})} |\omega|$ when $\mathcal{K}=\mathbb{R}$; $2^{\text{dim}(A)}\int_{A(\mathcal{K})} |\omega \wedge \bar{\omega}|$ when $\mathcal{K}=\mathbb{C}$; in all cases $\omega$ is an exterior form for $A/\mathcal{K}$ \\
$\mu(X)$ & encodes whether $X$ is deficient, see \cite[Definition 5.5]{DGKM}\\
$X/H$ & quotient of the curve $X$ by a group $H\leq \Aut_K(X)$\\
$\langle\cdot,\cdot\rangle$ & the inner product on characters of $G$-representations \\
$\rho^*$, $\rho^H$ & the dual, resp. $H$-invariant ($H\leq G$) vectors, of a $G$-representation $\rho$\\

$\Theta $ & a pseudo Brauer relation relative to a $G$-representation $\mathcal{V}$, see Definition \ref{def:pseudo Brauer_relations} \\
$\mathcal{C}_{\Theta}(\mathcal{V})$ & regulator constant for $\mathcal{V}$, see Definition \ref{def:regulator_constants_for_pseudo_brauer}\\
$S_{\Theta,p}$ & set of self-dual and irreducible $\mathbb{Q}_{p}[G]$-subrepresentations of $\mathcal{X}_{p}$ with $\mathcal{C}_{\Theta}(\mathcal{X}_{p}) \equiv p \mod \mathbb{Q}_{p}^{\times 2}$, see Definition \ref{def_special_subset} \\

$\Lambda_{\Theta}(X/{\mathcal{K}})$ & arithmetic local invariant corresponding to pseudo Brauer relation $\Theta$, see
 Definition \ref{def:lemma:local_invariants} (cf. \cite[Definition 6.16]{DGKM})\\
$ \mathbb{P}^{1}_{x}/K$ & $\mathbb{P}^{1}$ together with a choice of $x$ which generates its function field over $K$ \\
 $D_{2p}$& the dihedral group of order $2p$ \\
 $C_m$ & cyclic group of order $m$
\end{tabular}\medskip

We will usually write $K$ to denote a number field.

\subsection{Notation and convention for curves} \label{notation_for_curves}
 Throughout this paper, a curve $X/K$ is defined to be a $K$-variety which is pure of dimension $1$ (equivalently, we suppose that all its irreducible components have dimension $1$).  We assume that all curves are smooth and proper but not necessarily connected (and neither are the connected components assumed to be geometrically connected). We refer the reader to \cite[Appendix A]{DGKM} for further details concerning these curves and their Jacobians.

We will call a curve $X/K$ \textit{nice} if it is connected, and in addition we say it is \textit{very nice} if its geometric components are defined over the base field $K$. Given a number field extension $L/K$ and a very nice curve $X/L$, we write ${}_{K}(X)$ to denote $X$ viewed as a scheme over $K$, i.e. the scheme $X$ equipped with the structure morphism $X \to \textup{Spec}(L) \to \textup{Spec}(K)$. 

\subsection*{Acknowledgements}  I would like to express my special thanks to Vladimir Dokchitser, not only for suggesting this project but also for his guidance and support which proved to be essential for the completion of this work. Thank you, Vladimir. I would also like to thank Adam Morgan for his work in laying the theoretical foundations for much of what will be presented here. Thank you, Adam.

\section{Background}
In this section, we review existing results on pseudo Brauer relations, their regulator constants, and the relevant local invariant $\Lambda_{\Theta}$ already established in \cite{DGKM}. To conclude, we present some well-known results on monodromy in the case of Galois covers of curves.
\subsection{Regulator constants, pseudo Brauer relations and isogenies}
 \label{subsect:regulator_constants}

\begin{notation} We write $L$ to denote any field of characteristic $0$ which admits an embedding into $\mathbb{C}$, $G$ is a finite group, and $\mathcal{V}$ is a self-dual $L[G]$-representation. 
\end{notation}

\begin{definition} \label{def:pseudo Brauer_relations}  Let $H_1,...,H_n$ and $H'_1,...,H_m'$ be (not necessarily distinct) subgroups of $G$.   We say that $\Theta = \sum_{i} H_{i} - \sum_{j} H_{j}'$ is a \textit{pseudo Brauer relation relative to $\mathcal{V}$} if there are $\mathbb{C}[G]$-representations $\rho_{1}$ and $\rho_{2}$, satisfying $\langle \rho_{1},  \mathcal{V}  \rangle  = \langle \rho_{2} , \mathcal{V} \rangle  =0$, such that
\begin{equation*}\label{eq:pseudo_brauer_equation_defining}
 \rho_1\oplus \bigoplus_{i} \textup{Ind}_{H_i}^{G} \mathbf{1} \cong \rho_2\oplus \bigoplus_{j} \textup{Ind}_{H_j'}^{G} \mathbf{1} . 
 \end{equation*}
\end{definition}
\begin{notation}\label{pairing_1_2_notat}
 Let $\Theta = \sum_{i}H_{i} - \sum_{j} H_{j}'$ be a pseudo Brauer relation relative to $\mathcal{V}$. Given a non-degenerate, $G$-invariant,  $\L$-bilinear pairing   $\Langle,\Rangle$  on  $\mathcal{V}$,   we denote by $\langle ,\rangle_{1}$ the  pairing
\[\langle ,\rangle_{1}=\bigoplus_i \frac{1}{|H_i|}\Langle,\Rangle\quad \textup{on the vector space}\quad\bigoplus_{i} \mathcal{V}^{H_{i}},\]
and define the pairing $\left \langle,\right \rangle_2$ on $\bigoplus_{j} \mathcal{V}^{H_{j}'}$ in an analogous way.  Given a basis $\mathcal{B}=\{v_i\}_i$  for $\bigoplus_{i} \mathcal{V}^{H_{i}}$, we  denote by $\langle \mathcal{B},\mathcal{B}\rangle_1$ the matrix  with $(i,j)^{th}$ entry $\langle v_{i},v_{j} \rangle_1 $, and define  $\langle \mathcal{B}',\mathcal{B}'\rangle_2$ for a basis $\mathcal{B}'$ of  $\bigoplus_{j} \mathcal{V}^{H_{j}'}$ similarly.
\end{notation}
\begin{definition} \label{def:regulator_constants_for_pseudo_brauer}
Let $\Theta=\sum_{i} H_{i} - \sum_{j} H_{j}'$  be a pseudo Brauer relation relative to $\mathcal{V}$, and let $\Langle,\Rangle$ be a non-degenerate, $G$-invariant,    $\L$-bilinear pairing on   $\mathcal{V}$ taking values in some field extension $\L'$ of $\L$.  Given bases $\mathcal{B}$ for $\bigoplus_{i} \mathcal{V}^{H_{i}}$ and $\mathcal{B}'$ for $\bigoplus_{j} \mathcal{V}^{H_{j}'}$, we define
\[ \mathcal{C}^{\mathcal{B},\mathcal{B}'}_{\Theta}(\mathcal{V}) =  \frac{ \text{det}\langle \mathcal{B},\mathcal{B} \rangle_{1}}{\text{det} \langle \mathcal{B}',\mathcal{B}' \rangle_{2}}\in \L'^\times. \] 
We then define the \textit{regulator constant of $\mathcal{V}$ relative to $\Theta$}, denoted $\mathcal{C}_{\Theta}(\mathcal{V}) $, to be the class of  $\mathcal{C}^{\mathcal{B},\mathcal{B}'}_{\Theta}(\mathcal{V}) $ in $\L'^{\times}/ \L^{\times 2}$ for any choice of bases $\mathcal{B}$, $\mathcal{B}'$.
\end{definition}

\begin{remark}As noted in \cite[Theorem 2.7]{DGKM}, the value of $\mathcal{C}_{\Theta}(\mathcal{V})$ is independent of the pairing $\Langle,\Rangle$ chosen on $\mathcal{V}$, is well-defined as an element in $L^{\times}/L^{\times 2}$ and is multiplicative in both $\Theta$ and $\mathcal{V}$.
\end{remark}

The present paper will only utilise pseudo Brauer relations relative to the $\ell$-adic Tate module $V_{\ell}(\textup{Jac}_{X})$ of the Jacobian variety of $X/K$, a curve being acted upon by $G$. 

\begin{definition}[\cite{DGKM} Definition 3.3] \label{def:pseud_brauer_curves} Let $G$ be a finite group which acts on a curve $X/K$ via $K$-automorphisms. We say that $\Theta$ is a \textit{pseudo Brauer relation for $G$ and $X$} if $\Theta$ is a pseudo Brauer relation relative to $V_{\ell}(\textup{Jac}_{X})$. 

\end{definition}

As already mentioned, any pseudo Brauer relation for $G$ and $X$ induces an isogeny between Jacobians. This isogeny is then utilised by Theorem \ref{regulator_constant_rank_parity} to extract certain rank parities. This can be accomplished by first finding a subset of self-dual $\mathbb{Q}_{p}[G]$-representations $S_{\Theta,p}$, and then the multiplicity of these representations in $\mathcal{X}_{p}(\textup{Jac}_{X})$.

\begin{definition}[cf. \cite{DGKM} Definition 2.9] \label{def_special_subset}
We write $\mathcal{R}_{G}$ to denote the set of all self-dual $\mathbb{Q}_{p}[G]$-representations which are either irreducible or of the form $\tau \oplus \tau^{*}$ for some irreducible $\tau$. Then, we define the subsets $S_{\Theta,p}(X) \subseteq \mathcal{R}_{G}$ to be
\begin{equation*}
S_{\Theta,p}(X) = \{ \rho \in \mathcal{R}_{G} \ | \ \langle \rho , \mathcal{X}_{p}(\textup{Jac}_{X}) \rangle >0 \ \textup{and} \ \textup{ord}_{p} \ \mathcal{C}_{\Theta}(\rho) \equiv 1 \mod 2\},
\end{equation*} 
where $\Theta$ is a pseudo Brauer relation for $G$ and $X$. Where no confusion lurks, we will simply write $S_{\Theta,p}$ to denote this set.
\end{definition}

We end this subsection by recording some instances where the regulator constant $\mathcal{C}_{\Theta}(\mathcal{V})$ is known to be trivial in which case $\mathcal{V}$ cannot lie in $S_{\Theta,p}$.

\begin{lemma}[cf. \cite{tamroot} Corollary 2.25, Lemma 2.26]  \label{trivial_regulator_constant} Let $\mathcal{V}$ be a self-dual $\L[G]$-representation, and let $\Theta=\sum_{i} H_{i} - \sum_{j} H_{j}'$ be a pseudo Brauer relation with respect to $\mathcal{V}$. In addition, suppose that either
 \begin{enumerate}
 \item $\langle \mathcal{V}, \L[G/H_{i}] \rangle = \langle \mathcal{V}, \L[G/H_{j}'] \rangle = 0$ for all $i,j$,
 \item $\mathcal{V} \otimes \bar{\L}$ is a symplectic $\bar{\L}[G]$-representation,
 \item $\mathcal{V} \otimes \bar{\L} \cong \tau \oplus \tau$ for some $\bar{\L}[G]$-representation $\tau$
 \item all $\bar{\L}$-irreducible constituents of $\mathcal{V} \otimes \bar{\L}$ are not-self dual
 \end{enumerate}
 Then, $\mathcal{C}_{\Theta}(\mathcal{V})=1$. 
 \end{lemma}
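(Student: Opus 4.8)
The plan is to compute $\mathcal{C}_{\Theta}(\mathcal{V})$ from a convenient $G$-invariant, non-degenerate, $\L$-bilinear pairing on $\mathcal{V}$ --- this is legitimate because $\mathcal{C}_{\Theta}(\mathcal{V})$ is independent of that choice by \cite[Theorem 2.7]{DGKM}. First I would record the bookkeeping that makes the ratio in Definition \ref{def:regulator_constants_for_pseudo_brauer} well-posed: since $\dim_{\L}\mathcal{V}^{H}=\langle\textup{Ind}_{H}^{G}\mathbf{1},\mathcal{V}\rangle$ for every $H\leq G$, pairing the $G$-representation isomorphism underlying $\Theta$ with $\mathcal{V}$ and invoking $\langle\rho_{1},\mathcal{V}\rangle=\langle\rho_{2},\mathcal{V}\rangle=0$ gives $\sum_{i}\dim\mathcal{V}^{H_{i}}=\sum_{j}\dim\mathcal{V}^{H_{j}'}$, so $\mathcal{C}_{\Theta}^{\mathcal{B},\mathcal{B}'}(\mathcal{V})$ is a quotient of determinants of square matrices of the same size. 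The same identity disposes of case (1): there $\dim\mathcal{V}^{H_{i}}=\langle\mathcal{V},\L[G/H_{i}]\rangle=0$ and $\dim\mathcal{V}^{H_{j}'}=0$, so both spaces $\bigoplus_{i}\mathcal{V}^{H_{i}}$ and $\bigoplus_{j}\mathcal{V}^{H_{j}'}$ vanish, the Gram matrices are empty, and $\mathcal{C}_{\Theta}(\mathcal{V})=1/1=1$.

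For cases (2)--(4) the key claim is that $\mathcal{V}$ admits a non-degenerate, $G$-invariant, \emph{alternating} $\L$-bilinear form. I would obtain this in two steps. First, $\mathcal{V}\otimes_{\L}\bar{\L}$ carries such a form over $\bar{\L}$: in (2) this is the meaning of being symplectic; in (3), self-duality of $\mathcal{V}$ forces $\tau\cong\tau^{*}$ by Krull--Schmidt, so $\mathcal{V}\otimes\bar{\L}\cong\tau\oplus\tau\cong\tau\oplus\tau^{*}$; in (4) the constituents of $\mathcal{V}\otimes\bar{\L}$ pair up under duality (none being self-dual, and each occurring with the same multiplicity as its dual), so $\mathcal{V}\otimes\bar{\L}\cong\mathcal{U}\oplus\mathcal{U}^{*}$ for a suitable $\mathcal{U}$; and on any module of the shape $\mathcal{U}\oplus\mathcal{U}^{*}$ the pairing $\bigl((u,\phi),(u',\phi')\bigr)\mapsto\phi'(u)-\phi(u')$ is alternating, non-degenerate and $G$-invariant. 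Second, I would descend to $\L$: the $\L$-vector space of $G$-invariant $\L$-bilinear forms on $\mathcal{V}$ splits as the subspace of symmetric forms plus the subspace of alternating forms, compatibly with $\otimes_{\L}\bar{\L}$, so the alternating summand becomes, over $\bar{\L}$, a space containing a non-degenerate form; hence the polynomial on this summand cutting out degeneracy is not identically zero, and as $\L$ is infinite the alternating summand already contains a non-degenerate form $\omega$ over $\L$.

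Finally I would compute $\mathcal{C}_{\Theta}(\mathcal{V})$ using $\omega$ as the pairing. For each $H\leq G$ there is an $\omega$-orthogonal decomposition $\mathcal{V}=\mathcal{V}^{H}\perp\mathcal{V}_{0}$, with $\mathcal{V}_{0}$ the sum of the non-trivial $H$-isotypic components, because the $H$-equivariant isomorphism $\mathcal{V}\to\mathcal{V}^{*}$ induced by $\omega$ carries $\mathcal{V}_{0}$ onto the annihilator of $\mathcal{V}^{H}$; therefore $\omega|_{\mathcal{V}^{H}}$ is again non-degenerate and alternating, $\dim\mathcal{V}^{H}$ is even, and in any $\L$-basis $\det\bigl(\tfrac{1}{|H|}\omega|_{\mathcal{V}^{H}}\bigr)=|H|^{-\dim\mathcal{V}^{H}}\cdot(\text{pfaffian})^{2}\in\L^{\times 2}$. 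Hence $\det\langle\mathcal{B},\mathcal{B}\rangle_{1}$ and $\det\langle\mathcal{B}',\mathcal{B}'\rangle_{2}$ are both squares in $\L^{\times}$, so $\mathcal{C}_{\Theta}(\mathcal{V})$ is the trivial class. \textbf{The step I expect to be the main obstacle} is this last descent from $\bar{\L}$ to $\L$, combined with checking that $\omega$ restricts to a non-degenerate form on each $\mathcal{V}^{H}$; the rest is routine linear algebra, and the whole argument parallels \cite[Corollary 2.25, Lemma 2.26]{tamroot}.
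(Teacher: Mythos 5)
Your proof is correct: case (1) via vanishing of all the invariant subspaces $\mathcal{V}^{H_i}$, $\mathcal{V}^{H_j'}$, and cases (2)--(4) via producing a $G$-invariant non-degenerate alternating $L$-bilinear form on $\mathcal{V}$ (descended from $\bar{L}$ using that invariant alternating forms base-change correctly and $L$ is infinite) followed by the Pfaffian/even-dimension argument, which is exactly the strategy behind the cited results \cite[Corollary 2.25, Lemma 2.26]{tamroot} -- the paper itself gives no proof beyond that citation. The two delicate points you flag, the descent of the alternating form to $L$ and the non-degeneracy of its restriction to each $\mathcal{V}^{H}$ (via the orthogonal splitting off of the trivial $H$-isotypic component), are both handled correctly.
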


\subsection{The local invariant $\Lambda_\Theta(X/\mathcal{K})$}
We recall the definition of the {local arithmetic invariant} $\Lambda_{\Theta}(X/\mathcal{K})$ attached to a curve $X$ which is defined over a local field $\mathcal{K}$. This appears on the right hand side of Theorem \ref{regulator_constant_rank_parity}.

\begin{notation}
We continue to write $\Theta=\sum_{i} H_{i} - \sum_{j} H_{j}'$ to denote a pseudo Brauer relation for $G$ and $X$. In addition, we fix bases $\mathcal{B}_{1}$ and $\mathcal{B}_{2}$ for $\Omega^{1}(\prod_{i}  \Jac_{X/H_i})$ and $\Omega^{1}(\prod_{j} \Jac_{X/H_j'})$, respectively and write $\omega(\mathcal{B}_{1})$ to denote the exterior form on $\prod_i \Jac_{X/H_i}$ given by the wedge product of elements in $\mathcal{B}_{1}$, and similarly for $\omega(\mathcal{B}_{2})$.
\end{notation}
 
\begin{definition} \label{def:lemma:local_invariants} 
Suppose that $\Omega^{1}(\JX)$ is a $\mathcal{K}[G]$-representation which is realisable over $\mathbb{Q}$. For any choice of bases $\mathcal B_1, \mathcal B_2$ for $\Omega^{1}(\prod_{i} \Jac_{X/H_i})$ and $\Omega^{1}(\prod_{j} \Jac_{X/H_j'})$, we define
\[ \Lambda_{\Theta}(X/{\mathcal{K}})= \frac{C( \prod_{i} \Jac_{X/H_{i}},\omega(\mathcal{B}_{1})) \ }{C( \prod_{j} \Jac_{X/H_{j}'},\omega(\mathcal{B}_{2})) } \cdot \frac{\prod_{i}\mu({X/H_{i}})}{\prod_{j}\mu( {X/H_{j}'})}\cdot  \
\Biggl{|}\sqrt{\frac{\ct(\Omega^{1}(\JX))}{\mathcal{C}_{\Theta}^{\mathcal{B}_1,\mathcal{B}_{2}}(\Omega^{1}(\JX))}}\Biggr{|}_{\mathcal{K}},
\] where $C$ and $\mu$ are as in \S\ref{General_notation},  and ${\ct(\Omega^{1}(\JX))}$ is the unique, square-free integer satisfying ${\ct(\Omega^{1}(\JX))} \equiv \mathcal{C}_{\Theta}(\Omega^{1}(\JX)) \mod \mathbb{Q}^{\times 2}$.
\end{definition}

\begin{remark} For the purposes of the present paper, it suffices to consider the definition of $\Lambda_{\Theta}(X/\mathcal{K})$ specialised to the case where $\Omega^{1}$ is realisable over $\mathbb{Q}$. For a slightly more general definition of $\Lambda_{\Theta}(X/\mathcal{K})$ allowing $\Omega^{1}$ to be self-dual, see \cite[\S 5]{DGKM}.

\end{remark}

\subsection{Galois closures, monodromy and Riemann-Hurwitz} \label{subsec_monodromy} We now record some well-known results concerning monodromy. These tools will be of great importance when constructing Galois covers using a {down-to-top} approach by taking the Galois closure of a branched cover of curves.

\begin{warning}
For the remainder of this subsection \textbf{only}, our primary assumption will be that all curves are geometrically connected and, unless otherwise stated, defined over $\mathbb{C}$. We note that this deviates from the ongoing conventions in \S \ref{notation_for_curves}.
\end{warning}

 We adopt a similar notation to the one used in \cite{decomposing_jacobians} to record the ramification in the fibre of a branch point and the cycle type of a permutation.
\begin{definition} \label{Def:ramification_structure}  Let $f: Z \to \mathbb{P}^{1}$ be a morphism of complex curves, and let $\mathcal{B}_{f}=\{b_{i}\}_{i=1}^{r} \subset \mathbb{P}^{1}(\mathbb{C}) $ be its branch locus. Suppose that the fibre $f^{-1}(b_i)$ contains $n_j$ points each with ramification index $e_j>1$, and suppose that $j$ takes values in $\{1,\dots,k\}$. Then, we write $R_{b_i}=\prod_{j=1}^{k} (n_j)^{e_j} $ to denote the \textit{ramification structure of $f$ at the point $b_i$}. The \textit{ramification structure of $f$} is the multi-set given by $\{R_{b_i}\}_{i=1}^{r}$.
\end{definition}

\begin{definition} Let $\sigma$ be a permutation in $S_{n}$ expressed as a product of disjoint cycles. Suppose that $\sigma$ contains $m_j$ cycles each of length $t_j >1$ where $j$ takes values in $\{1,\dots,k\}$. Then, we write $r_{\sigma} = \prod_{j=1}^{k} (t_j)^{m_j}$ to denote the \textit{cycle type of $\sigma$}.
\end{definition}

\begin{example}
Let $\sigma \in S_7$ be the element  $(1,2)(3,4)(5,6,7)$. Then, $r_{\sigma}=(2)^2(3)^1$. 
\end{example}

\begin{theorem}[\cite{Miranda}] \label{theorem_miranda}Let $f:Z \to \mathbb{P}^{1}$ be a degree $n$ cover of complex curves branched at $r$ points $\{b_i\}_{i=1}^{r}$, and write $F:X_Z \to \mathbb{P}^1$ to denote its Galois closure. Then, there exists an $r$-tuple $(\sigma_1,\dots,\sigma_r)$ of elements in $S_n$ such that the ramification structure at $b_i$ matches the cycle type of $\sigma_i$. In addition, the Galois group of $X_Z \to \mathbb{P}^1$ is a transitive subgroup of $S_n$ generated by the $\sigma_i$.

\end{theorem}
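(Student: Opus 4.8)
\emph{Approach.} The plan is to delete the branch locus, where $f$ becomes an honest topological covering, and read off the $\sigma_i$ as the monodromy permutations of small loops around the branch points; the Galois closure will then be the covering attached to the kernel of the monodromy representation, with deck group the monodromy group.

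\emph{Setting up the monodromy and the permutations.} Put $U=\mathbb{P}^{1}\setminus\mathcal{B}_{f}$ and $Z^{\circ}=f^{-1}(U)$, so that $f|_{Z^{\circ}}\colon Z^{\circ}\to U$ is an unramified covering of degree $n$; it is connected because $Z$ is (a non-empty Zariski-open subset of a connected curve is connected). Fix a base point $x_{0}\in U$ and an identification $f^{-1}(x_{0})=\{1,\dots,n\}$; path-lifting gives the monodromy homomorphism $\rho\colon\pi_{1}(U,x_{0})\to S_{n}$, whose image $M$ acts transitively on $\{1,\dots,n\}$, and whose point-stabiliser $\rho^{-1}(\textup{Stab}_{S_n}(1))$ is the subgroup classifying $Z^{\circ}\to U$. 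Next, choose relative to $x_{0}$ a standard system of loops $\gamma_{1},\dots,\gamma_{r}$, with $\gamma_{i}$ encircling $b_{i}$ once, so that $\pi_{1}(U,x_{0})=\langle\gamma_{1},\dots,\gamma_{r}\mid\gamma_{1}\cdots\gamma_{r}=1\rangle$, and set $\sigma_{i}:=\rho(\gamma_{i})$. Then $\sigma_{1}\cdots\sigma_{r}=1$ and $M=\langle\sigma_{1},\dots,\sigma_{r}\rangle$. To identify the cycle type of $\sigma_{i}$, work over a small disc $\Delta\ni b_{i}$: $f^{-1}(\Delta)$ is a disjoint union of discs, one for each point of $f^{-1}(b_{i})$, each mapping to $\Delta$ in the local form $w\mapsto w^{e}$ with $e$ the ramification index there; a generator of $\pi_{1}(\Delta\setminus\{b_{i}\})$ (which maps to $\gamma_{i}$) cyclically permutes the $e$ sheets over each such punctured disc and leaves the blocks otherwise separate. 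Hence the cycle type of $\sigma_{i}$ is $\prod_{j}(n_{j})^{e_{j}}=R_{b_{i}}$, exactly the ramification structure of $f$ at $b_{i}$.

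\emph{The Galois closure.} Let $N:=\ker\rho$, a normal subgroup of finite index in $\pi_{1}(U,x_{0})$ contained in the subgroup classifying $Z^{\circ}$. The connected covering $V\to U$ that $N$ classifies is Galois with deck group $\pi_{1}(U,x_{0})/N\cong M$, and it dominates $Z^{\circ}\to U$. Passing to smooth projective models (equivalently, normalising $\mathbb{P}^{1}$ in the function field of $V$, or using the Riemann existence theorem to make $V$ algebraic), the deck transformations extend over the punctures and turn $V\to U$ into a branched Galois cover $X_{Z}\to\mathbb{P}^{1}$ which is the Galois closure of $f$, with $\textup{Gal}(X_{Z}/\mathbb{P}^{1})\cong M$. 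Writing $Z=X_{Z}/H$ with $H$ the image in $M$ of the subgroup classifying $Z^{\circ}$, the action of $M$ on $M/H$ is transitive of degree $[M:H]=n$ and faithful (no nontrivial subgroup of $H$ is normal in $M$, since $X_{Z}$ is the Galois closure); under the resulting embedding $M\hookrightarrow S_{n}$ the elements $\sigma_{i}$ are precisely those of the previous paragraph and generate $M$, which gives both assertions.

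\emph{Main obstacle.} The only genuinely non-formal point is the local computation in the second paragraph — that a small loop around a branch point acts on the neighbouring sheets as a product of cycles whose lengths are the ramification indices there — together with the bookkeeping needed to make ``cycle type $=$ ramification structure'' an honest equality of multisets; the remainder is the standard covering-space dictionary plus the existence of the Galois closure as an algebraic curve.
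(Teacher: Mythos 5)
Your argument is correct: it is the standard monodromy proof (puncture the sphere, read off $\sigma_i$ from loops around the $b_i$ via the local model $w\mapsto w^{e}$, and realise the Galois closure as the covering attached to $\ker\rho$, i.e.\ the normal core of the point stabiliser, then compactify via the Riemann existence theorem/normalisation). The paper itself gives no proof of this statement — it quotes it from Miranda — and your proof is essentially the argument in that cited source, so there is nothing to reconcile beyond noting that the paper's notation for $R_{b_i}$ has the roles of the multiplicity and the ramification index swapped relative to its cycle-type notation, a discrepancy you inherit but which does not affect the mathematics.
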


The tuple $(\sigma_1,\dots,\sigma_r)$ afforded by Theorem \ref{theorem_miranda} is usually called the \textit{monodromy tuple of $F$}.

\begin{remark} \label{remark:decomposition_groups} As detailed in \cite[Proposition 3.18 \& Remark 3.19]{decomposing_jacobians}, the cyclic group $\langle \sigma_i \rangle$ afforded by Theorem \ref{theorem_miranda} coincides (up to conjugacy) with the decomposition group of any point in the fibre $F^{-1}(b_i)$. 
\end{remark}
The following theorem gives a description for the $G$-module structure of the $\ell$-adic Tate module of the Jacobian variety of a complex curve $X$ in terms of the of monodromy of the branched cover $X \to Y$. Given an element $\sigma \in G$, we write $\mathbf{1}_{\sigma}^{*}$ to denote the permutation representation $\textup{Ind}_{\langle \sigma \rangle}^{G} \mathbf{1}$.

\begin{theorem}[\cite{Ellenberg} Proposition 1.1] \label{equivariant_riemann_hurwitz}
Let $F:X \to Y$ be a Galois cover of complex curves which is branched at $\{b_{i}\}_{i=1}^{r}$, and write $G$ to denote its Galois group. In addition, suppose that $g_i$ is a generator for the decomposition group of any point in $F^{-1}(b_i)$. Then, for any prime $\ell$, the following $G$-representations 
\[V_{\ell}(\textup{Jac}_{X}) \ \ \textup{and} \ \ \mathbf{1}^{\oplus 2} \oplus (\mathbf{1}_{e}^{*})^{\oplus(2g(Y)-2)} \oplus \bigoplus_{i=1}^{r}(\mathbf{1}_{e}^{*} \ominus \mathbf{1}_{g_i}^{*}), \]
become isomorphic after extending scalars to $\mathbb{C}$, where $g(Y)$ denotes the genus of the curve $Y$.
\end{theorem}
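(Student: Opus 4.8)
The plan is to reduce the statement to the classical equivariant Riemann–Hurwitz formula for the action of $G$ on the (co)homology of $X$, together with the identification of decomposition groups with the cyclic monodromy groups recorded in Remark~\ref{remark:decomposition_groups}. First I would recall that, since $F:X\to Y$ is a Galois cover of complex curves with group $G$, Chevalley–Weil (equivalently, the Lefschetz fixed point formula applied to each $g\in G$) computes the character of $G$ acting on $H^1(X,\mathbb{Q}_\ell)$, hence on $V_\ell(\mathrm{Jac}_X)$. Concretely, for $g\neq e$ the Lefschetz number of $g$ on $X$ equals the number of fixed points of $g$, which is governed entirely by the local behaviour at the branch points, while the global genus of $X$ is given by Riemann–Hurwitz. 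The point is to package this character computation as an honest isomorphism of $\mathbb{C}[G]$-representations with the virtual representation displayed in the statement.

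The key steps, in order: (i) For the base curve $Y$ of genus $g(Y)$, the cover $X\to Y$ away from the branch points is an unramified $G$-cover, so the part of $H^1(X)$ "coming from $Y$" contributes $\mathbf 1^{\oplus 2}$ (the $H^0$ and $H^2$ pieces, or equivalently the weight-$0$ and top pieces surviving in $V_\ell$) plus $(\mathbf 1_e^*)^{\oplus(2g(Y)-2)}$, where $\mathbf 1_e^* = \mathrm{Ind}_{\{e\}}^G\mathbf 1$ is the regular representation; this is exactly the statement that the generic fibre contributes a copy of $\mathbb{Q}_\ell[G]$ for each of the $2g(Y)-2$ "free" dimensions measured by the Euler characteristic of the punctured $Y$, up to the two correction terms. (ii) At each branch point $b_i$, the decomposition group is cyclic, generated (up to conjugacy) by $g_i$, and the local contribution to the difference between $V_\ell(\mathrm{Jac}_X)$ and the unramified model is $\mathbf 1_e^* \ominus \mathbf 1_{g_i}^*$: this is the standard local computation, since a fibre over $b_i$ consists of $[G:\langle g_i\rangle]$ points, the induced representation on this fibre is $\mathbf 1_{g_i}^*$, and replacing it by the generic fibre $\mathbf 1_e^*$ accounts precisely for the ramification. (iii) Assemble (i) and (ii): the virtual character so obtained matches the character of $V_\ell(\mathrm{Jac}_X)$ by the Chevalley–Weil/Lefschetz computation, and since two $\mathbb{C}[G]$-representations with the same character are isomorphic, we are done after extending scalars to $\mathbb{C}$. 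Alternatively, one simply cites \cite[Proposition~1.1]{Ellenberg} directly, translating its hypotheses into the present notation ($\mathbf 1_\sigma^* = \mathrm{Ind}_{\langle\sigma\rangle}^G\mathbf 1$ and $g_i$ a generator of the decomposition group, legitimised by Remark~\ref{remark:decomposition_groups}).

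The main obstacle, and the only place requiring genuine care rather than bookkeeping, is step (ii): pinning down that the local correction term at $b_i$ is exactly $\mathbf 1_e^*\ominus\mathbf 1_{g_i}^*$ and not some twisted variant. This hinges on the fact that the decomposition group at a point of $F^{-1}(b_i)$ acts on a formal disc through a faithful character, so its contribution to $H^1$ of the punctured curve is the augmentation-type virtual representation $\mathbf 1 \ominus (\text{sum of nontrivial characters of }\langle g_i\rangle)$, which induces up to $\mathbf 1_e^* \ominus \mathbf 1_{g_i}^*$ after accounting for all points in the fibre; here one uses that the inertia at each of the $[G:\langle g_i\rangle]$ points in the fibre is conjugate to $\langle g_i\rangle$ and that induction from a subgroup of the trivial representation is compatible with this. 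Once this local picture is correct, the global assembly is forced by comparing characters (equivalently, by additivity of the Euler characteristic in the stratification of $X$ by the preimages of the branch points and their complement), and the realisability over $\mathbb{C}$ is all that is claimed, so no rationality subtleties intervene.
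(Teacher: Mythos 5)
Your proposal is correct, but note that the paper does not actually prove this statement: it is imported verbatim from \cite[Proposition 1.1]{Ellenberg}, so the ``paper's proof'' is precisely your fallback option of citing Ellenberg, while your primary argument is a sound reconstruction of the standard Chevalley--Weil/Lefschetz proof underlying that proposition. The load-bearing part of your sketch is step (iii): for $g\neq e$ the topological Lefschetz fixed point formula gives $\mathrm{tr}\bigl(g\mid H^{1}(X,\mathbb{C})\bigr)=2-\#\mathrm{Fix}(g)$ (each fixed point of a nontrivial holomorphic automorphism of a curve is isolated of index $+1$), the fibre $F^{-1}(b_i)$ is the $G$-set $G/\langle g_i\rangle$, so $\#\mathrm{Fix}(g)=\sum_{i}\chi_{\mathbf{1}_{g_i}^{*}}(g)$, and this matches the character of the displayed virtual representation at $g$; at $g=e$ the dimensions agree by ordinary Riemann--Hurwitz; since the right-hand side is built from permutation representations it is self-dual, so the distinction between $V_{\ell}(\mathrm{Jac}_X)\otimes\mathbb{C}$, $H^{1}$ and $H_{1}$ is immaterial. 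Two glosses in your steps (i)--(ii) should not be taken literally: the $\mathbf{1}^{\oplus 2}$ is not ``$H^{0}$ and $H^{2}$ pieces surviving in $V_{\ell}$'' but a virtual correction inside $H^{1}$, and the whole identity is only a virtual one (for $g(Y)=0$ the exponent $2g(Y)-2$ is negative, cf.\ Corollary~\ref{eq_riemann_hurwitz_gc}); neither point causes a gap, because the character comparison in (iii) is what carries the proof and it handles virtual representations without further comment.
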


For the remainder of this section, we write $K$ to denote a number field or a local field of characteristic $0$.
\begin{corollary} \label{equivariant_riemann_hurwitz_gc}
Let $X/K$ be a geometrically connected curve. Let $G$ be a subgroup of $K$-automorphisms of $X$. Then, for any prime $\ell$, Theorem \ref{equivariant_riemann_hurwitz} holds for $X/K$.
\end{corollary}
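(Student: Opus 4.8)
The plan is to reduce the statement over $K$ to the statement over $\mathbb{C}$, which is precisely Theorem \ref{equivariant_riemann_hurwitz}, by base change along a fixed embedding $\bar K \hookrightarrow \mathbb{C}$. First I would pass from $X/K$ to $X_{\mathbb{C}} = X \times_K \mathbb{C}$, using that $X$ is geometrically connected so that $X_{\mathbb{C}}$ is a connected (hence irreducible, smooth, proper) complex curve of the same genus $g(X)$. The $K$-automorphism action of $G$ on $X$ induces an action of $G$ on $X_{\mathbb{C}}$ by $\mathbb{C}$-automorphisms, and the quotient $Y = X/G$ base changes to $Y_{\mathbb{C}} = X_{\mathbb{C}}/G$ (formation of the quotient of a curve by a finite group commutes with the flat base change $K \to \mathbb{C}$, as do quotients $X/H$ for $H \leq G$), so that $F \colon X \to Y$ base changes to a Galois cover $F_{\mathbb{C}} \colon X_{\mathbb{C}} \to Y_{\mathbb{C}}$ with the same Galois group $G$.

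The key point is that the data entering both sides of the claimed isomorphism are insensitive to base change. On the left, $V_\ell(\textup{Jac}_X)$ is canonically identified with $V_\ell(\textup{Jac}_{X_{\mathbb{C}}})$ as $G$-representations, because $\textup{Jac}_X$ base changes to $\textup{Jac}_{X_{\mathbb{C}}}$ and the $\ell$-adic Tate module of an abelian variety, together with any automorphism action, is unchanged under extension of algebraically closed (or separably closed) base fields of characteristic $0$; concretely the torsion subgroups $\textup{Jac}_X(\bar K)[\ell^n]$ and $\textup{Jac}_{X_{\mathbb{C}}}(\mathbb{C})[\ell^n]$ agree compatibly with $G$. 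On the right, the combinatorial ingredients — the genus $g(Y) = g(Y_{\mathbb{C}})$, the number $r$ of branch points, and the conjugacy classes of the decomposition groups $\langle g_i \rangle$ — are all preserved: the branch locus and ramification indices of $F$ over a point can be read off from the completed local rings, which base change faithfully, and the decomposition (inertia) group of a point above $b_i$ is the same before and after extending to $\mathbb{C}$. Hence the virtual $G$-representation $\mathbf 1^{\oplus 2} \oplus (\mathbf 1_e^*)^{\oplus(2g(Y)-2)} \oplus \bigoplus_{i=1}^r (\mathbf 1_e^* \ominus \mathbf 1_{g_i}^*)$ computed for $X/K$ coincides with the one computed for $X_{\mathbb{C}}/\mathbb{C}$.

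With these identifications in place, the corollary follows immediately: apply Theorem \ref{equivariant_riemann_hurwitz} to $F_{\mathbb{C}} \colon X_{\mathbb{C}} \to Y_{\mathbb{C}}$ to get the isomorphism of $G$-representations after tensoring with $\mathbb{C}$, and then transport it back through the canonical identification $V_\ell(\textup{Jac}_X) \cong V_\ell(\textup{Jac}_{X_{\mathbb{C}}})$ and the equality of the right-hand virtual characters. I do not expect a serious obstacle here; the only thing requiring a little care is checking that the quotients $X/H$ and the branched-cover data genuinely commute with base change to $\mathbb{C}$ (so that "decomposition group of a point in $F^{-1}(b_i)$" is a base-change-stable notion), but this is standard for finite group quotients of smooth proper curves over a field of characteristic $0$ — one can invoke, e.g., the discussion in \cite[Appendix A]{DGKM} and Remark \ref{remark:decomposition_groups} together with the flatness of $\mathrm{Spec}\,\mathbb{C} \to \mathrm{Spec}\,K$. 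When $K$ is a local field of characteristic $0$ rather than a number field, the argument is identical, fixing an embedding $\bar K \hookrightarrow \mathbb{C}$ of the chosen algebraic closure.
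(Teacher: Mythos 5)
Your proposal is correct and follows essentially the same route as the paper: the paper's proof is simply to base change along an embedding $K \hookrightarrow \mathbb{C}$ and invoke Theorem \ref{equivariant_riemann_hurwitz}, and your argument is that same reduction with the base-change invariance of the Tate module, quotients, branch data and decomposition groups spelled out explicitly. No gaps.
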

\begin{proof}
Base-changing along a field embedding $K \hookrightarrow{} \mathbb{C}$, it suffices to prove the result for $K$ replaced with $\mathbb{C}$ and $X$ a complex curve. This is precisely Theorem \ref{equivariant_riemann_hurwitz}.
\end{proof}
\begin{corollary} \label{eq_riemann_hurwitz_gc} Let $f:Z \to \mathbb{P}^{1}$ be a $K$-rational morphism of geometrically connected curves. We write $\mathcal{B}_{f} = \{b_i\}_{i=1}^{r}$ to denote its branch locus. Let $F:X_Z \to \mathbb{P}^{1}$ be the Galois closure of $f$, and in addition suppose that $X_Z/K$ is geometrically connected. Then, for any prime $\ell$, the $G$-representations
\[V_{\ell}(\textup{Jac}_{X_{Z}}) \ \ \textup{and} \ \ \mathbf{1}^{\oplus 2} \ominus (\mathbf{1}_{e}^{*})^{\oplus 2} \oplus \bigoplus_{i=1}^{r}(\mathbf{1}_{e}^{*} \ominus \mathbf{1}_{\sigma_i}^{*}), \]
become isomorphic after extending scalars to $\mathbb{C}$, where the $\sigma_i$ coincide with the permutations afforded by Theorem \ref{theorem_miranda}. 

\end{corollary}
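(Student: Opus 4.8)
The plan is to obtain this statement as a direct specialisation of Corollary \ref{equivariant_riemann_hurwitz_gc} to the Galois cover $F\colon X_Z\to\mathbb{P}^1$, combined with the identification of decomposition groups with the monodromy permutations recorded in Remark \ref{remark:decomposition_groups}.

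First I would verify that $F\colon X_Z\to\mathbb{P}^1$ genuinely is a Galois cover to which Corollary \ref{equivariant_riemann_hurwitz_gc} applies. Since $f$ is $K$-rational and $X_Z/K$ is assumed geometrically connected, $K(X_Z)/K(x)$ is a Galois extension whose group $G$ acts on $X_Z$ by $K$-automorphisms; geometric connectedness of $X_Z$ moreover guarantees that $G$ does not shrink upon base change along an embedding $K\hookrightarrow\mathbb{C}$, i.e. $G=\textup{Gal}(\mathbb{C}(X_Z)/\mathbb{C}(x))$, so $F$ base-changes to a genuine Galois cover of complex curves with group $G$, branched within $\mathcal{B}_f=\{b_i\}_{i=1}^r$.

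Next I would apply Corollary \ref{equivariant_riemann_hurwitz_gc} with $Y=\mathbb{P}^1$. Since $g(\mathbb{P}^1)=0$, the term $(\mathbf{1}_e^*)^{\oplus(2g(Y)-2)}$ becomes $(\mathbf{1}_e^*)^{\oplus(-2)}=\ominus(\mathbf{1}_e^*)^{\oplus 2}$, and one obtains
\[ V_\ell(\textup{Jac}_{X_Z})\otimes\mathbb{C}\;\cong\;\mathbf{1}^{\oplus 2}\ominus(\mathbf{1}_e^*)^{\oplus 2}\oplus\bigoplus_{i=1}^r\bigl(\mathbf{1}_e^*\ominus\mathbf{1}_{g_i}^*\bigr), \]
where $g_i$ is a generator of the decomposition group of any point in $F^{-1}(b_i)$.

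Finally I would invoke Remark \ref{remark:decomposition_groups}, which asserts that $\langle g_i\rangle$ is $G$-conjugate to $\langle\sigma_i\rangle$ with $\sigma_i$ the permutation afforded by Theorem \ref{theorem_miranda}. Since $\textup{Ind}$ from conjugate subgroups gives isomorphic representations, $\mathbf{1}_{g_i}^*=\textup{Ind}_{\langle g_i\rangle}^G\mathbf{1}\cong\textup{Ind}_{\langle\sigma_i\rangle}^G\mathbf{1}=\mathbf{1}_{\sigma_i}^*$; substituting these isomorphisms into the displayed formula yields the claim (and incidentally shows the statement is insensitive to the usual ambiguity in the monodromy tuple, which is only defined up to simultaneous conjugation). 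I do not anticipate a genuine obstacle: the mathematical content lies entirely in the cited results, and the only points requiring a little care are the bookkeeping that geometric connectedness of $X_Z$ lets one identify the arithmetic Galois group with the geometric one, and the translation between "decomposition-group generator" and "monodromy permutation", which is precisely Remark \ref{remark:decomposition_groups}.
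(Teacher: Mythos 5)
Your proposal is correct and follows essentially the same route as the paper: apply Corollary \ref{equivariant_riemann_hurwitz_gc} to the Galois closure $F\colon X_Z\to\mathbb{P}^1$ with $g(\mathbb{P}^1)=0$ and identify the decomposition-group generators with the monodromy permutations $\sigma_i$ (up to conjugacy, which is harmless for induced representations) via Remark \ref{remark:decomposition_groups}. The only difference is cosmetic: where you assert that $F$ is branched within $\mathcal{B}_f$, the paper cites Stichtenoth for the coincidence of the branch loci of $f$ and of its Galois closure.
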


\begin{proof}
By \cite[Corollary 3.9.3(b)]{algebraic_function_fields}, the branch locus of $Z \to \mathbb{P}^{1}$ coincides with the branch locus of its Galois closure $X_Z \to \mathbb{P}^{1}$, while by Remark \ref{remark:decomposition_groups} we deduce that the groups generated by the $\sigma_i$ are (up to conjugacy) equal to the decomposition at any point in the fibre of $X_Z \to \mathbb{P}^{1}$ at $b_i$. As a result, the claim follows by Corollary \ref{equivariant_riemann_hurwitz_gc}. 

\end{proof}
\section{Hyperelliptic Jacobians over $K(\sqrt{d})$}\label{Sec_KT}
In the following section, we provide a local expression for the $2^{\infty}$-Selmer rank of the Jacobian of a hyperelliptic curve $Y/K$ base changed to a quadratic field extension $L=K(\sqrt{d})$. We first prove pseudo Brauer verifiability for the isogeny 
\[ \textup{Res}_{L/K} \textup{Jac}_{Y/L} \to \textup{Jac}_{Y} \times \textup{Jac}_{Y^{d}}, \] where $Y^{d}$ denotes the quadratic twist of $Y$ by $d$. We write $Y_{L}$ to denote $Y \times_{K} L$ the base change to $L$, and as in \S\ref{notation_for_curves}, we write $X={}_{K}(Y_{L})$ to denote the curve obtained from $Y_{L}$ by forgetting the $L$-structure.

\begin{theorem} \label{Proposition:KT} Let $Y/K$ be a hyperelliptic curve with affine model $y^2=f(x)$, where $f$ is a square-free polynomial. Then, the function field inclusions $K(x) \subseteq K(x,y) \subseteq L(x,y)$ fit into a Galois diagram

\begin{center}
\begin{tikzcd}
                                & X \arrow[ld, no head] \arrow[d, no head] \arrow[rd, no head] &                             \\
Y=X/C_2^a \arrow[rd, no head] & Y^{d}=X/C_2^b \arrow[d, no head]                             & X/C_2^c \arrow[ld, no head] \\
                                & \mathbb{P}_{x}^{1}=X/(C_2\times C_2)                       &                            
\end{tikzcd}
\end{center}
where $C_{2}^{a},C_{2}^{b},C_{2}^{c}$ denote all proper subgroups of $C_{2} \times C_{2}$.
\end{theorem}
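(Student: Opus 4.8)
The plan is to compute the function field $K(X)=L(x,y)$ explicitly, recognise $\mathrm{Gal}\bigl(L(x,y)/K(x)\bigr)$ as a Klein four-group, and then read off the three intermediate quotient curves from its three index-$2$ subfields. Throughout I use that $d$ is not a square in $K$ (so that $L/K$ is genuinely quadratic) and that $\deg f\ge 1$ (automatic for a hyperelliptic $Y$).

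First, since $Y$ is a smooth, proper, geometrically connected curve with $K(Y)=K(x,y)=K(x)[y]/(y^2-f)$, the curve $X={}_{K}(Y_L)$ is connected over $K$ with $K(X)=L(x,y)=K(x)\bigl(\sqrt d,\sqrt{f(x)}\bigr)$. Because $f$ is square-free of positive degree, $f(x)$ is not a square in $K(x)$, so $[K(x,y):K(x)]=2$; and $df(x)$ is not a square in $K(x)$ either (compare the order of vanishing at a root of $f$: it is odd on the left and even on the right), whence $\sqrt d\notin K(x,y)$ and $[L(x,y):K(x)]=4$. The same parity comparison shows that $d$, $f(x)$ and $df(x)$ represent three distinct nontrivial classes in $K(x)^\times/K(x)^{\times 2}$, so $L(x,y)/K(x)$ is Galois with group $V\cong C_2\times C_2$, generated by the hyperelliptic involution $\sigma\colon y\mapsto -y$ and coefficient conjugation $\tau$ (the nontrivial element of $\mathrm{Gal}(L/K)$, which is a $K$-automorphism of $X$ since $X$ is obtained by base change). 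Both $\sigma$ and $\tau$ fix $K(x)$, so $X\to\mathbb{P}^1_x$ is a $V$-cover over $K$.

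It remains to identify the quotients. The subgroup $\langle\tau\rangle$ has fixed field $K(x,y)=K(Y)$, so $X/\langle\tau\rangle\cong Y$; set $C_2^a:=\langle\tau\rangle$. The element $\sigma\tau$ fixes $w:=\sqrt d\,y$, and $w^2=d\,y^2=df(x)$, so the fixed field of $\langle\sigma\tau\rangle$ is $K(x,w)=K(x)\bigl(\sqrt{df(x)}\bigr)$, which is precisely the function field of the quadratic twist $Y^d\colon dv^2=f(u)$; hence $X/\langle\sigma\tau\rangle\cong Y^d$, and we set $C_2^b:=\langle\sigma\tau\rangle$. The remaining subgroup $C_2^c:=\langle\sigma\rangle$ has fixed field $K(x)(\sqrt d)=L(x)$, giving the third quotient $X/C_2^c$. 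Finally the fixed field of $V$ is $K(x)$, the function field of $\mathbb{P}^1_x$, so $X/V=\mathbb{P}^1_x$. Every curve in the diagram is thus a quotient of $X$ by a subgroup of $V$, the morphisms are the canonical ones, and the diagram commutes. The only points needing care are the (standard) fact that coefficient conjugation is a $K$-automorphism of ${}_{K}(Y_L)$ --- that is, forgetting the $L$-structure enlarges $\mathrm{Aut}_K(X)$ by $\mathrm{Gal}(L/K)$ --- and correctly matching $K(x)\bigl(\sqrt{df(x)}\bigr)$ with the function field of $Y^d$ rather than another twist; neither presents a genuine obstacle.
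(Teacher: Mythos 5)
Your proposal is correct and follows essentially the same route as the paper: identify $K(X)$ as the biquadratic extension $K(x)(\sqrt{d},\sqrt{f(x)})$ of $K(x)$, observe that the $C_2\times C_2$ action (hyperelliptic involution and Galois conjugation, the latter being a $K$-automorphism of ${}_K(Y_L)$, which the paper justifies by citing \cite[Remark A.31]{DGKM}) fixes $K(x)$, and match the three intermediate fields $K(x,y)$, $K(x,\sqrt{d}y)$, $K(x,\sqrt{d})$ with $Y$, $Y^d$ and the genus-$0$ quotient. Your extra verification that $d$, $f$, $df$ are distinct nontrivial square classes in $K(x)^\times$ simply makes explicit what the paper leaves implicit, so there is no substantive difference.
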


\begin{proof}
The function field $K(X)$ coincides with the biquadratic extension of $K(x)$ obtained by adjoining $y$ and $\sqrt{d}$. By \cite[Remark A.31]{DGKM}, this induces a $C_2 \times C_2$-action on $X$ via $K$-automorphisms obtained by separately negating $y$ and $\sqrt{d}$. The quotient curves agree with what is asserted by the claim in view of \cite[Remark A.30]{DGKM}. Indeed, two of the intermediate field extensions are $K(x,y)/(y^2-f(x)), K(x,\sqrt{d}y)/(dy^2-f(x))$ which coincide with the function fields of $Y$ and $Y^{d}$ respectively. \qedhere
\end{proof}

\begin{theorem}\label{pseudo_brauer_KT}
The pseudo Brauer relation $\Theta = \{e\}+2(C_2\times C_2) - C_2^a - C_2^b-C_2^c$ for $C_2 \times C_2$ and $X$ verifies an isogeny $\textup{Res}_{L/K} \textup{Jac}_{Y/L} \to \textup{Jac}_{Y/K} \times \textup{Jac}_{Y^{d}/K}$.

\end{theorem}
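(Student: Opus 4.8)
The plan is to check the two conditions of Definition~\ref{brauer_verifiability_definition} directly. Here the group is $G = C_2\times C_2$ acting on $X = {}_K(Y_L)$ as in Theorem~\ref{Proposition:KT}; on the ``plus'' side of $\Theta$ I would take $H_1 = \{e\}$ and $H_2 = H_3 = G$, and on the ``minus'' side $H_1' = C_2^a$, $H_2' = C_2^b$, $H_3' = C_2^c$. Most of the work is already in hand: Theorem~\ref{Proposition:KT} supplies the Galois diagram, and once $\Theta$ is recognised as a pseudo Brauer relation for $G$ and $X$, \eqref{isogeny_from_relation} automatically supplies an isogeny between the relevant products of Jacobians, so what remains is essentially bookkeeping.

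First I would check that $\Theta$ is an honest Brauer relation for $G$. Writing $\epsilon_a,\epsilon_b,\epsilon_c$ for the three non-trivial characters of $G$, with $\epsilon_\bullet$ trivial on $C_2^\bullet$, one has $\textup{Ind}_{\{e\}}^{G}\mathbf{1}\cong\mathbf{1}\oplus\epsilon_a\oplus\epsilon_b\oplus\epsilon_c$, $\textup{Ind}_{G}^{G}\mathbf{1}\cong\mathbf{1}$ and $\textup{Ind}_{C_2^\bullet}^{G}\mathbf{1}\cong\mathbf{1}\oplus\epsilon_\bullet$, so
\[
\textup{Ind}_{\{e\}}^{G}\mathbf{1}\;\oplus\;2\,\textup{Ind}_{G}^{G}\mathbf{1}\;\cong\;3\cdot\mathbf{1}\oplus\epsilon_a\oplus\epsilon_b\oplus\epsilon_c\;\cong\;\textup{Ind}_{C_2^a}^{G}\mathbf{1}\oplus\textup{Ind}_{C_2^b}^{G}\mathbf{1}\oplus\textup{Ind}_{C_2^c}^{G}\mathbf{1}.
\]
Hence $\Theta$ is a Brauer relation for $G$, and by Remark~\ref{brauer_verfiaibility} it is a pseudo Brauer relation for $G$ and $X$, so \eqref{isogeny_from_relation} yields a $K$-rational isogeny between $\prod_{j}\textup{Jac}_{X/H_j'}$ and $\prod_{i}\textup{Jac}_{X/H_i}$.

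Next I would identify the quotient curves and their Jacobians. By Theorem~\ref{Proposition:KT}, $X/C_2^a\cong Y$ and $X/C_2^b\cong Y^d$, while the third quadratic quotient $X/C_2^c$ has function field $K(x,\sqrt d)=L(x)$, so $X/C_2^c\cong{}_K(\mathbb{P}^1_{x,L})$, whose Jacobian is trivial, as is that of $X/(C_2\times C_2)=\mathbb{P}^1_x$. Finally $X/\{e\}=X={}_K(Y_L)$, and $\textup{Jac}_{{}_K(Y_L)}=\textup{Res}_{L/K}\textup{Jac}_{Y/L}$ by the standard compatibility of Jacobians with Weil restriction \cite[Appendix A]{DGKM}. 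Substituting these identifications into \eqref{isogeny_from_relation}, the isogeny attached to $\Theta$ is one between $\textup{Jac}_Y\times\textup{Jac}_{Y^d}$ and $\textup{Res}_{L/K}\textup{Jac}_{Y/L}$; since being isogenous is a symmetric relation (an isogeny admits a quasi-inverse isogeny), we obtain in particular a $K$-rational isogeny $\textup{Res}_{L/K}\textup{Jac}_{Y/L}\to\textup{Jac}_{Y/K}\times\textup{Jac}_{Y^d/K}$, and conditions (1) and (2) of Definition~\ref{brauer_verifiability_definition} then hold for this choice of subgroups by construction. I do not expect any real obstacle: the only non-formal inputs are the triviality of $\textup{Jac}_{\mathbb{P}^1}$ and the compatibility $\textup{Jac}_{{}_K(Y_L)}=\textup{Res}_{L/K}\textup{Jac}_{Y/L}$, both recorded in \cite[Appendix A]{DGKM}.
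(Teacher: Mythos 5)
Your argument is correct and follows essentially the same route as the paper: identify the quotient curves via Theorem~\ref{Proposition:KT}, observe that the quotients by $C_2^c$ and $C_2\times C_2$ have genus $0$ so their Jacobians drop out, invoke the Brauer relation $\Theta$ to get the isogeny from \eqref{isogeny_from_relation}, and use \cite[Lemma A.22]{DGKM} to identify $\textup{Jac}_X$ with $\textup{Res}_{L/K}\textup{Jac}_{Y/L}$. Your explicit character check that $\Theta$ is a Brauer relation and your remark on the direction of the isogeny are just spelled-out versions of steps the paper takes for granted.
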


\begin{proof}
The function fields obtained by taking $C_2^c$ and $(C_2 \times C_2)$-invariance in Theorem \ref{Proposition:KT} are $K(x,\sqrt{d})$ and $K(x)$ respectively. Therefore, the quotients of $X$ by $C_2^c$ and $C_2 \times C_2$ are both curves of genus $0$. Therefore, the Brauer relation  $\Theta = \{e\}+2(C_2\times C_2) - C_2^a - C_2^b-C_2^c$ for $C_2 \times C_2$ induces an isogeny $\textup{Jac}_{X/K} \to \textup{Jac}_{Y/K} \times \textup{Jac}_{Y^d/K}$. By \cite[Lemma A.22]{DGKM}, it follows $\textup{Jac}_{X/K} = \textup{Res}_{L/K} \textup{Jac}_{Y/L}$ from which the result follows. \qedhere 
\end{proof}

\begin{theorem} \label{local_expression_KT} Let $Y/K$ be a hyperelliptic curve and  $Y^{d}/K$ the twist of $Y$ by $L=K(\sqrt{d})$. Then,

\[\textup{rk}_{2} \ \textup{Jac}_{Y}/K + \textup{rk}_{2} \ \textup{Jac}_{Y^d}/K \equiv \sum_{v \ \textup{place of} \ K} \textup{ord}_{2} \ \Lambda_{\Theta}(X/K_{v}) \mod 2, \]
where ${\Theta}$ is the pseudo Brauer relation for $C_2\times C_2$ and $X$ afforded by Theorem \ref{pseudo_brauer_KT}.
\end{theorem}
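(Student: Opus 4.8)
The plan is to apply Theorem \ref{regulator_constant_rank_parity} directly, with $G = C_2 \times C_2$, the curve $X = {}_K(Y_L)$, the pseudo Brauer relation $\Theta = \{e\} + 2(C_2 \times C_2) - C_2^a - C_2^b - C_2^c$ furnished by Theorem \ref{pseudo_brauer_KT}, and the prime $p = 2$. The output of that theorem is the congruence $\sum_{\tau \in S_{\Theta,2}} \langle \tau, \mathcal{X}_2(\Jac_X) \rangle \equiv \sum_v \textup{ord}_2 \Lambda_\Theta(X/K_v) \bmod 2$, so the work is entirely on the left-hand side: one must first check the hypotheses of Theorem \ref{regulator_constant_rank_parity}, and then identify the left-hand sum with $\textup{rk}_2 \Jac_Y/K + \textup{rk}_2 \Jac_{Y^d}/K$.

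First I would verify the self-duality hypothesis on $\Omega^1(\Jac_X)$: since $C_2 \times C_2$ is abelian with all irreducible representations one-dimensional and defined over $\mathbb{Q}$, every $\mathbb{C}[G]$-representation is self-dual, so $\Omega^1(\Jac_X)$ is automatically self-dual (indeed realisable over $\mathbb{Q}$, which also legitimises Definition \ref{def:lemma:local_invariants}). Next I would determine $S_{\Theta,2}$. The group $C_2 \times C_2$ has four characters: the trivial one $\mathbf{1}$ and the three nontrivial ones $\epsilon_a, \epsilon_b, \epsilon_c$, where $\epsilon_a$ is trivial on $C_2^a$, etc.; all are self-dual, so $\mathcal{R}_G = \{\mathbf{1}, \epsilon_a, \epsilon_b, \epsilon_c\}$ together with sums of the form $\tau \oplus \tau$, which contribute nothing new to $S_{\Theta,2}$ by Lemma \ref{trivial_regulator_constant}(3). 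One computes $\mathcal{C}_\Theta(\mathbf{1})$: since $\Theta$ is a genuine Brauer relation and $\mathbf{1}$ is a direct summand of every $\textup{Ind}_{H}^G \mathbf{1}$, a standard computation (as in the dihedral case of \cite{MR2680426}) gives $\textup{ord}_2 \mathcal{C}_\Theta(\mathbf{1}) \equiv 0$, so $\mathbf{1} \notin S_{\Theta,2}$. For the nontrivial characters $\epsilon_a$: $\epsilon_a$ appears in $\textup{Ind}_{H}^G \mathbf{1}$ exactly when $H \subseteq C_2^a$, i.e. $H \in \{\{e\}, C_2^a\}$, so pairing $\Theta$ against $\epsilon_a$ gives coefficient $1 + 0 - 1 - 0 - 0 = 0$ from the $\{e\}, C_2^a$ terms and also $0$ from $C_2^b, C_2^c, 2(C_2\times C_2)$ — this confirms $\epsilon_a$ (and by symmetry $\epsilon_b, \epsilon_c$) contributes a $1\times 1$ regulator determinant, and the explicit computation with the pairing normalisation $\frac{1}{|H|}\Langle,\Rangle$ yields $\mathcal{C}_\Theta(\epsilon_a) \equiv 2 \bmod \mathbb{Q}^{\times 2}$, hence $\textup{ord}_2 \equiv 1$; thus $S_{\Theta,2} = \{\epsilon_a, \epsilon_b, \epsilon_c\}$ — though I should double-check whether one of $\epsilon_a,\epsilon_b,\epsilon_c$ corresponds to the genus-$0$ quotients $X/C_2^c$, $\mathbb{P}^1_x$ and therefore ought to be excluded; more precisely, the representation cutting out $\Jac_{X/C_2^c}$ inside $\Jac_X$ is trivial (genus $0$), so only $\epsilon_a, \epsilon_b$ occur in $V_\ell(\Jac_X)$, and correctly $S_{\Theta,2} = \{\epsilon_a, \epsilon_b\}$, matching the table in the excerpt.

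Then I would identify $\langle \epsilon_a, \mathcal{X}_2(\Jac_X) \rangle$ with $\textup{rk}_2 \Jac_Y/K$ and $\langle \epsilon_b, \mathcal{X}_2(\Jac_X) \rangle$ with $\textup{rk}_2 \Jac_{Y^d}/K$. The mechanism is that $\Jac_X = \Res_{L/K} \Jac_{Y/L}$ (by \cite[Lemma A.22]{DGKM}, as used in Theorem \ref{pseudo_brauer_KT}), and the isogeny $\Jac_X \to \Jac_Y \times \Jac_{Y^d}$ of Theorem \ref{pseudo_brauer_KT} identifies the $\epsilon_a$-isotypic part of $\mathcal{X}_2(\Jac_X)$ with $\mathcal{X}_2(\Jac_{X/C_2^a}) = \mathcal{X}_2(\Jac_Y)$ and the $\epsilon_b$-part with $\mathcal{X}_2(\Jac_{Y^d})$; since isogenies preserve $\mathcal{X}_p \otimes \mathbb{Q}_p$, taking dimensions gives $\langle \epsilon_a, \mathcal{X}_2(\Jac_X)\rangle = \textup{rk}_2\Jac_Y$ and likewise for $\epsilon_b$. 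Summing over $S_{\Theta,2} = \{\epsilon_a, \epsilon_b\}$ yields the left-hand side $\textup{rk}_2 \Jac_Y/K + \textup{rk}_2 \Jac_{Y^d}/K$, and Theorem \ref{regulator_constant_rank_parity} gives the claimed congruence. The main obstacle is the bookkeeping that pins down $S_{\Theta,2}$ exactly — in particular correctly accounting for the two genus-$0$ quotients $X/C_2^c$ and $\mathbb{P}^1_x$, which kill the contributions of $\mathbf{1}$ and of $\epsilon_c$ and leave precisely $\{\epsilon_a, \epsilon_b\}$ — and then cleanly matching the two surviving characters to the Selmer ranks of $Y$ and its twist $Y^d$ via the Weil-restriction decomposition; both steps are routine but require care with the character-theoretic identifications.
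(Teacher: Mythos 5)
Your proposal is correct and follows essentially the same route as the paper: determine $S_{\Theta,2}=\{\epsilon_a,\epsilon_b\}$ from the regulator constants $\mathcal{C}_{\Theta}(\epsilon_a)\equiv\mathcal{C}_{\Theta}(\epsilon_b)\equiv 2$ together with the fact that only $\epsilon_a,\epsilon_b$ occur in $\mathcal{X}_2(\textup{Jac}_X)$ (with multiplicities $\textup{rk}_2\,\textup{Jac}_Y$ and $\textup{rk}_2\,\textup{Jac}_{Y^d}$, read off from the quotient curves), then apply Theorem \ref{regulator_constant_rank_parity} with $p=2$; your extra check of self-duality of $\Omega^1$ is implicit in the paper via the remark following that theorem. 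One intermediate claim is wrong, though harmlessly so: $\textup{ord}_2\,\mathcal{C}_{\Theta}(\mathbf{1})$ is not $0$ — with the normalisation $\frac{1}{|H|}\Langle\,,\Rangle$ one gets $\mathcal{C}_{\Theta}(\mathbf{1})=\frac{1\cdot\frac14\cdot\frac14}{(\frac12)^3}=\frac12\equiv 2\bmod\mathbb{Q}^{\times 2}$, so its order at $2$ is odd. The trivial character is excluded from $S_{\Theta,2}$ not by its regulator constant but because Definition \ref{def_special_subset} also demands $\langle\rho,\mathcal{X}_2\rangle>0$, and $\langle\mathbf{1},\mathcal{X}_2\rangle=\dim\mathcal{X}_2^{C_2\times C_2}=\textup{rk}_2\,\textup{Jac}_{\mathbb{P}^1_x}=0$ (just as the genus-$0$ quotient $X/C_2^c$ kills $\epsilon_c$) — which is exactly the observation your final paragraph relies on, so the sum over $S_{\Theta,2}$ and the resulting congruence are unaffected.
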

\begin{proof}

 We write $\mathcal{X}_{2}$ to denote $\mathcal{X}_{2}(\textup{Jac}_{X})$. We write $\epsilon_a$ and $\epsilon_b$ to denote non-identity characters of $C_2 \times C_2$ on which $C_2^a$ and $C_2^b$ act trivially. It follows that $\mathcal{X}_{2}\cong \epsilon_a^{\textup{rk}_{2}\textup{Jac}_{Y}} \oplus \epsilon_b^{\textup{rk}_{2}\textup{Jac}_{Y^{d}}}$. Therefore, $S_{\Theta,2}\subseteq \{ \epsilon_a, \epsilon_b\}$ (see Definition \ref{def_special_subset}). An elementary calculation reveals that $\mathcal{C}_{\Theta}(\epsilon_a) \equiv \mathcal{C}_{\Theta}(\epsilon_b) \equiv 2 \mod \mathbb{Q}^{\times 2}$ from which we deduce $S_{\Theta,2}=\{  \epsilon_a, \epsilon_b \}.$ 
The local expression asserted by the claim follows upon applying Theorem \ref{regulator_constant_rank_parity} to $\Theta$ afforded by Theorem \ref{pseudo_brauer_KT} with $p=2$.  \qedhere

\end{proof}
\section{Richelot Isogeny}\label{Sec_Richelot}
In this section, we use Brauer relations and regulator constants to deduce a local expression for the rank parity of the Jacobian of a genus $2$ curve $\textup{Jac}_{X}$ admitting a Richelot isogeny under some mild assumption on the Galois group of its $2$-torsion field (see Definition \ref{def:C2D4_curve} below). We first collect some useful yet well-known facts concerning Richelot isogenies, see for example \cite{Bruin} and \cite{correspondence} for details.

\begin{lemma} \label{existence_richelot} Let $Z/K$ be genus $2$ curve with affine Weierstrass model $y^2 = F(x)$, where $F(x)$ is a square-free polynomial of degree $6$ admitting a factorisation \[ F(x)= F_1(x)F_2(x)F_3(x) \] into 3 coprime quadratics subject to the condition that the set $\{F_1,F_2,F_3\} $ is stable under the action of $\textup{Gal}(\bar{K}/K)$. Then, there exists an isogeny $ \phi: \textup{Jac}_{Z} \to A$, where $A$ is a principally polarised abelian variety such that $\phi^{\vee} \circ \phi = [2]_{\textup{Jac}_{Z}}.$
\end{lemma}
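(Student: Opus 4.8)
The plan is to construct the Richelot isogeny explicitly and verify directly that the composite of $\phi$ with its dual is multiplication by $2$. First I would set up the target abelian variety: given the factorization $F = F_1 F_2 F_3$, one forms three quadratics (or a related quadratic combination) and builds the ``Richelot dual'' genus $2$ curve $\widehat{Z}$ with model $\widehat{y}^2 = \widehat{F}(x)$, where $\widehat{F} = \widehat{F}_1 \widehat{F}_2 \widehat{F}_3$ and $\widehat{F}_i = F_j' F_k - F_j F_k'$ (indices a cyclic permutation of $1,2,3$), up to the standard normalizing constant $\Delta = \det(\text{coefficients of } F_1,F_2,F_3)$. The key point is that the Galois-stability of the set $\{F_1,F_2,F_3\}$ ensures the unordered triple $\{\widehat{F}_1,\widehat{F}_2,\widehat{F}_3\}$, hence the curve $\widehat{Z}$ and the correspondence below, are all defined over $K$ rather than merely over the splitting field of the permutation action; this is where the hypothesis is used. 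I would then set $A = \mathrm{Jac}_{\widehat{Z}}$, which carries a canonical principal polarization.

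The isogeny $\phi$ itself is induced by an explicit correspondence $\mathcal{C} \subset Z \times \widehat{Z}$, namely the curve cut out by the equations $F_1(x)\widehat{F}_1(u) + F_2(x)\widehat{F}_2(u) = 0$ together with $y\widehat{y} = F_1(x)\widehat{F}_1(u)(x-u)$ (the precise normalization can be extracted from \cite{Bruin}, \cite{correspondence}); pullback-pushforward along the two projections $\mathcal{C} \to Z$, $\mathcal{C} \to \widehat{Z}$ defines $\phi: \mathrm{Jac}_Z \to \mathrm{Jac}_{\widehat{Z}}$ on divisor classes. The steps are then: (i) check the correspondence is defined over $K$ using the Galois-stability hypothesis; (ii) identify $\ker\phi$ — it is the subgroup of $\mathrm{Jac}_Z[2]$ generated by the classes of differences of Weierstrass points grouped according to the partition $\{F_1,F_2,F_3\}$, a maximal isotropic subgroup of $\mathrm{Jac}_Z[2]$ for the Weil pairing; (iii) deduce that $\phi$ is a $(2,2)$-isogeny and that the polarization on $\mathrm{Jac}_Z$ pulled back from the principal polarization on $\mathrm{Jac}_{\widehat{Z}}$ via $\phi$ equals twice the canonical one, which is exactly the identity $\phi^\vee \circ \phi = [2]$.

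The main obstacle I anticipate is item (iii): verifying that $\ker\phi$ is isotropic for the Weil pairing and that the induced polarization is precisely $[2]$ rather than some other multiple, without simply quoting the classical literature. Concretely one must compute the Weil pairing on the $2$-torsion classes $\{[P_i - P_j]\}$ arising from the roots of $F_1, F_2, F_3$ and confirm the partition into three pairs of quadratics yields a Lagrangian subspace; the ``dual'' pairing on $\mathrm{Jac}_{\widehat{Z}}[2]$ then matches via the correspondence. Since the statement only asserts existence of such a $\phi$ and $A$, the cleanest route is: construct $\widehat{Z}$ over $K$ as above (the descent being the only genuinely new input), invoke the classical computation (e.g. \cite{Bruin}) that over $\overline{K}$ the correspondence induces an isogeny with $\phi^\vee\circ\phi = [2]$, and observe that both the isogeny and the relation are Galois-stable hence descend to $K$. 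I would present it in this order: recall the $\overline{K}$-construction, point out Galois-stability gives a model over $K$, then quote/sketch the $[2]$-relation.
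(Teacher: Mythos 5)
Your route is genuinely different from the paper's, which proves this lemma by citation alone (to \cite[Proposition 8.2.3]{correspondence}); your explicit construction is essentially the classical Richelot picture that the paper only recalls later (Definition \ref{target_of_isogeny}, Definition \ref{correspondence_definition}). However, there are two concrete problems. First, your rationality claim in step (i) is not correct as stated. When $F_1,F_2$ are Galois conjugate over $K(\sqrt d)$, the triple $\{G_1,G_2,G_3\}$ (with the determinant normalisation) is indeed Galois-stable, so the curve $\widehat Z$ has a $K$-model; but the correspondence you wrote down is not Galois-stable: conjugation swaps $F_1G_1 \leftrightarrow F_2G_2$, so it carries the equation $F_1(x)\widehat F_1(u)(x-u)=y\widehat y$ to $F_2(x)\widehat F_2(u)(x-u)=y\widehat y$, which, given $F_1\widehat F_1+F_2\widehat F_2=0$, is the image of your correspondence under $\widehat y\mapsto-\widehat y$. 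Hence the induced map satisfies $\sigma(\phi)=-\phi$ and is only defined over $K(\sqrt d)$; one must replace the target by its quadratic twist by $d$ — this is exactly the role of $\sqrt d$ and of the twisted model $\tilde Z_d$ in the paper's Definition \ref{correspondence_definition} and Remark \ref{involutions}. For the bare existence statement this is repairable (the twisted Jacobian is still principally polarised over $K$), but as written the step fails.

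Second, and more serious relative to the statement being proved: the lemma carries no non-singularity hypothesis on the quadratic splitting, whereas your choice $A=\textup{Jac}_{\widehat Z}$ only makes sense when $\widehat F=\widehat F_1\widehat F_2\widehat F_3$ is square-free, i.e. when the splitting is non-singular in the sense of \cite[Definition 8.2.4]{correspondence} (in particular $\Delta\neq 0$). The paper deliberately invokes the dual-curve description only after the lemma, under that extra hypothesis; in the degenerate case the Richelot dual genus $2$ curve does not exist and your construction breaks down, even though the asserted isogeny still exists. The uniform argument — which also absorbs your steps (ii)–(iii) — is to let $H\subset\textup{Jac}_Z[2]$ be the subgroup generated by the classes of differences of Weierstrass points paired according to the roots of each $F_i$: this $H$ is maximal isotropic for the Weil pairing and is Galois-stable precisely because the set $\{F_1,F_2,F_3\}$ is, so $A=\textup{Jac}_Z/H$ is defined over $K$ and the principal polarisation descends through the quotient, giving $\phi^{\vee}\circ\phi=[2]$ with no non-degeneracy assumption. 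That is in effect what the cited \cite[Proposition 8.2.3]{correspondence} supplies; if you keep the explicit correspondence route you must both restrict to (or separately treat) the non-singular case and correct the twist in step (i).
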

\begin{proof}
See \cite[Proposition 8.2.3]{correspondence}
\end{proof}

We say that $\{F_1,F_2,F_3\}$ is a \textit{quadratic splitting} for $Z$. In view of \cite[Lemma 4.1]{Bruin}, a Jacobian of a genus $2$ curve admits a $(2,2)$-isogeny only when $Z$ admits a quadratic splitting. For the remainder of this section, we will assume that $\textup{Gal}(K(\textup{Jax}_{Z}[2])/K)$ is a $2$-group. Since $\textup{Gal}(K(\textup{Jax}_{Z}[2])/K)=\textup{Gal}(F)$, this assumption is equivalent to $\textup{Gal}(F) \leq D_8 \times C_2$, where $D_8$ denotes the dihedral group of order $8$. Following the terminology introduced in \cite{DM2019}, we refer to these curves as \textit{C2D4 curves} (albeit acknowledging the discrepancy in notation regarding dihedral groups.).

\begin{definition}\label{def:C2D4_curve} Let $Z/K$ be a genus $2$ curve admitting a quadratic splitting $\{F_1,F_2,F_3\}$. We say that $Z/K$ is a \textit{C2D4 curve} if $\textup{Gal}(F_1F_2F_3) \subseteq D_{8} \times C_{2}$ as a permutation group on $6$ roots, in which case the two factors $D_{8}$ and $C_{2}$ act separately on $4$ and $2$ roots respectively. 
\end{definition} 

\begin{convention} We note that the individual quadratic factors $F_i$ need not be defined over $K$. In particular, under the assumption that $Z$ is a $C2D4$ curve, then two of the quadratic factors could be Galois conjugate over a quadratic field extension $L=K(\sqrt{d})$. When this happens, we adhere to the convention that $F_1$ and $F_2$ are Galois conjugate over $K(\sqrt{d})$, while $F_3$ is necessarily defined over $K$. 
\end{convention}

\begin{notation} \label{choice_of_d} For the remainder of this section, for any $C2D4$ curve $Z$ we fix the following constant $d=d(Z)\in K^{\times}$. If $Z$ admits a quadratic splitting in which $2$ quadratic factors are Galois conjugate over a quadratic extension $L/K$, then we fix $d\in K^{\times}$ to be any generator for $L=K(\sqrt{d})$. If all quadratic factors are defined over $K$, then we fix $d=1$. 
\end{notation}

If we assume that the quadratic splitting is non-singular in the sense of 
\cite[Definition 8.2.4]{correspondence}, then \cite[Proposition 4.3]{Bruin} asserts that the target of this isogeny is the Jacobian of a $C2D4$ curve with an affine model $\tilde{Z}_{d}=\{dy^2=G_1(x)G_2(x)G_3(x) \}$. The polynomials $G_i$ are defined as follows.
\begin{definition} \label{target_of_isogeny} Let $\{F_1,F_2,F_3\}$ be a non-singular quadratic splitting for a genus $2$ curve, and let $F_i(x) = \sum_{j=0}^{2} f_{i,j}x^{j}$. Then, for $(i,j,k)=(1,2,3), (2,3,1), (3,1,2)$, we define 
\[G_{i}({x}) = 
\det\begin{pmatrix}
f_{1,0} & f_{1,1} & f_{1,2}\\
f_{2,0} & f_{2,1} & f_{2,2}\\
f_{3,0} & f_{3,1} & f_{3,2}
\end{pmatrix}^{-1}  \cdot \bigg{(} F_k(x)\frac{\mathrm{d}}{\mathrm{d}x}F_j(x) - F_j(x)\frac{\mathrm{d}}{\mathrm{d}x}F_k(x) \bigg{)}.  \] 
\end{definition}

A classical way of deducing the existence of an isogeny $\textup{Jac}_{Z} \to \textup{Jac}_{\tilde{Z}_{d}}$ is via a $(2,2)$-correspondence $\Gamma_{d}$ between $Z$ and $\tilde{Z}_{d}$.  This is defined as follows.

\begin{definition}\label{correspondence_definition} Let $Z/K$ be a genus $2$ curve admitting a non-singular quadratic splitting $\{F_1,F_2,F_3\}$. By expressing $\tilde{Z}_{d}$ in terms of $(\tilde{x},\tilde{y})$ instead of $(x,y)$, the \textit{Richelot correspondence} on $Z$ and $\tilde{Z}_{d}$ is the curve $\subseteq Z \times \tilde{Z}_{d}$ given by the equations
\begin{equation*}
  \Gamma_{d}:\begin{cases}
    \hfill F_{1}(x)G_{1}(\tilde{x}) + F_{2}(x)G_{2}(\tilde{x}) &= 0\\
   \hfill  F_{1}(x)G_{1}(\tilde{x})(x-\tilde{x}) &=\sqrt{d}\tilde{y}y\\
      \hfill  F_{2}(x)G_{2}(\tilde{x})(x-\tilde{x}) &=-\sqrt{d}\tilde{y}y\\
  \end{cases} 
  \end{equation*} 
  
\end{definition}
\begin{remark} \label{involutions}
The projection morphisms
\begin{alignat*}{2}
\pi_1: \Gamma_d &\to Z  \hspace{1.5cm} \textup{and} \hspace{1.5cm} \pi_2: \Gamma_d &&\to \tilde{Z}_{d} \\
(x,y,\tilde{x},\tilde{y}) &\mapsto (x,y) \hspace{3cm} (x,y,\tilde{x},\tilde{y}) &&\mapsto(\tilde{x},\tilde{y})
\end{alignat*}
induce an isogeny $\textup{Jac}_{Z} \to \textup{Jac}_{\tilde{Z}_{d}}$ via the composition $(\pi_2)_{*}\pi_1^{*}$. It is important to observe that the specific choice of $d$ as indicated in Notation \ref{choice_of_d} ensures that the induced isogeny is defined over $K$. This is because both $\Gamma_d$ and the $\pi_i$ are defined over $K$ when this choice is made. This need not be true for an arbitrary choice of $d$.
\end{remark}
In the following Proposition, we utilise the Richelot correspondence to prove pseudo Brauer verifiability for a Richelot isogeny between $C2D4$ curves. As always, we write $\mathbb{P}^{1}_{x}$ to denote $\mathbb{P}^{1}$ together with a choice of $x$ which generates its function field over $K$. 
\begin{proposition} \label{automorphisms_on_richelot} Let $Z/K$ be a $C2D4$ curve, and suppose that $\{F_1,F_2,F_3\}$ is a non-singular quadratic splitting for it. Suppose (as we may) that $F_1$ and $F_2$ are {monic} polynomials given by $F_i(x)=x^2+f_{i,1}x+f_{i,2}$. Then, define
\begin{equation*}{T}(x) = \begin{cases} \frac{-f_{1,2}f_{2,1}+f_{1,1}f_{2,2}+(f_{1,1}-f_{2,1})x^2}{(f_{1,2}-f_{2,2})+(f_{1,1}-f_{2,1})x},& \text{if } f_{1,1} \neq f_{2,1},\\
x(f_{1,1}+x),& \text{if } f_{1,1} = f_{2,1}.
\end{cases}
\end{equation*}
Then, it follows that
\begin{enumerate}
\item $T(x)$ is a $K$-rational function,
\item The function field extension $K(T) \subseteq K(x) \subseteq K(Z) \subseteq K(\Gamma_d)$ is Galois with Galois group $D_8 = \langle s, r \ | \ r^4=s^2=1, rs=r^3s \rangle$,
\item Assuming (as we may) that $Z=X/\langle s \rangle$, then these function field inclusions fit into a Galois diagram
\end{enumerate}
\begin{center}
\begin{tikzcd}
                                       & \Gamma_d \arrow[ld, no head] \arrow[d, no head] \arrow[rd, no head]                &                                                \\
Z =X/\langle s \rangle \arrow[d, no head]                   & \Gamma_d/\langle r^2 \rangle \arrow[ld, no head] \arrow[d, no head] \arrow[rd, no head] & \tilde{Z}_{d}=\Gamma_d/\langle sr \rangle \arrow[d, no head]        \\
\mathbb{P}^{1}_{x}=\Gamma_d/\langle s , r^2 \rangle \arrow[rd, no head] & \Gamma_d/\langle r \rangle \arrow[d, no head]                                           & \mathbb{P}^{1}_{\tilde{x}}={\Gamma_d/\langle sr,r^2 \rangle} \arrow[ld, no head] \\
                                       & \mathbb{P}^{1}_{{T}}=\Gamma_d/D_{8}                                                              &                                               
\end{tikzcd}
\end{center}
\end{proposition}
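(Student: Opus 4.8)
The plan is to exhibit an explicit $D_8$-action on the function field $K(\Gamma_d)$ and then identify all the quotient curves. First I would address (1): $T(x)$ is manifestly a ratio of polynomials with coefficients in the subfield of $K$ generated by the $f_{i,j}$, so it lies in $K(x)$; in the case $f_{1,1}=f_{2,1}$ the formula degenerates and one simply checks it directly. For (2), the key computation is to produce the two automorphisms. The involution $s$ is already fixed by the hypothesis $Z = X/\langle s\rangle$: it is the hyperelliptic involution $y\mapsto -y$ on $\Gamma_d$ lifted through $\pi_1$, which on $K(\Gamma_d)$ fixes $K(x)$ and swaps the sign of $y$ (and correspondingly acts on $\tilde y$ via the correspondence equations in Definition \ref{correspondence_definition}). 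The order-$4$ element $r$ should be built so that $r^2$ is the hyperelliptic involution $\tilde y\mapsto -\tilde y$ on the $\tilde Z_d$ side (so that $\Gamma_d/\langle r^2\rangle$ is the curve sitting below both hyperelliptic quotients) and so that $r$ implements the involution $x\mapsto$ (the Galois conjugate root-pairing swapping $F_1\leftrightarrow F_2$) on $K(x)$, the fixed field of which is exactly $K(T)$. The natural candidate: $r$ acts on $K(x)$ by the Möbius transformation $\mu$ with $\mu^2=\mathrm{id}$ whose fixed field is $K(T)$ — one verifies $T\circ\mu = T$ — composed with the hyperelliptic involution on the $\tilde x,\tilde y$-coordinates, extended to all of $K(\Gamma_d)$ using the defining equations of $\Gamma_d$ to see it is well-defined and has the right order. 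Checking $r^4 = s^2 = 1$ and the braid relation $rs = r^3 s$ is then a finite computation in the function field, and transitivity/faithfulness gives $\mathrm{Gal}(K(\Gamma_d)/K(T)) = D_8$ since $[K(\Gamma_d):K(T)] = 2\cdot 2\cdot 2 = 8$ (the tower $K(T)\subseteq K(x)\subseteq K(Z)\subseteq K(\Gamma_d)$ has each step of degree $2$).

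For (3), once the $D_8$-action is in place, the Galois diagram is pure subgroup bookkeeping: the lattice of subgroups of $D_8 = \langle r,s\rangle$ of index $2$ and $4$ is completely standard, and I would match each intermediate field to its fixing subgroup. Concretely: $\langle s\rangle$ fixes $K(Z)$ by construction; $\langle sr\rangle$ fixes $K(\tilde Z_d)$ because $sr$ acts as the hyperelliptic involution on the $(\tilde x,\tilde y)$ side (this is where the asymmetry between $Z$ and $\tilde Z_d$ in the diagram comes from — they sit over different order-$2$ subgroups); $\langle r^2\rangle$ fixes $K(\Gamma_d/\langle r^2\rangle)$, the common quotient; $\langle s,r^2\rangle$ fixes $K(x) = \mathbb{P}^1_x$, the hyperelliptic $x$-line of $Z$; $\langle sr, r^2\rangle$ fixes $\mathbb{P}^1_{\tilde x}$; $\langle r\rangle$ fixes the remaining index-$2$ intermediate curve; and $D_8$ itself fixes $\mathbb{P}^1_T$. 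That these quotients are the curves $\mathbb{P}^1_x$, $\mathbb{P}^1_{\tilde x}$, $\mathbb{P}^1_T$ (genus $0$, with the indicated generators of their function fields) follows from the degree count plus the fact that $x$, $\tilde x$, $T$ are the invariants of the corresponding involutions; I would invoke the correspondence equations to confirm $\Gamma_d/\langle s,r^2\rangle$ really is generated by $x$ and similarly for $\tilde x$.

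The main obstacle I anticipate is pinning down the order-$4$ automorphism $r$ and verifying it is a well-defined $K$-automorphism of $\Gamma_d$ of exact order $4$ satisfying the $D_8$-relations with $s$ — in particular confirming that the root-swap Möbius transformation on $x$ genuinely extends to $K(\Gamma_d)$ via the system in Definition \ref{correspondence_definition} without introducing an inconsistency or unexpected $\sqrt d$ (the constant $d$ from Notation \ref{choice_of_d} was chosen precisely to keep everything $K$-rational, so the bookkeeping of signs and of $\sqrt d$ in the three defining equations of $\Gamma_d$ will need care). I would handle this by working symmetrically in the pair $(F_1,F_2)$: the transformation $\mu$ on $x$ is the unique non-trivial element of the Klein four-group of Möbius maps permuting the four roots of $F_1 F_2$ that swaps the two roots of $F_1$ with the two roots of $F_2$, so $T$ — being a degree-$2$ rational function invariant under $\mu$ — is forced, and its $\mu$-invariance is the identity one checks. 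Once $r$ is correctly defined, everything else is routine.
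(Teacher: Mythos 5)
Your overall strategy (exhibit an explicit $D_8$-action by writing down generators $s$ and $r$, then read off the quotients from the subgroup lattice) differs from the paper's, but as written it has concrete errors, and the step you yourself flag as the main obstacle is precisely the one the paper's argument is built to avoid. First, part (1) is not ``manifest'': when $F_1$ and $F_2$ are Galois conjugate over $K(\sqrt d)$ the coefficients $f_{i,j}$ do not lie in $K$, and the content of (1) is that the numerator and denominator of $T$ are each anti-symmetric under $f_{1,j}\leftrightarrow f_{2,j}$, so $T$ is fixed by $\mathrm{Gal}(K(\sqrt d)/K)$; your argument implicitly assumes the split case. Second, your generators are misidentified. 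If $Z=\Gamma_d/\langle s\rangle$ then $s$ must fix $K(Z)=K(x,y)$, i.e.\ fix both $x$ and $y$ and move $(\tilde x,\tilde y)$: it is the covering involution of $\pi_1$, not a lift of $y\mapsto -y$. More seriously, the Möbius involution $\mu=r|_{K(x)}$ cannot be the map exchanging the roots of $F_1$ with those of $F_2$: rewriting $T=\frac{f_{2,1}F_1(x)-f_{1,1}F_2(x)}{F_2(x)-F_1(x)}$ shows $T\equiv -f_{1,1}$ on the roots of $F_1$ and $T\equiv -f_{2,1}$ on the roots of $F_2$ (similarly in the degenerate case), so the involution satisfying $T\circ\mu=T$ preserves each root-pair and swaps within it; the ``identity one checks'' fails for your candidate $\mu$ (which is in any case not the unique element of the Klein four-group exchanging the two pairs).

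Third, and most importantly, the existence of a well-defined order-$4$ automorphism $r$ of $K(\Gamma_d)$ satisfying the dihedral relation with $s$ is never established; you defer it to ``a finite computation'', but that computation is essentially the whole proposition. The paper sidesteps it entirely: it takes the three involutions $g_1,g_2,g_3$ fixing the three quadratic subfields $K(x,y)$, $K(x,\tilde x)$, $K(\tilde x,\sqrt d\,\tilde y)$ of $K(\Gamma_d)$, uses the first correspondence equation $F_1(x)/F_2(x)=-G_2(\tilde x)/G_1(\tilde x)$ to show $T$ is simultaneously a function of $x$ and of $\tilde x$, hence fixed by $H=\langle g_1,g_2,g_3\rangle$; then $[K(\Gamma_d):K(T)]=8$ together with $g_3(x)\neq x$ forces $|H|=8$ and $K(\Gamma_d)^H=K(T)$, and the isomorphism type $D_8$ is pinned down by showing the discriminant curve of the quartic map $Z\to\mathbb{P}^1_T$ is a nontrivial cover, branched at the two distinct points $T(w_1)\neq T(w_2)$ for $w_1,w_2$ the roots of $F_3$ (this is where non-singularity of the splitting is used). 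Your proposal invokes neither the correspondence identity nor the non-singularity hypothesis, and without them (or a completed explicit construction of $r$, with the dihedral relation verified) neither Galoisness over $K(T)$ nor the identification of the group as $D_8$ rather than an abelian group of order $8$ is established.
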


\begin{proof}
If the individual $F_i$ are defined over $K$, then (1) is clear. If $F_1$ and $F_2$ are quadratic conjugate over $K(\sqrt{d})$, then the non-trivial element $\sigma \in \textup{Gal}(K(\sqrt{d})/K)$ acts on the coefficients by permuting $f_{1,j} \leftrightarrow f_{2,j}$. Since $T(x)$ is $\sigma$-stable, then $T(x)$ is $K$-rational. This completes the proof of $(1)$.

In order to prove $(2)$, we first fix explicit generators for the function field of $\Gamma_d$. We fix $\tilde{y} \in \overline{K(\tilde{x})}$ by letting $\tilde{y}^2=G_1(\tilde{x})G_2(\tilde{x})G_3(\tilde{x})$. Then, by the description of $\Gamma_d$ given in Definition \ref{correspondence_definition}, we deduce that $K(\Gamma_d) = K(x,y,\tilde{x},\sqrt{d}\tilde{y})$. 
It follows that all $3$ function fields $K(x,y)=K(Z), K(x,\tilde{x})$ and $ \ K(\tilde{x},\sqrt{d}\tilde{y})=K(\tilde{Z}_{d})$  admit degree $2$ embeddings into $K(\Gamma_d)$.  These correspond to three involutions on $\Gamma_d$, say $g_1,g_2$ and $g_3$ respectively, and let $H = \langle g_1,g_2,g_3 \rangle$. Since $g_1$ and $g_2$ fix $x$, then they fix $T(x)$. Adding on, one checks that
\begin{equation*}
T(x) = \begin{cases} \frac{f_{2,1} {F_1(x)} - f_{1,1}{F_2(x)}}{{F_2(x)}-{F_1(x)}},& \text{if } f_{1,1} \neq f_{2,1}\\
\frac{f_{2,2} {F_1(x)} - f_{1,2}{F_2(x)}}{{F_2(x)}-{F_1(x)}},& \text{if } f_{1,1} = f_{2,1}.
\end{cases} 
\end{equation*}
Since $\frac{F_1(x)}{F_2(x)}=\frac{-G_2(\tilde{x})}{G_1(\tilde{x})}$, we deduce that $T$ is also a function in $\tilde{x}$, and since $g_3$ fixes $\tilde{x}$, then it must fix $T$. This shows that $ K(T)  \subseteq K(\Gamma_{d})^{H}$, and since $[K(\Gamma_{d}):K(T)]=8$, we deduce that $|H| \leq 8$. Since $K(x)$ isn't fixed by $H$ (since $g_3$ doesn't fix $x$), we deduce that $|H|> 4$. Therefore $|H|=8$ and in addition $K(\Gamma_{d})^{H} = K(T)$.

The claim in (2) will follow upon showing that the Galois closure of $Z \to \mathbb{P}^{1}_{x} \to \mathbb{P}^{1}_{T}$ is $D_8$. We write $W$ to denote the discriminant curve (see \cite[Lemma A.7]{DGKM} or  \cite[\S 2.2]{donagi}) of $Z \to \mathbb{P}^{1}_{T}$. It suffices to show that $W \to \mathbb{P}^{1}_{T}$ is not the trivial cover. By using \cite[Lemma 2.3]{donagi}, we deduce that $W \to \mathbb{P}^{1}_{T}$ is branched at $2$ distinct points, namely $T(w_1)$ and $T(w_2)$ where $w_1$ and $w_2$ are the two roots of the remaining quadratic factor $F_3(x)$. It's easy to check that $T(w_1) \neq T(w_2)$ (as otherwise either $Z$ itself or the quadratic splitting would be singular). This completes the proof of (2). Claim (3) follows from the proof of (2).
 \qedhere \end{proof}

\begin{notation}We write $C_2^a$ and $C_2^b$ to denote the two non-central subgroups of $D_8$ of order $2$. In addition, write $(C_2^2)^a$ and $(C_2^2)^b$ to denote the two subgroups of $D_8$ which contain $C_2^a$ and $C_2^b$ respectively. \end{notation}
\begin{theorem} \label{pseudo_brauer_richelot}
Let $Z/K$ be a $C2D4$ curve which admits a non-singular, quadratic splitting. Then, the pseudo Brauer relation $C_2^a-C_2^b+(C_2^2)^b-(C_2^2)^a$ for $D_8$ and $\Gamma_d$ verifies a Richelot isogeny $\textup{Jac}_{Z} \to \textup{Jac}_{\tilde{Z}_{d}}$.
\end{theorem}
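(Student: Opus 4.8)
The plan is to leverage Proposition \ref{automorphisms_on_richelot}, which has already set up the $D_8$-action on $\Gamma_d$ and identified the relevant quotient curves. The key observation is that $\Theta = C_2^a - C_2^b + (C_2^2)^b - (C_2^2)^a$ is an actual Brauer relation for $D_8$: indeed, $\mathrm{Ind}_{C_2^a}^{D_8}\mathbf{1} \oplus \mathrm{Ind}_{(C_2^2)^b}^{D_8}\mathbf{1} \cong \mathrm{Ind}_{C_2^b}^{D_8}\mathbf{1} \oplus \mathrm{Ind}_{(C_2^2)^a}^{D_8}\mathbf{1}$, which one checks by comparing characters (both sides decompose into the same multiset of irreducibles of $D_8$; this is the standard $D_8$ Brauer relation). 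By Remark \ref{brauer_verfiaibility}, any Brauer relation for $G$ is a pseudo Brauer relation for $G$ acting on any curve, so $\Theta$ is a pseudo Brauer relation for $D_8$ and $\Gamma_d$, and by the Kani--Rosen theorem it induces an isogeny $\prod \mathrm{Jac}_{\Gamma_d/H_j'} \to \prod \mathrm{Jac}_{\Gamma_d/H_i}$.

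Next I would match up the quotient curves with the objects in the claimed isogeny $\mathrm{Jac}_Z \to \mathrm{Jac}_{\tilde Z_d}$. From the Galois diagram in Proposition \ref{automorphisms_on_richelot}(3), together with the identifications $Z = \Gamma_d/\langle s\rangle$ and $\tilde Z_d = \Gamma_d/\langle sr\rangle$, I need to check that $C_2^a$ and $C_2^b$ (the two non-central order-$2$ subgroups of $D_8$) are precisely $\langle s\rangle$ and $\langle sr\rangle$ in some order, so that $\Gamma_d/C_2^a$ and $\Gamma_d/C_2^b$ are $Z$ and $\tilde Z_d$. Then I must verify that the quotients by the two Klein-four subgroups $(C_2^2)^a \supseteq C_2^a$ and $(C_2^2)^b \supseteq C_2^b$ are curves of genus $0$ — these should be $\mathbb{P}^1_x = \Gamma_d/\langle s, r^2\rangle$ and $\mathbb{P}^1_{\tilde x} = \Gamma_d/\langle sr, r^2\rangle$, which are genus $0$ by construction. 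Since $\mathrm{Jac}$ of a genus $0$ curve is trivial, the isogeny supplied by the Brauer relation collapses to $\mathrm{Jac}_{\Gamma_d/C_2^b} \to \mathrm{Jac}_{\Gamma_d/C_2^a}$, i.e. (up to relabelling $a \leftrightarrow b$) an isogeny $\mathrm{Jac}_{\tilde Z_d} \to \mathrm{Jac}_Z$, or dually $\mathrm{Jac}_Z \to \mathrm{Jac}_{\tilde Z_d}$. With conditions (1) and (2) of Definition \ref{brauer_verifiability_definition} thus verified, pseudo Brauer verifiability follows.

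The main obstacle I anticipate is the bookkeeping in matching the abstract subgroups $C_2^a, C_2^b, (C_2^2)^a, (C_2^2)^b$ of $D_8$ with the concrete subgroups $\langle s\rangle, \langle sr\rangle, \langle s, r^2\rangle, \langle sr, r^2\rangle$ appearing in the diagram, and confirming that $\Theta$ with this labelling is genuinely the standard $D_8$ Brauer relation rather than a sign-flipped or reindexed version (the relation $\{e\} + 2D_8 - (C_2^2)^a - (C_2^2)^b - C_4$ is the more familiar one, so I should instead directly verify the stated four-term relation by a character computation, using that $\mathrm{Ind}_{C_2^a}^{D_8}\mathbf 1$ and $\mathrm{Ind}_{(C_2^2)^a}^{D_8}\mathbf 1$ differ exactly by one copy of the sign character associated to $(C_2^2)^a$, and similarly for $b$, and that these two sign characters are the two distinct non-trivial one-dimensional characters of $D_8$ with $r^2$ in the kernel but restricting nontrivially to the respective reflections — so the differences cancel). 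A secondary point requiring care is that $\Theta$ as written has the form $\sum H_i - \sum H_j'$ with the $Z$-term and $\tilde Z_d$-term on \emph{opposite} sides, so I should be careful about the direction of the isogeny and simply invoke that an isogeny and its dual exist simultaneously, which is all Definition \ref{brauer_verifiability_definition} requires. Everything else — non-singularity of the quadratic splitting guaranteeing $T(w_1) \neq T(w_2)$ and hence the full $D_8$ (not a proper subgroup), and the genus-$0$ claims — is already in place from Proposition \ref{automorphisms_on_richelot}.
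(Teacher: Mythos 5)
Your proposal is correct and follows essentially the same route as the paper: note that $C_2^a-C_2^b+(C_2^2)^b-(C_2^2)^a$ is a genuine Brauer relation for $D_8$ (hence pseudo Brauer for $D_8$ and $\Gamma_d$), match $C_2^a,C_2^b,(C_2^2)^a,(C_2^2)^b$ with $\langle s\rangle,\langle sr\rangle,\langle s,r^2\rangle,\langle sr,r^2\rangle$, and use the genus-$0$ quotients from Proposition \ref{automorphisms_on_richelot}(3) so that the induced isogeny collapses to one between $\textup{Jac}_{Z}$ and $\textup{Jac}_{\tilde{Z}_d}$. One small slip in your parenthetical character check: $\textup{Ind}_{C_2^a}^{D_8}\mathbf{1}$ and $\textup{Ind}_{(C_2^2)^a}^{D_8}\mathbf{1}$ differ by the two-dimensional irreducible $\rho$ (not by a one-dimensional sign character), and likewise for $b$, so the two differences still cancel and the relation holds as claimed.
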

\begin{proof} We note that $C_2^a-C_2^b+(C_2^2)^b-(C_2^2)^a$ is a Brauer relation for the group $D_8$. Following the notation from Proposition \ref{automorphisms_on_richelot}, we deduce that $(C_2^2)^a$ and $(C_2^2)^b$ coincide with the subgroups $\langle s,r^2 \rangle$ and $\langle sr, r^2\rangle.$ By Proposition \ref{automorphisms_on_richelot}(3), the quotients $\Gamma_d/(C_2^2)^a$ and $\Gamma_d/(C_2^2)^b$ are curves of genus $0$. This gives the required result.
\end{proof}
We write $\epsilon_{r}$ the denote the non-identity, $1$-dimensional character of $D_{8}$ on which $r$ acts trivially and $\rho$ for the irreducible $D_8$-representation of dimension $2$. 
\begin{lemma} \label{rep_strucutre} The $D_{8}$-representations  
\[ \mathcal{X}_{2}(\textup{Jac}_{\Gamma_d}) \ \ \textup{and} \ \  \epsilon_{r}^{\textup{rk}_{2 }\textup{Jac}_{\Gamma_d/\langle r^2 \rangle}} \oplus \rho^{\textup{rk}_{2}  \textup{Jac}_{Z}}\] become isomorphic after extending scalars to $\mathbb{C}$.
\end{lemma}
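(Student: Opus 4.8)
The plan is to pin down $\mathcal{X}_2(\textup{Jac}_{\Gamma_d})\otimes_{\mathbb{Q}_2}\mathbb{C}$ as a $\mathbb{C}[D_8]$-module by computing the multiplicity of each of its five irreducible constituents, as in the determination of the Selmer module carried out in the proof of Theorem \ref{local_expression_KT}. Write the four linear characters of $D_8$ as $\mathbf{1},\epsilon_r,\psi,\psi'$, where $\psi,\psi'$ are the two characters non-trivial on $r$, normalised so that $\psi$ is trivial on $(C_2^2)^a$ and $\psi'$ is trivial on $(C_2^2)^b$, and let $\rho$ be the $2$-dimensional irreducible. The one input I will use is the standard identification $\mathcal{X}_p(\textup{Jac}_{\Gamma_d/H})\cong\mathcal{X}_p(\textup{Jac}_{\Gamma_d})^H$ for $H\leq D_8$ (a consequence of the compatibility of $\mathcal{X}_p$ with the idempotent $\frac{1}{|H|}\sum_{h\in H}h\in\mathbb{Q}[\textup{Aut}_K(\Gamma_d)]$, cf. \cite{DGKM}), which by Frobenius reciprocity reads $\textup{rk}_p\textup{Jac}_{\Gamma_d/H}=\langle\mathbb{C}[D_8/H],\mathcal{X}_p(\textup{Jac}_{\Gamma_d})\rangle$.

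I then read off the multiplicities from the quotient curves in the diagram of Proposition \ref{automorphisms_on_richelot}(3). First, $\langle\mathbf{1},\mathcal{X}_2\rangle=\textup{rk}_2\textup{Jac}_{\mathbb{P}^1_T}=0$, since $\textup{Jac}_{\mathbb{P}^1}$ is trivial. Next, from $\textup{Ind}_{(C_2^2)^a}^{D_8}\mathbf{1}=\mathbf{1}\oplus\psi$ and $\textup{Ind}_{(C_2^2)^b}^{D_8}\mathbf{1}=\mathbf{1}\oplus\psi'$ together with the genus-zero quotients $\Gamma_d/(C_2^2)^a=\mathbb{P}^1_x$ and $\Gamma_d/(C_2^2)^b=\mathbb{P}^1_{\tilde x}$, I get $\langle\psi,\mathcal{X}_2\rangle=\langle\psi',\mathcal{X}_2\rangle=0$. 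A short character computation gives $\textup{Ind}_{C_2^a}^{D_8}\mathbf{1}=\mathbf{1}\oplus\psi\oplus\rho$, so from $Z=\Gamma_d/C_2^a$ I obtain $\textup{rk}_2\textup{Jac}_Z=\langle\mathbf{1},\mathcal{X}_2\rangle+\langle\psi,\mathcal{X}_2\rangle+\langle\rho,\mathcal{X}_2\rangle=\langle\rho,\mathcal{X}_2\rangle$. Finally, $\langle r^2\rangle$ is central and $D_8/\langle r^2\rangle\cong C_2\times C_2$, so $\mathbb{C}[D_8/\langle r^2\rangle]=\mathbf{1}\oplus\epsilon_r\oplus\psi\oplus\psi'$, whence $\textup{rk}_2\textup{Jac}_{\Gamma_d/\langle r^2\rangle}=\langle\epsilon_r,\mathcal{X}_2\rangle$. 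Assembling these four computations yields $\mathcal{X}_2(\textup{Jac}_{\Gamma_d})\otimes\mathbb{C}\cong\epsilon_r^{\textup{rk}_2\textup{Jac}_{\Gamma_d/\langle r^2\rangle}}\oplus\rho^{\textup{rk}_2\textup{Jac}_Z}$.

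The argument is essentially formal once the above identification is in hand; the only points that require care are matching the abstract subgroups $C_2^a=\langle s\rangle$, $(C_2^2)^a=\langle s,r^2\rangle$, $(C_2^2)^b=\langle sr,r^2\rangle$, $\langle r^2\rangle$ with the corresponding quotients in the diagram of Proposition \ref{automorphisms_on_richelot}(3), and the elementary $D_8$ character theory decomposing each permutation module $\mathbb{C}[D_8/H]$ — in particular checking $\textup{Ind}_{C_2^a}^{D_8}\mathbf{1}=\mathbf{1}\oplus\psi\oplus\rho$. I do not expect a genuine obstacle here: the three genus-zero quotients $\mathbb{P}^1_T,\mathbb{P}^1_x,\mathbb{P}^1_{\tilde x}$ supply precisely enough vanishing to eliminate the three linear characters $\mathbf{1},\psi,\psi'$, and the remaining two multiplicities are then read directly off $Z$ and $\Gamma_d/\langle r^2\rangle$; as a consistency check one may also verify $\textup{rk}_2\textup{Jac}_{\tilde Z_d}=\langle\rho,\mathcal{X}_2\rangle$, compatibly with the Richelot isogeny $\textup{Jac}_Z\to\textup{Jac}_{\tilde Z_d}$ of Theorem \ref{pseudo_brauer_richelot}.
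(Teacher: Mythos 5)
Your proposal is correct and follows essentially the same route as the paper: compute each irreducible multiplicity in $\mathcal{X}_2$ via Frobenius reciprocity and the identification of $H$-invariants with the Selmer group of $\textup{Jac}_{\Gamma_d/H}$, use the three genus-zero quotients $\mathbb{P}^1_T$, $\mathbb{P}^1_x$, $\mathbb{P}^1_{\tilde x}$ to kill $\mathbf{1}$ and the two linear characters $\epsilon_s,\epsilon_{sr}$ (your $\psi,\psi'$), and read the remaining multiplicities off $Z=\Gamma_d/\langle s\rangle$ and $\Gamma_d/\langle r^2\rangle$. The paper states the vanishing for $(C_2^2)^a$ and says the rest is "found in a similar way"; your write-up simply makes those remaining computations explicit.
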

\begin{proof}
The set of irreducible $\mathbb{C}$-representations for $D_8$ are $\{ \mathbf{1} ,\epsilon_{s}, \epsilon_{r}, \epsilon_{sr}, \rho \}$. Let $(C_2^2)^a= \langle s, r^2 \rangle$. Then, by Frobenius reciprocity, $\langle \mathcal{X}_{2} , \mathbb{C}[D_8/(C_2^2)^a]  \rangle = \langle \mathcal{X}_{2} , \mathbf{1} \oplus \epsilon_{s}  \rangle $ coincides with the dimension of $(C_2^2)^a$-invariant vectors in $\mathcal{X}_{2}$. This coincides with the $2^{\infty}$-Selmer rank of the Jacobian of the quotient of $\Gamma_d$ by $(C_2^2)^a$ (see \cite[Remark A.29]{DGKM}), which must be $0$ by Proposition \ref{automorphisms_on_richelot}(3). Therefore, $\langle \mathcal{X}_{2} , \mathbf{1} \rangle = \langle \mathcal{X}_{2} ,  \epsilon_{s}  \rangle =0$. The multiplicities of the remaining irreducible representations in $\mathcal{X}_{2}$ are found in a similar way. \qedhere
\end{proof}

\begin{theorem} \label{local_formula_Richelot} Let $Z/K$ be a $C2D4$ genus $2$ curve which admits a non-singular quadratic splitting. Then,
\[\textup{rk}_{2}(\textup{Jac}_{Z}/K)= \sum_{v \ \textup{place of} \ K} \textup{ord}_{2} \ \Lambda_{\Theta}( \Gamma_d/ K_{v}) \mod 2,\]
where $\Theta$ is the pseudo Brauer relation for $D_8$ and $\Gamma_d$ afforded by Theorem \ref{pseudo_brauer_richelot}.
\end{theorem}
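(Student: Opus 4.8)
The plan is to follow the template of Theorem~\ref{local_expression_KT}: identify the special set $S_{\Theta,2}$ attached to the Brauer relation $\Theta = C_2^a - C_2^b + (C_2^2)^b - (C_2^2)^a$ supplied by Theorem~\ref{pseudo_brauer_richelot}, and then feed $\Theta$ into Theorem~\ref{regulator_constant_rank_parity} with $p=2$. First I would clear the hypotheses: every irreducible $\mathbb{Q}[D_8]$-representation is self-dual and realisable over $\mathbb{Q}$ --- the four one-dimensional characters trivially, and $\rho$ via the orthogonal model on $\mathbb{Q}^2$ in which $r$ acts as a rotation of order $4$ and $s$ as the reflection $(a,b)\mapsto(a,-b)$ --- so in particular this holds for $\Omega^{1}(\textup{Jac}_{\Gamma_d})$, making $\Lambda_\Theta(\Gamma_d/K_v)$ well defined and Theorem~\ref{regulator_constant_rank_parity} applicable.

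Next I would cut down the candidates. By Lemma~\ref{rep_strucutre}, the only self-dual irreducible $\mathbb{Q}_2[D_8]$-constituents of $\mathcal{X}_2 := \mathcal{X}_2(\textup{Jac}_{\Gamma_d})$ are $\epsilon_r$ and $\rho$, and $\langle\rho,\mathcal{X}_2\rangle = \textup{rk}_2\textup{Jac}_Z$ --- the $\mathbb{C}$-isomorphism of that lemma descends to $\mathbb{Q}_2$, since $\mathcal{X}_2$ is semisimple and both $\epsilon_r$, $\rho$ are already defined over $\mathbb{Q}$ --- so $S_{\Theta,2}\subseteq\{\epsilon_r,\rho\}$. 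Then I would compute the two regulator constants. For $\epsilon_r$: none of the permutation modules $\mathbb{Q}[D_8/C_2^a]$, $\mathbb{Q}[D_8/C_2^b]$, $\mathbb{Q}[D_8/(C_2^2)^a]$, $\mathbb{Q}[D_8/(C_2^2)^b]$ contains $\epsilon_r$ (equivalently, every $\epsilon_r$-invariant space occurring in $\Theta$ vanishes), so $\mathcal{C}_\Theta(\epsilon_r)=1$ by Lemma~\ref{trivial_regulator_constant}(1), and $\epsilon_r\notin S_{\Theta,2}$. For $\rho$: in the model above the nonzero invariant spaces are the lines $\rho^{C_2^a}=\langle(1,0)\rangle$ and $\rho^{C_2^b}=\langle(1,-1)\rangle$, while $\rho^{(C_2^2)^a}=\rho^{(C_2^2)^b}=0$ because both these Klein-four subgroups contain $r^2$, which acts as $-\mathrm{id}$. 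Using the $G$-invariant standard inner product and the $\frac{1}{|H|}$ normalisations of Notation~\ref{pairing_1_2_notat}, the relevant Gram determinants come out as $\frac12$ (from $\rho^{C_2^a}$, of squared norm $1$) and $1$ (from $\rho^{C_2^b}$, of squared norm $2$), so $\mathcal{C}_\Theta(\rho)\equiv\frac12\equiv 2$ in $\mathbb{Q}^{\times}/\mathbb{Q}^{\times 2}$; hence $\textup{ord}_2\mathcal{C}_\Theta(\rho)$ is odd and $\rho\in S_{\Theta,2}$ exactly when $\langle\rho,\mathcal{X}_2\rangle>0$.

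Putting this together, in every case $\sum_{\tau\in S_{\Theta,2}}\langle\tau,\mathcal{X}_2\rangle = \langle\rho,\mathcal{X}_2\rangle = \textup{rk}_2\textup{Jac}_Z$ (the sum being empty precisely when this rank is zero), so Theorem~\ref{regulator_constant_rank_parity} applied to $\Theta$ with $p=2$ yields the claimed congruence. The one genuinely computational point --- and essentially the only place an error could enter --- is the evaluation of $\mathcal{C}_\Theta(\rho)$: one must see that both order-$4$ invariant subspaces collapse (since $r^2\mapsto-\mathrm{id}$), keep track of the $1/|H_i|$ factors of Notation~\ref{pairing_1_2_notat}, and work with a genuinely $G$-invariant bilinear form as required by Definition~\ref{def:regulator_constants_for_pseudo_brauer}; the triviality of $\mathcal{C}_\Theta(\epsilon_r)$, by contrast, is immediate from Lemma~\ref{trivial_regulator_constant}.
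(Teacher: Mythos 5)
Your proposal is correct and follows essentially the same route as the paper: use Lemma \ref{rep_strucutre} to restrict attention to $\epsilon_r$ and $\rho$, compute $\mathcal{C}_\Theta(\epsilon_r)\equiv 1$ and $\mathcal{C}_\Theta(\rho)\equiv 2$ so that $S_{\Theta,2}=\{\rho\}$, and then apply Theorem \ref{regulator_constant_rank_parity} with $p=2$. The only difference is that you spell out the ``elementary computation'' of the regulator constants (and the self-duality/rationality of $\Omega^1$ as a $D_8$-representation) which the paper leaves implicit, and your details check out.
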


\begin{proof}
We write $\mathcal{X}_{2}$ to denote $\mathcal{X}_{2}(\textup{Jac}_{\Gamma_d}).$ An elementary computation reveals that $\mathcal{C}_{\Theta}(\rho)\equiv 2$, while $\mathcal{C}_{\Theta}(\epsilon_{r})\equiv 1$. Therefore,  $ S_{\Theta,2} = \{ \rho \}$ (see Definition \ref{def_special_subset}). By Lemma \ref{rep_strucutre}, we deduce that $ \sum_{\tau \in \mathcal{S}_{\Theta,2}} \langle \tau , \mathcal{X}_{2} \rangle = 2^{\textup{rk}_2\textup{Jac}_{Z}}.$ The result follows upon applying Theorem \ref{regulator_constant_rank_parity} to the pseudo Brauer relation $\Theta$ for $D_8$ and $\Gamma_d$ afforded by Theorem \ref{pseudo_brauer_richelot} with $p=2$. \qedhere
\end{proof}
\section{Prym varieties of trigonal curves}\label{Sec_Prym}

We let $Z/K$ be a geometrically connected curve of genus $g$, and in addition we suppose $\textup{Jac}_{Z}[2](k)$ has a non-trivial torsion point, or equivalently there exists an unramified double cover $ \pi: \tilde{Z} \to Z$ defined over $K$. The \textit{Prym variety} denoted $\textup{Prym}(\tilde{Z}/Z)$ of $\pi$ is the abelian variety given by the connected component of $\textup{Ker}(\pi_{*})$ containing the identity. It is classically well-known that there exists a $K$-rational isogeny 
\begin{equation*} \textup{Jac}(\tilde{Z}) \to \textup{Jac}(Z) \times \textup{Prym}(\tilde{Z}/Z),\end{equation*}
It is worthy of note that $\textup{Prym}(\tilde{Z}/Z)$ need not be a Jacobian, but it necessarily is principally polarised. The trigonal construction \cite[\S 2.4]{donagi} establishes that for trigonal curves $Z$ defined over algebraically closed fields, the Prym variety is a Jacobian of a tetragonal curve. A generalisation of this in the case of non-algebraically closed fields can be found in \cite{bruin-prym} where the authors prove that the Prym is necessarily a Jacobian upon taking a quadratic twist of the cover $\pi:\tilde{Z} \to Z$ if needed.

\begin{definition} \label{defn_quadratic_twist} Let $\pi: \tilde{Z} \to Z$ be a degree $2$ cover. We say that $\pi_{d}:\tilde{Z}_{d} \to Z$ is a quadratic twist of $\pi$ by $d \in K^{\times}$ if there exists an isomorphism $\psi:\tilde{Z}_{d} \xrightarrow{\sim} \tilde{Z}$ defined over $K(\sqrt{d})$ such that $\pi_{d}= \pi \circ \psi.$
\end{definition}

As can be deduced from \cite[Proposition 2.3]{bruin-prym}, for an appropriate value of $d \in K^{\times}$, there exists an isogeny 
\begin{equation*} \label{prym_isogeny} \textup{Jac}_{\tilde{Z}_{d}} \to \textup{Jac}_{Z} \times \textup{Prym}(\tilde{Z}_{d}/Z), \end{equation*} and, at this instance, $\textup{Prym}(\tilde{Z}_{d}/Z)$ is a Jacobian of a tetragonal curve. We note that we allow $d$ to lie in $K^{\times 2}$ in which case $\tilde{Z}_{d} \cong_{K} \tilde{Z}$ is the trivial twist. We write $X_{\tilde{Z}} \to \mathbb{P}^{1}$ to denote the Galois closure of $\tilde{Z} \to Z \to \mathbb{P}^{1}$, and we define $X_{\tilde{Z}_{d}}$ similarly.

\begin{theorem} \label{pseudo_brauer_verifiability_prym} Let $\pi:\tilde{Z} \to Z$ be an unramified degree $2$ cover. Then, there exists $d \in K^{\times}$, and a quadratic twist $\pi_d:\tilde{Z}_{d} \to Z$ such that $S_4$ acts on $X_{\tilde{Z}_{d}}$. In addition, $\Theta = C_2^2 -  D_8 - S_3 + S_4$ is a pseudo Brauer relation for $S_4$ and $X_{\tilde{Z}_{d}}$ which verifies an isogeny $\textup{Jac}_{\tilde{Z}_{d}} \to \textup{Jac}_{Z} \times \textup{Prym}(\tilde{Z}_{d}/Z)$.
\end{theorem}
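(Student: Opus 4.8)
The strategy is to recover the classical Prym/trigonal construction of Donagi \cite{donagi} and of \cite{bruin-prym} as a Brauer relation, by building the relevant $S_4$-cover from the bottom up as a Galois closure and then reading off the five quotient curves. First I would invoke \cite{bruin-prym} (as recalled above) to fix a constant $d\in K^{\times}$ and a quadratic twist $\pi_d\colon\tilde Z_d\to Z$ for which the trigonal construction descends to $K$: there is a tetragonal curve $Z'/K$, a $K$-isomorphism $\textup{Prym}(\tilde Z_d/Z)\cong\textup{Jac}_{Z'}$, and a $K$-isogeny $\textup{Jac}_{\tilde Z_d}\to\textup{Jac}_{Z}\times\textup{Jac}_{Z'}$. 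Let $\mathbb{P}^1$ carry the coordinate for which $Z\to\mathbb{P}^1$ is the degree-$3$ trigonal map, and set $X:=X_{\tilde Z_d}$, the Galois closure of the degree-$6$ composite $\tilde Z_d\to Z\to\mathbb{P}^1$.

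The heart of the argument is to show $\textup{Gal}(X/\mathbb{P}^1_K)=S_4$. Passing to $\overline{K}$ and using the monodromy package of \S\ref{subsec_monodromy} (Theorems \ref{theorem_miranda}, \ref{equivariant_riemann_hurwitz}, Corollary \ref{eq_riemann_hurwitz_gc}), the monodromy group $M$ of $\tilde Z_d\to\mathbb{P}^1$ is a transitive subgroup of the imprimitive group $S_2\wr S_3\le S_6$, the three blocks of size $2$ being the fibres of the unramified cover $\tilde Z_d\to Z$. Because $\tilde Z_d\to Z$ is unramified, every branch point of $\tilde Z_d\to\mathbb{P}^1$ lies over a branch point of $Z\to\mathbb{P}^1$, and the local monodromy there is a lift of the corresponding local monodromy $\sigma_i\in S_3$ introducing no new ramification; comparing cycle types block-by-block shows such a lift is an even permutation of the six points (a transposition in $S_3$ lifts to a product of two disjoint transpositions, a $3$-cycle to a product of two disjoint $3$-cycles, and so on). Hence $M\le(S_2\wr S_3)\cap A_6$, and one checks the latter is the index-$2$ subgroup $V_4\rtimes S_3\cong S_4$, acting on the six points as $S_4$ acts on the $2$-subsets of $\{1,2,3,4\}$. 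Conversely $M$ surjects onto the monodromy of the trigonal map, which is $S_3$ (we are in the non-Galois trigonal case, the Galois $C_3$-case being excluded), and $M\cap V_4$ is an $\mathbb{F}_2[S_3]$-submodule of the irreducible module $V_4$, hence $0$ or $V_4$; transitivity of $M$ on six points forces $M\cap V_4=V_4$, so $M=S_4$. Finally the choice of $d$ guarantees that $X$ is geometrically connected, so arithmetic and geometric monodromy coincide and $S_4$ genuinely acts on $X$ over $K$.

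It then remains to match subgroups with curves and check the relation. Up to conjugacy $S_4$ has a single class of subgroups of each of the orders $6$ and $8$ (and two of order $4$), and one identifies $X/S_4=\mathbb{P}^1$; $X/D_8=Z$ ($D_8$ a block stabiliser, giving the degree-$3$ map); $X/C_2^2=\tilde Z_d$ ($C_2^2$ the non-normal Klein four point-stabiliser of the six-point action, distinguished from the normal $V_4$ by being core-free); and $X/S_3=Z'$, the tetragonal curve, whose Jacobian is $\textup{Prym}(\tilde Z_d/Z)$ by the trigonal construction and the choice of $d$. A short character computation (equivalently, the textbook $S_4$ Brauer relation) gives $\textup{Ind}_{C_2^2}^{S_4}\mathbf{1}\oplus\mathbf{1}\cong\textup{Ind}_{D_8}^{S_4}\mathbf{1}\oplus\textup{Ind}_{S_3}^{S_4}\mathbf{1}$ (both sides are $\mathbf{1}^{\oplus 2}$ together with the $2$- and $3$-dimensional irreducibles), so $\Theta=C_2^2-D_8-S_3+S_4$ is a Brauer relation for $S_4$, hence a pseudo Brauer relation for $S_4$ and $X$ by Remark \ref{brauer_verfiaibility}. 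Since $\textup{Jac}_{\mathbb{P}^1}=0$, conditions (1)--(2) of Definition \ref{brauer_verifiability_definition} hold for the isogeny $\textup{Jac}_{\tilde Z_d}\to\textup{Jac}_Z\times\textup{Prym}(\tilde Z_d/Z)$, as required.

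I expect the crux to be the interplay with the twist: one must show that the constant $d$ from \cite{bruin-prym} is exactly the one making $X_{\tilde Z_d}$ geometrically connected (equivalently, making the arithmetic monodromy drop from $S_2\wr S_3$ to $S_4$, equivalently making $Z'$ descend to $K$), and then verify \emph{over $K$}, not merely geometrically, that $X/S_3$ really is the tetragonal curve $Z'$ of \cite{bruin-prym} rather than a twist of it --- i.e. reconciling the bottom-up Galois-closure description of $X$ with the classical trigonal construction. The monodromy bookkeeping (evenness of the lifts, the isomorphism $(S_2\wr S_3)\cap A_6\cong S_4$, irreducibility of $V_4$) is routine, but it is precisely here that the hypothesis of a non-Galois trigonal map enters: the degenerate $C_3$-case would give monodromy $A_4$, for which $\Theta$ does not even make sense since $D_8\not\le A_4$.
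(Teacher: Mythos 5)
Your overall route coincides with the paper's: pass to the Galois closure of $\tilde Z_d\to Z\to\mathbb{P}^1$, get the group $S_4$ after a suitable quadratic twist, identify the quotients by $C_2^2$, $D_8$, $S_3$, $S_4$, and use the standard $S_4$ relation $C_2^2-D_8-S_3+S_4$. Your monodromy computation of the geometric group (evenness of the local lifts because $\pi$ is unramified, $(S_2\wr S_3)\cap A_6\cong S_4$, irreducibility of $V_4$ as an $\mathbb{F}_2[S_3]$-module) is a correct, self-contained substitute for the paper's appeal to \cite{donagi}, and your character check of the relation, including the identification of $C_2^2$ as the core-free Klein four, is right.

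The genuine gap is exactly the point you flag as ``the crux'' and then leave open: having fixed $d$ via \cite{bruin-prym}, you assert without proof that this $d$ makes $X_{\tilde Z_d}$ geometrically connected (so that the arithmetic group is $S_4$ rather than $C_2\times S_4$), and that $X_{\tilde Z_d}/S_3$ is the tetragonal curve $Z'$ over $K$ rather than a twist of it. This compatibility is the arithmetic heart of the statement, and your argument does not establish it. The paper runs the construction in the opposite order, which dissolves the difficulty: it takes $d$ so that $K(\sqrt d)$ is the field of definition of the geometric components of $X_{\tilde Z}$ (trivial twist if $X_{\tilde Z}$ is already geometrically connected), so that after twisting $\pi$ by this $d$ the closure is geometrically connected with group $S_4$ essentially by construction, and only afterwards cites \cite[\S 2.2]{bruin-prym} and \cite[Theorem 2.11]{donagi} to identify $\textup{Jac}_{X_{\tilde Z_d}/S_3}$ with $\textup{Prym}(\tilde Z_d/Z)$ over $K$. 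A secondary issue: you exclude the Galois ($C_3$) trigonal case and implicitly assume the honest closure has full group $S_4$, but the theorem carries no genericity hypothesis; the paper covers degenerate situations by replacing $X_{\tilde Z_d}$ with the $S_4$-closure of $\tilde Z_d\to Z\to\mathbb{P}^1$ in the sense of \cite[Definition A.16]{DGKM}, which always carries an $S_4$-action with the required quotients (connectedness of $X$ is not needed for the pseudo Brauer formalism, since curves here may be disconnected).
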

\begin{proof}
In general, $\textup{Gal}(X_{\tilde{Z}} \to \mathbb{P}^{1})  \cong C_2^3 \rtimes S_3 \cong C_2 \times S_4$. As can be deduced from \cite{donagi}, over $\overline{K}$, we have $\textup{Gal}(X_{\tilde{Z}}/\overline{K} \to \mathbb{P}^{1}/\overline{K})\cong S_4$. Therefore, we deduce that $X_{\tilde{Z}}$ need not be geometrically connected, and its geometric components are (at worst) defined over a quadratic extension $K(\sqrt{d})$. Upon twisting $\pi$ by $d \in K^{\times}$ however, we have that $\textup{Gal}(X_{\tilde{Z}_{d}}/ \mathbb{P}^{1})=S_4$ in which case $X_{\tilde{Z}_{d}}$ is geometrically connected. By replacing  $X_{\tilde{Z}_{d}}$ with the $S_4$-closure of $\tilde{Z}_{d} \to Z \to \mathbb{P}^{1}$ in the sense of \cite[Definition A.16]{DGKM} we can assume that the $\textup{Gal}(X_{\tilde{Z}_{d}}/ \mathbb{P}^{1})$ is always $S_4$. Then, by construction, we have that 
\begin{equation} \label{quotients_prym} X/S_4 = \mathbb{P}^{1}, X_{\tilde{Z}_{d}}/D_8 = Z \ \textup{while} \ X_{\tilde{Z}_{d}}/(C_2^2) = \tilde{Z}_{d}. \end{equation} 
We have that $C_2^2 - D_8 - S_3 +S_4$ is a Brauer relation for the group $S_4$. Therefore, it's a pseudo Brauer relation for $S_4$ acting on $X_{\tilde{Z}_{d}}$ and in view of the quotients given in \eqref{quotients_prym}, we can deduce an isogeny $\textup{Jac}_{\tilde{Z}_{d}} \to \textup{Jac}_{Z} \times \textup{Jac}_{X_{\tilde{Z}_{d}/S_3}}$. As detailed in \cite[\S 2.2]{bruin-prym} and \cite[Theorem 2.11]{donagi}, the Jacobian of the quotient $X_{\tilde{Z}_{d}}/S_3$ is the Prym variety of $\pi_{d}$. This completes the proof.
\end{proof}

For the remainder of this section, we write $\sigma$ to denote the standard $3$-dimensional irreducible representation of $S_4$, and $\tau$ for the irreducible $2$-dimensional representation lifted from the $S_3$ quotient of $S_4$. 
\begin{lemma} \label{lemma_multiplicities} For any prime $p$, the multiplicities of $\sigma$ and $\tau$ in $\mathcal{X}_{p}(\textup{Jac}_{X_{\tilde{Z}_{d}}})$ satisfy
\[\langle \mathcal{X}_{p}(\textup{Jac}_{X_{\tilde{Z}_{d}}}), \sigma \rangle = \textup{rk}_{p}\ \textup{Prym}(\tilde{Z}_{d}/Z) \ \ \textup{and} \ \ \langle \mathcal{X}_{p}(\textup{Jac}_{X_{\tilde{Z}_{d}}}), \tau \rangle = \textup{rk}_{p}\ \textup{Jac}_{Z}.  \]
\end{lemma}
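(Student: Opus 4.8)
The plan is to compute the $G$-module structure of $\mathcal{X}_p(\textup{Jac}_{X_{\tilde{Z}_d}})$ using the isogeny decomposition already established, or equivalently by extracting multiplicities via invariants of subgroups, in the same spirit as Lemma \ref{rep_strucutre} in the Richelot case. Write $X = X_{\tilde{Z}_d}$ and $G = S_4$ for brevity. First I would recall that the irreducible $\mathbb{Q}[S_4]$-representations are $\mathbf{1}$, the sign character $\epsilon$, the $2$-dimensional representation $\tau$ (inflated from $S_3$), the standard $3$-dimensional representation $\sigma$, and $\sigma \otimes \epsilon$; all of these are rational, hence realisable over $\mathbb{Q}_p$, and all are self-dual. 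So $\mathcal{X}_p(\textup{Jac}_X)$ decomposes (after semisimplification, which is automatic since we are over $\mathbb{Q}_p$ in characteristic $0$) as a sum of these with non-negative multiplicities, and it suffices to pin down the multiplicities of $\sigma$ and $\tau$.

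The key computational device is Frobenius reciprocity together with the identification, recorded in \cite[Remark A.29]{DGKM} and used already in Lemma \ref{rep_strucutre}, that for any $H \leq G$ the dimension of $H$-invariants $\langle \mathcal{X}_p(\textup{Jac}_X), \mathbb{Q}_p[G/H]\rangle$ equals $\textup{rk}_p \textup{Jac}_{X/H}$. Applying this with $H = D_8$: by \eqref{quotients_prym} we have $X/D_8 = Z$, so $\langle \mathcal{X}_p(\textup{Jac}_X), \mathbb{Q}_p[G/D_8]\rangle = \textup{rk}_p \textup{Jac}_Z$. Decomposing the permutation module: $\mathbb{Q}_p[S_4/D_8] \cong \mathbf{1} \oplus \tau$ (this is the natural $3$-point permutation representation of $S_4$, coming from the $S_3$-quotient, minus nothing — it is $\mathbf{1}\oplus\tau$). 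Hence $\langle \mathcal{X}_p(\textup{Jac}_X), \mathbf{1}\rangle + \langle \mathcal{X}_p(\textup{Jac}_X), \tau\rangle = \textup{rk}_p \textup{Jac}_Z$. Next, apply the same with $H = S_4$ itself: $X/S_4 = \mathbb{P}^1$, so $\langle \mathcal{X}_p(\textup{Jac}_X), \mathbf{1}\rangle = \textup{rk}_p \textup{Jac}_{\mathbb{P}^1} = 0$. Combining the two gives $\langle \mathcal{X}_p(\textup{Jac}_X), \tau\rangle = \textup{rk}_p \textup{Jac}_Z$, which is the second assertion. For the first, apply the device with $H = S_3$: by the proof of Theorem \ref{pseudo_brauer_verifiability_prym} the quotient $X/S_3$ has Jacobian $\textup{Prym}(\tilde{Z}_d/Z)$, so $\langle \mathcal{X}_p(\textup{Jac}_X), \mathbb{Q}_p[G/S_3]\rangle = \textup{rk}_p \textup{Prym}(\tilde{Z}_d/Z)$. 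Decomposing the $4$-point permutation representation $\mathbb{Q}_p[S_4/S_3] \cong \mathbf{1} \oplus \sigma$, and using $\langle \mathcal{X}_p(\textup{Jac}_X), \mathbf{1}\rangle = 0$ again, we obtain $\langle \mathcal{X}_p(\textup{Jac}_X), \sigma\rangle = \textup{rk}_p \textup{Prym}(\tilde{Z}_d/Z)$, the first assertion.

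The only real subtlety — and the step I would be most careful about — is justifying the decompositions $\mathbb{Q}_p[S_4/D_8]\cong\mathbf{1}\oplus\tau$ and $\mathbb{Q}_p[S_4/S_3]\cong\mathbf{1}\oplus\sigma$ over $\mathbb{Q}_p$ rather than just over $\mathbb{C}$; since all the irreducibles involved are already rational this is automatic, but one should at least note it. A secondary point is to make sure the identification $\langle \mathcal{X}_p(\textup{Jac}_X), \mathbb{Q}_p[G/H]\rangle = \textup{rk}_p\textup{Jac}_{X/H}$ from \cite[Remark A.29]{DGKM} applies with $H = S_3$ even though $X/S_3$ is not the Galois closure; it only requires $S_3 \leq \textup{Aut}_K(X)$, which holds, and that the Prym is indeed $\textup{Jac}_{X/S_3}$ over $K$, which is exactly what was invoked at the end of the proof of Theorem \ref{pseudo_brauer_verifiability_prym} via \cite[\S 2.2]{bruin-prym} and \cite[Theorem 2.11]{donagi}. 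With these points in place the proof is a short bookkeeping argument, and I would write it in roughly a half page.
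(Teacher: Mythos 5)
Your argument is correct and is essentially the paper's own proof: the paper simply says the lemma "follows in exactly the same way as Lemma \ref{rep_strucutre}", i.e. by Frobenius reciprocity identifying $\langle \mathcal{X}_{p},\mathbb{Q}_p[G/H]\rangle$ with $\textup{rk}_{p}\,\textup{Jac}_{X_{\tilde Z_d}/H}$ for $H=S_4, D_8, S_3$ and using the quotient curves from Theorem \ref{pseudo_brauer_verifiability_prym}. Your extra remarks on rationality of the $S_4$-irreducibles and on $\textup{Jac}_{X_{\tilde Z_d}/S_3}$ being the Prym are exactly the points implicitly relied upon there.
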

\begin{proof}
Follows in exactly the same way as Lemma \ref{rep_strucutre}.
\end{proof}

\begin{theorem} \label{prym_local_formula}
Let $\pi: \tilde{Z} \to Z$ be an unramified,double cover of a trigonal curve $Z$. Then, there exists a $d \in K^{\times}$ and a quadratic twist $\pi_d : \tilde{Z}_{d} \to Z$ of $\pi$ such that
\[ \textup{rk}_{2} \ \textup{Jac}_{Z} + \textup{rk}_{2} \ \textup{Prym}(\tilde{Z}_{d}/Z) \equiv \sum_{v \ \textup{place of} \ K} \textup{ord}_{2} \ \Lambda_{\Theta}(X_{\tilde{Z}_{d}}/K_v) \mod 2, \]
where $\Theta$ is the pseudo Brauer relation for $S_4$ and $X_{\tilde{Z}_{d}}$ afforded by Theorem \ref{pseudo_brauer_verifiability_prym}.
\end{theorem}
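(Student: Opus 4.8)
The plan is to invoke Theorem \ref{regulator_constant_rank_parity} with $p=2$ for the Brauer relation $\Theta = C_2^2 - D_8 - S_3 + S_4$ attached by Theorem \ref{pseudo_brauer_verifiability_prym} to the action of $S_4$ on $X_{\tilde{Z}_{d}}$, and to identify the resulting left-hand side $\sum_{\rho \in S_{\Theta,2}}\langle \rho, \mathcal{X}_2(\textup{Jac}_{X_{\tilde{Z}_{d}}})\rangle$ with $\textup{rk}_2\,\textup{Jac}_Z + \textup{rk}_2\,\textup{Prym}(\tilde{Z}_{d}/Z)$. Before anything else I would check the hypotheses: every irreducible representation of $S_4$ is self-dual and realisable over $\mathbb{Q}$, so $\Omega^1(\textup{Jac}_{X_{\tilde{Z}_{d}}})$ is self-dual and $\mathbb{Q}$-rational, which makes both Theorem \ref{regulator_constant_rank_parity} and the local invariant $\Lambda_\Theta$ of Definition \ref{def:lemma:local_invariants} available.

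The heart of the argument is pinning down the special set $S_{\Theta,2}$. List the irreducible $\mathbb{Q}_2[S_4]$-representations as $\mathbf{1}$, $\epsilon$ (the sign character), $\tau$ (the $2$-dimensional representation through the $S_3$-quotient), $\sigma$ (the standard $3$-dimensional representation) and $\sigma \otimes \epsilon$; since all of these are already self-dual, the only other members of $\mathcal{R}_{S_4}$ are the $\rho \oplus \rho$, whose regulator constants are squares and hence never lie in $S_{\Theta,2}$. I would then compute $\mathcal{C}_\Theta$ on each irreducible. For $\epsilon$ and $\sigma\otimes\epsilon$ one checks that they have no invariants under any of $C_2^2$, $D_8$, $S_3$, $S_4$ (for $\epsilon$ because these subgroups are not contained in $A_4$; for $\sigma\otimes\epsilon$ by a short character computation), so Lemma \ref{trivial_regulator_constant}(1) gives $\mathcal{C}_\Theta(\epsilon)=\mathcal{C}_\Theta(\sigma\otimes\epsilon)=1$ and these drop out. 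For $\mathbf{1}$ the four invariant lines are one-dimensional and a direct evaluation of the $|H|^{-1}$-rescaled pairing yields $\mathcal{C}_\Theta(\mathbf{1})\equiv 2$ modulo squares; nevertheless $\mathbf{1}$ is excluded from $S_{\Theta,2}$ because $\langle \mathbf{1}, \mathcal{X}_2(\textup{Jac}_{X_{\tilde{Z}_{d}}})\rangle = \textup{rk}_2\,\textup{Jac}_{X_{\tilde{Z}_{d}}/S_4} = 0$, the quotient being $\mathbb{P}^1$. Finally, for $\tau$ one finds $\tau^{S_3}=\tau^{S_4}=0$ while $\tau^{C_2^2}$ and $\tau^{D_8}$ are lines carrying value $6$ under the chosen $S_4$-invariant form, so $\mathcal{C}_\Theta(\tau)\equiv \tfrac{6/|C_2^2|}{6/|D_8|}=2$ modulo squares; and for $\sigma$ one finds $\sigma^{D_8}=\sigma^{S_4}=0$ while $\sigma^{C_2^2}$ and $\sigma^{S_3}$ are lines of norm $4$ and $12$, so $\mathcal{C}_\Theta(\sigma)\equiv\tfrac{4/|C_2^2|}{12/|S_3|}=\tfrac12\equiv 2$ modulo squares (consistently with $\mathcal{C}_\Theta(\mathbf{1})\mathcal{C}_\Theta(\sigma)\equiv 1$, which follows from multiplicativity of $\mathcal{C}_\Theta$ applied to $\mathbb{C}[S_4]$). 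Thus $\textup{ord}_2\,\mathcal{C}_\Theta$ is odd exactly on $\tau$ and $\sigma$ among the representations that can occur in $\mathcal{X}_2$, giving $S_{\Theta,2}\subseteq\{\tau,\sigma\}$.

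To conclude, Lemma \ref{lemma_multiplicities} supplies $\langle\tau,\mathcal{X}_2(\textup{Jac}_{X_{\tilde{Z}_{d}}})\rangle = \textup{rk}_2\,\textup{Jac}_Z$ and $\langle\sigma,\mathcal{X}_2(\textup{Jac}_{X_{\tilde{Z}_{d}}})\rangle = \textup{rk}_2\,\textup{Prym}(\tilde{Z}_{d}/Z)$; since a representation of multiplicity zero contributes nothing to the sum, $\sum_{\rho\in S_{\Theta,2}}\langle\rho,\mathcal{X}_2(\textup{Jac}_{X_{\tilde{Z}_{d}}})\rangle = \textup{rk}_2\,\textup{Jac}_Z + \textup{rk}_2\,\textup{Prym}(\tilde{Z}_{d}/Z)$, and Theorem \ref{regulator_constant_rank_parity} for $\Theta$ with $p=2$ delivers the asserted congruence. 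I expect the one genuinely computational step to be the determination of $\mathcal{C}_\Theta(\sigma)$ and $\mathcal{C}_\Theta(\tau)$: it requires restricting these representations to the non-normal Klein four-subgroup $C_2^2$ (the stabiliser of an unordered pair among the four letters on which $S_4$ acts) and to $D_8$ and $S_3$, writing down the invariant lines explicitly inside the standard models of $\sigma$ and $\tau$, and evaluating the $S_4$-invariant bilinear form on them; everything else is bookkeeping with the character table of $S_4$ together with the quotient identifications $X_{\tilde{Z}_{d}}/S_4=\mathbb{P}^1$, $X_{\tilde{Z}_{d}}/D_8=Z$, $X_{\tilde{Z}_{d}}/C_2^2=\tilde{Z}_{d}$ and $X_{\tilde{Z}_{d}}/S_3$ the Prym curve, already recorded in the proof of Theorem \ref{pseudo_brauer_verifiability_prym}.
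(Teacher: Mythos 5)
Your proposal is correct and follows essentially the same route as the paper: you cut $S_{\Theta,2}$ down to $\{\tau,\sigma\}$ using Lemma \ref{trivial_regulator_constant}(1) together with the quotient identifications from Theorem \ref{pseudo_brauer_verifiability_prym}, verify $\mathcal{C}_{\Theta}(\tau)\equiv\mathcal{C}_{\Theta}(\sigma)\equiv 2$, and conclude via Lemma \ref{lemma_multiplicities} and Theorem \ref{regulator_constant_rank_parity} with $p=2$. The only difference is that you write out the regulator-constant computations explicitly (and correctly, with the non-normal Klein four-subgroup), where the paper simply calls them elementary.
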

\begin{proof}
We write $\mathcal{X}_2$ to denote $\mathcal{X}_{2}(\textup{Jac}_{X_{\tilde{Z}_{d}}})$. Then, we claim that $S_{\Theta,2} = \{\tau, \sigma \}.$ Indeed, by combining the proof of Theorem \ref{pseudo_brauer_verifiability_prym} with Lemma \ref{trivial_regulator_constant}(1), we deduce that $S_{\Theta,2} \subseteq \{\tau, \sigma \}$. An elementary computation reveals that $\mathcal{C}_{\Theta}(\tau) \equiv \mathcal{C}_{\Theta}(\sigma) \equiv 2$, and therefore $S_{\Theta,2} = \{\tau, \sigma\}$. In view of Lemma \ref{lemma_multiplicities}, the local formula asserted by the claim follows upon applying Theorem \ref{regulator_constant_rank_parity} to $\Theta$ afforded by Theorem \ref{pseudo_brauer_verifiability_prym} with $p=2$.
\end{proof}
\section{Elliptic curves with cyclic isogeny}\label{Sec_EC}
In this section, we deduce a local expression for the $p^{\infty}$-Selmer rank of an elliptic curve that admits a cyclic isogeny of prime degree $p$ using pseudo Brauer relations and regulator constants. For the remainder of this section, we fix $E/K$ to be an elliptic curve, and we let $P\in E[p]$ be a generator for the kernel of this isogeny. Finally, we write $K(P)$ to denote the field acquired by adjoining the coordinates of $P$ to $K$ and $\tau_{P}$ for the translation-by-$P$ automorphism on $E$.

\begin{lemma}
\label{cyclicextension} 
The field extension $K(P)/K$ is Galois, and the homomorphism
\begin{equation*}
\begin{split} 
\label{eqn:Galois} \textup{Gal}{(}K(P)|K{)} &\xrightarrow{\psi} \textup{Aut}(\langle \tau_{P} \rangle)  \\
\sigma &\mapsto (\tau_{P} \mapsto \tau_{P^{\sigma}})
\end{split}
\end{equation*} defines a faithful action on $\langle \tau_{P} \rangle$.
\end{lemma}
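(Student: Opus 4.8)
The plan is to exploit that $\langle P\rangle$ is a $\mathrm{Gal}(\bar K/K)$-stable subgroup of $E[p]$, since it is the kernel of a $K$-rational isogeny $E\to E'$. Consequently, for every $\sigma\in\mathrm{Gal}(\bar K/K)$ one has $P^{\sigma}=[a_{\sigma}]P$ for a unique $a_{\sigma}\in(\mathbb{Z}/p\mathbb{Z})^{\times}$; it lies in the units because $\sigma$ permutes the $p-1$ points of exact order $p$ in $\langle P\rangle$. First I would deduce that $K(P)/K$ is Galois: every conjugate of $P$ is of the form $[a]P$ with $a\in\mathbb{Z}/p\mathbb{Z}$, and the coordinates of $[a]P$ are obtained from those of $P$ by applying the $K$-rational multiplication-by-$a$ map, hence already lie in $K(P)$; thus $K(P)/K$ is normal, and it is separable since $K$ has characteristic $0$.

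Next I would record the basic properties of $\tau_{P}$. Since $P\in E(K(P))$ and the group law on $E$ is defined over $K$, the translation $\tau_{P}$ is an automorphism of $E_{K(P)}$, and $\tau_{P}^{\,n}=\tau_{[n]P}$, so $\tau_{P}$ has order exactly $p$ and $\langle\tau_{P}\rangle\cong C_{p}$, giving $\mathrm{Aut}(\langle\tau_{P}\rangle)\cong(\mathbb{Z}/p\mathbb{Z})^{\times}$. The key computation is the conjugation identity: for $\sigma\in\mathrm{Gal}(K(P)/K)$ and any $Q\in E(\bar K)$ one has $(\sigma\tau_{P}\sigma^{-1})(Q)=\sigma(\sigma^{-1}Q+P)=Q+P^{\sigma}=\tau_{P^{\sigma}}(Q)$, using that $\sigma$ acts on $E(\bar K)$ by a group homomorphism. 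Hence $\sigma\tau_{P}\sigma^{-1}=\tau_{P^{\sigma}}=\tau_{[a_{\sigma}]P}=\tau_{P}^{\,a_{\sigma}}\in\langle\tau_{P}\rangle$, which shows that $\psi(\sigma)\colon\tau_{P}\mapsto\tau_{P^{\sigma}}$ is a well-defined element of $\mathrm{Aut}(\langle\tau_{P}\rangle)$ (well-defined precisely because $a_{\sigma}$ is a unit mod $p$), and, being realised by conjugation, it is automatically multiplicative in $\sigma$, i.e. an action.

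Finally I would check faithfulness: if $\psi(\sigma)$ is the identity then $\tau_{P^{\sigma}}=\tau_{P}$, and two translations agree iff the translating points coincide, so $P^{\sigma}=P$; thus $\sigma$ fixes the coordinates of $P$, i.e. acts trivially on $K(P)$, so $\sigma=\mathrm{id}$ in $\mathrm{Gal}(K(P)/K)$. This gives injectivity of $\psi$, which is exactly the faithfulness assertion. The only point requiring any care is the conjugation identity together with the verification that $\psi(\sigma)$ lands in $\mathrm{Aut}(\langle\tau_{P}\rangle)$ rather than merely in $\mathrm{End}(\langle\tau_{P}\rangle)$; both reduce to the $\mathrm{Gal}(\bar K/K)$-stability of $\langle P\rangle$ and the fact that $P$ has exact order $p$, so there is no serious obstacle.
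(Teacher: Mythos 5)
Your proof is correct, and for the Galois-ness of $K(P)/K$ it takes a genuinely different route from the paper. The paper passes through the full $p$-torsion field: it fixes a basis $E[p]=\langle P,Q\rangle$, identifies $\textup{Gal}(K(E[p])/K)$ with a subgroup of $\textup{GL}_2(\mathbb{F}_p)$ lying in the Borel $B$ of upper triangular matrices (this is where the $K$-rationality of the isogeny enters), and deduces normality of $K(P)/K$ from the fact that the subgroup $B'$ of $B$ with top-left entry $1$ is normal in $B$. You instead argue directly on $K(P)$: Galois-stability of $\langle P\rangle$ gives $P^{\sigma}=[a_{\sigma}]P$ with $a_{\sigma}\in(\mathbb{Z}/p\mathbb{Z})^{\times}$, and $K$-rationality of the multiplication maps puts the coordinates of every conjugate of $P$ inside $K(P)$, yielding normality without introducing $K(E[p])$ or choosing the auxiliary point $Q$. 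Your version is more elementary and self-contained, and you additionally make explicit, via the conjugation identity $\sigma\tau_{P}\sigma^{-1}=\tau_{P^{\sigma}}$, why $\psi$ is a well-defined homomorphism into $\textup{Aut}(\langle\tau_{P}\rangle)$ — a point the paper leaves implicit; the paper's matrix formulation buys a very short argument and displays the embedding of $\textup{Gal}(K(P)/K)$ into $B/B'\cong\mathbb{F}_p^{\times}$, though the cyclicity of order dividing $p-1$ used afterwards also follows from faithfulness alone in either approach. The faithfulness step (a $\sigma$ with $\psi(\sigma)$ trivial fixes $P$, hence fixes $K(P)$ pointwise) is essentially identical in both.
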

\begin{proof} 
Upon fixing a basis for $E[p] = \langle P , Q \rangle$, we can identify $\textup{Gal}(K(E[p])/K)$ with a subgroup of $GL_{2} (\mathbb{F}_{p})$. Then, $K=K(E[p])^{B}$ while $K(P)=K(E[p])^{B'}$, where $B$ denotes the subgroup of upper triangular matrices, while $B'$ denotes the subgroup of $B$ with top, left entry equal to $1$. Since $B'$ is a normal subgroup of $B$, then $K(P)/K$ is Galois. It suffices to prove that $\psi$ is injective. Indeed, if $\psi(\sigma_1)=\psi(\sigma_2)$, then $\sigma_1\sigma_2^{-1}$ fixes $P$ pointwise and therefore $\sigma_1=\sigma_2$. \qedhere

\end{proof}

\begin{notation}
For the remainder of this section, we write $M=K(P)$. Then, $\textup{Gal}(M/K) $ acts on $M(E)$ (on the left) via its action on constant functions $M $. For $f \in M(E)$,  we denote this action exponentially on the left by $f \mapsto {}^{\sigma} f$. By Lemma \ref{cyclicextension}, $\textup{Gal}(M/K) \cong C_m$ is cyclic of order dividing $p-1$, and in addition we can form the semi-direct product $ \langle \tau_{P} \rangle \rtimes \textup{Gal}(M/K) = C_{p} \rtimes C_m$. Finally, we write $_{K}(E_M)$ to denote the curve obtained from $E_M= E \times_{K} M$ by forgetting the $M$-structure (see \S\ref{notation_for_curves}).

\begin{proposition} \label{G-action:ellipticCurves} The semi-direct product $G= C_{p} \rtimes C_m$ acts on ${}_{K}(E_{M})$ via $K$-automorphisms. In addition, 
\begin{equation} \label{quotients_elliptic_curves} {}_{K}(E_{M})/C_m= E/K, \ \  {}_{K}(E_{M})/C_{p}= {}_{K}(E'_{M}), \ \ {}_{K}(E_{M})/G = E'/K,  \end{equation}  where $E'=E/\langle P \rangle$ denotes the $p$-isogenous elliptic curve.

\end{proposition}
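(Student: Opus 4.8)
The idea is to produce the $G$-action at the level of function fields and then transport it to the curve via the dictionary between connected $K$-curves and their function fields recorded in \cite[Appendix~A]{DGKM} (cf.\ Remarks~A.30 and A.31 there, already invoked in the proof of Theorem~\ref{Proposition:KT}). Write $\phi\colon E\to E'$ for the isogeny with $\ker\phi=\langle P\rangle$, which is defined over $K$. Since $E$ is geometrically connected over $K$, so is $E_M$ over $M$; hence ${}_{K}(E_M)$ is a connected smooth proper $K$-curve, with function field the compositum $M(E)=M\cdot K(E)=\textup{Frac}(M\otimes_K K(E))$ — the tensor product being a domain because $K(E)/K$ is a regular extension.

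First I would record the tower of function fields $K(E')\subseteq K(E)\subseteq M(E)$, where the first inclusion is $\phi^{*}$ and the second is the constant-field extension, and note $[M(E):K(E')]=mp=|G|$. The top layer $M(E)/M(E')$ is Galois with group $\langle\tau_P\rangle\cong C_p$ and fixed field $M(E')$: over $M$ the kernel $\langle P\rangle$ is pointwise rational, so $\phi_M\colon E_M\to E'_M$ is a separable degree-$p$ isogeny with $M$-rational kernel, hence an honest $\langle\tau_P\rangle$-Galois cover. The bottom layer $M(E')/K(E')$ is Galois with group $\textup{Gal}(M/K)\cong C_m$: it is the constant-field extension of the geometrically integral $K$-curve $E'$ by $M/K$, so restriction to $M$ identifies its Galois group with $\textup{Gal}(M/K)$ (using $M\cap K(E')=K$); the same reasoning gives $M(E)^{\textup{Gal}(M/K)}=K(E)$, where $\textup{Gal}(M/K)$ acts through $\sigma\mapsto{}^{\sigma}(\cdot)=\sigma\otimes\textup{id}_{K(E)}$.

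The one genuinely structural point is that $\tau_P^{*}$ and the constant-field automorphisms ${}^{\sigma}(\cdot)$ assemble into a copy of $G=C_p\rtimes_{\psi}C_m$ inside $\textup{Aut}_K(M(E))$. Here I would check the relation ${}^{\sigma}\circ\tau_P^{*}\circ({}^{\sigma})^{-1}=\tau_{P^{\sigma}}^{*}$, which holds because applying $\sigma\in\textup{Gal}(M/K)$ to coordinates carries translation-by-$P$ on $E_M$ to translation-by-$P^{\sigma}$ — precisely the map $\psi$ of Lemma~\ref{cyclicextension}. This yields a homomorphism $G\to\textup{Aut}_K(M(E))$; it is injective because ${}^{\sigma}$ restricts to $\sigma$ on the constant field $M$ (faithfully, by definition of $\textup{Gal}(M/K)$) while $\tau_P^{*}$ fixes $M$ and has order exactly $p$, so no nontrivial element of $G$ acts trivially. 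Every element of $G$ fixes $K(E')$ pointwise — $\tau_P^{*}$ because it fixes all of $M(E')$, and ${}^{\sigma}$ because it fixes $1\otimes K(E)\supseteq K(E')$ — so the image is a subgroup of order $mp=[M(E):K(E')]$, whence $M(E)/K(E')$ is Galois with group $G$ and $M(E)^{G}=K(E')$. Transporting along the function-field dictionary gives the $G$-action on ${}_{K}(E_M)$ by $K$-automorphisms together with ${}_{K}(E_M)/G=E'/K$.

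Finally the remaining two quotients in \eqref{quotients_elliptic_curves} are read off from fixed subfields: $M(E)^{C_p}=M(E')$, so ${}_{K}(E_M)/C_p$ has function field $M(E')$, i.e.\ it is ${}_{K}(E'_M)$; and $M(E)^{C_m}=K(E)$ from the paragraph above, so ${}_{K}(E_M)/C_m=E/K$. I expect the only real content to be the semidirect-product relation and the injectivity above; everything else is the standard curve/function-field correspondence, the fact that the constant-field extension of a geometrically integral curve is Galois with the expected group (which is where geometric integrality of $E$ and $E'$ over $K$ enters), and the bookkeeping that the $C_m$-action is by $K$- but not $M$-automorphisms.
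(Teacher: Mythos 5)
Your proposal is correct and follows essentially the same route as the paper: define the $G$-action on the function field $M(E)$, establish the Galois diagram with intermediate fields $K(E)$ and $M(E')$ over $K(E')$, and transport everything to the curve and its quotients via \cite[Remarks A.30, A.31]{DGKM}. The only difference is that you verify the semidirect-product compatibility ${}^{\sigma}\circ\tau_P^{*}\circ({}^{\sigma})^{-1}=\tau_{P^{\sigma}}^{*}$ and the Galois property of $M(E)/K(E')$ by hand, where the paper delegates these facts to \cite[Exercise 7.8.2]{Diamond-Shurman}.
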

\begin{proof}
The group $ \langle \tau_{P} \rangle \rtimes \textup{Gal}(M/K) = C_{p} \rtimes C_m$ acts on functions in $M(E)$ via 
\[ f \xrightarrow{( \tau, \sigma )} {}^{\sigma} f \circ \tau. \]  By using \cite[Exercise 7.8.2]{Diamond-Shurman}, this fits into a Galois diagram

\begin{figure}[H]
\begin{center}\begin{tikzpicture}
    \node (Q1) at (0,0) {$K(E')$};
    \node (Q2) at (-1.5,1.3) {$M(E')$};
    \node (Q3) at (+1.5,1.7) {$K(E)$};
    \node (Q4) at (0,3) {$M(E)$};
    \draw (0.2,2.8)--node[above]{ \hspace{1cm}\tiny{$\textup{Gal}(M/K)$}}(1.4,1.9);
        \draw (-0.2,2.8)--node[above]{ \hspace{-0.7cm}\tiny{$\langle \tau_{P} \rangle$}}(-1.4,1.6);
    \draw (Q3)--(Q1);
    \draw (Q2)--(Q1);
    \end{tikzpicture} \end{center}\end{figure}  
\noindent By \cite[Remark A.31]{DGKM}, we deduce that the group $ C_{p} \rtimes C_m$ acts on ${}_{K}(E_{M})$ via $K$-automorphisms as required. In addition, for any subgroup $H \leq C_p \rtimes C_m$, the field of $H$-invariant functions in the function field of ${}_{K}(E_M)$ coincides with the function fields of the quotient curves $_{K}(E_M)/H$ in view of \cite[Remark A.30]{DGKM}. The claim regarding the various quotients follows by combining this observation with the Galois diagram above.
\end{proof}
\end{notation}
\begin{remark} \label{remark:field of definition}
For a given subgroup $D \leq C_{p} \rtimes C_m$, the geometric components of the quotient curve ${}_{K}(E_{M})/D$ are defined over  $M^{\pi(D)}$ where $\pi : C_{p} \rtimes C_m \to C_m $ denotes the natural map. Indeed, given an element $ (\tau, \sigma )$ in $C_{p} \rtimes C_m$, then $\tau$ has no effect on {constant} functions, while by construction $\sigma$ acts only on constant functions $M$ in $M(E)$. Therefore, $M(E)^{D} \cap \overline{K} = M(E)^{D} \cap M = M^{\pi(D)}$.  From this, we deduce that the geometric components of ${}_{K}(E_{M})/D$ are defined over $ M^{\pi(D)}$.

\end{remark} 

\begin{theorem} \label{pseudo_brauer_verifiability_EC} Let $G= C_{p} \rtimes C_m$ be the subgroup of $K$-automorphisms of ${}_{K}(E_M)$ afforded by Proposition \ref{G-action:ellipticCurves}. Then, $\Theta=C_m-G$ is a pseudo Brauer relation for $C_{p} \rtimes C_m$ and ${}_{K}(E_M)$ verifying an isogeny $E \to E'$.
\end{theorem}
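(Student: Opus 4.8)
The plan is to verify the two conditions in Definition \ref{brauer_verifiability_definition} for the relation $\Theta = C_m - G$ with $G = C_p \rtimes C_m$ acting on $X = {}_K(E_M)$. First I would check that $\Theta$ is genuinely a pseudo Brauer relation for $G$ and $X$, i.e.\ relative to $V_\ell(\Jac_X)$: unlike the earlier cases, $C_m - G$ is \emph{not} a Brauer relation for $G$ (the permutation representations $\mathrm{Ind}_{C_m}^G \mathbf 1$ and $\mathrm{Ind}_G^G \mathbf 1 = \mathbf 1$ differ), so here the ``pseudo'' part is essential. The remedy is to use the monodromy description of $V_\ell(\Jac_X)$ from Corollary \ref{equivariant_riemann_hurwitz_gc}: since $X/G = E'$ has genus $1$ and the cover $X \to E'$ is unramified (it is a base change of the \'etale isogeny $E \to E'$ together with the constant-field extension $M/K$), Theorem \ref{equivariant_riemann_hurwitz} gives $V_\ell(\Jac_X) \otimes \mathbb C \cong \mathbf 1^{\oplus 2} \oplus (\mathbf 1_e^*)^{\oplus 0} \oplus \bigoplus_i (\cdots)$ with no branch-point contributions, hence $V_\ell(\Jac_X) \otimes \mathbb C \cong \mathbf 1^{\oplus 2}$ — so in fact $\langle \rho, V_\ell(\Jac_X)\rangle = 0$ for every irreducible $\rho \not\cong \mathbf 1$. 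Then $\mathrm{Ind}_{C_m}^G \mathbf 1 \ominus \mathbf 1$ decomposes into non-trivial irreducibles, each of which pairs to $0$ with $V_\ell(\Jac_X)$, so setting $\rho_1 = \mathrm{Ind}_{C_m}^G \mathbf 1 \ominus \mathbf 1$ and $\rho_2 = 0$ exhibits $\Theta$ as a pseudo Brauer relation relative to $V_\ell(\Jac_X)$.

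Next I would identify the quotient curves with the required Jacobian factors. By Proposition \ref{G-action:ellipticCurves}, the quotients appearing in $\Theta$ are $X/C_m = E$ and $X/G = E'$, so the induced isogeny \eqref{isogeny_from_relation} reads $\Jac_{X/G} \to \Jac_{X/C_m}$, i.e.\ $\Jac_{E'} \to \Jac_E$, which (since $E,E'$ are their own Jacobians) is exactly an isogeny $E' \to E$; dualising, or simply re-reading the relation as $\Theta = C_m - G$ with the Kani--Rosen map oriented the other way, gives the isogeny $E \to E'$. For the bookkeeping of Definition \ref{brauer_verifiability_definition} I take $m=1$, $Y_1 = E$, $H_1 = C_m$ on the source side and $r=1$, $Y_1' = E'$, $H_1' = G$ on the target side; condition (1) is Proposition \ref{G-action:ellipticCurves}, and condition (2) is the tautology $\Theta = C_m - G$.

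**Main obstacle.** The one genuinely non-routine point is confirming that $X \to E'$ is unramified so that the equivariant Riemann--Hurwitz formula produces \emph{no} branch contributions; this is what forces $V_\ell(\Jac_X)$ to be a multiple of the trivial representation and is exactly what makes $C_m - G$ a pseudo Brauer relation despite not being a Brauer relation. Geometrically, $X = {}_K(E_M)$ is (over $\overline K$) a disjoint union of copies of $E_{\overline K}$, the map to $X/G = E'$ is \'etale of degree $|G| = pm$ (the $C_p$ part is the \'etale $p$-isogeny $E \to E'$, the $C_m$ part permutes the $\overline K$-components), and an \'etale cover of curves is unramified — so Corollary \ref{equivariant_riemann_hurwitz_gc} applies with $r = 0$ and $g(E') = 1$, yielding $V_\ell(\Jac_X) \otimes \mathbb C \cong \mathbf 1^{\oplus 2}$. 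Once this is in hand everything else is immediate, and the proof concludes: $\Theta$ is a pseudo Brauer relation for $G$ and ${}_K(E_M)$ verifying the isogeny $E \to E'$. \qedhere
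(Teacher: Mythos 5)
There is a genuine gap. Your key step invokes Corollary \ref{equivariant_riemann_hurwitz_gc} for the cover ${}_K(E_M)\to E'$, but that corollary (like Theorem \ref{equivariant_riemann_hurwitz}) requires the top curve to be geometrically connected, and ${}_K(E_M)$ is not geometrically connected once $M\neq K$: as you yourself observe, over $\overline K$ it is a disjoint union of $m=[M:K]$ copies of $E_{\overline K}$. The conclusion you extract from it, $V_\ell(\textup{Jac}_{{}_K(E_M)})\otimes\mathbb{C}\cong\mathbf{1}^{\oplus 2}$, is in fact false for $m>1$: by \cite[Lemma A.22]{DGKM} the Jacobian is $\textup{Res}_{M/K}E_M$, which has dimension $m$, so $V_\ell$ has dimension $2m$; indeed the paper's Proposition \ref{prop:tatemodule} shows $V_\ell\otimes\mathbb{C}\cong(\textup{Ind}_{C_p}^{G}\mathbf{1})^{\oplus 2}$, which contains every character lifted from the $C_m$-quotient with multiplicity $2$. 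In particular your assertion that every non-trivial irreducible pairs to zero with $V_\ell$ is wrong, so the argument as written does not establish that $\Theta=C_m-G$ is a pseudo Brauer relation.

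The statement is salvageable, and the correct repair is both smaller and simpler than what you attempted: one only needs $\langle T,V_\ell\rangle=0$ for the complement $T=\textup{Ind}_{C_m}^{G}\mathbf{1}\ominus\mathbf{1}$, not the vanishing of all non-trivial multiplicities in $V_\ell$. The paper gets this without computing $V_\ell$ at all: by Frobenius reciprocity,
\[
\langle V_\ell, T\rangle=\dim V_\ell^{C_m}-\dim V_\ell^{G}=2\dim \textup{Jac}_{{}_K(E_M)/C_m}-2\dim \textup{Jac}_{{}_K(E_M)/G}=2\dim E-2\dim E'=0,
\]
using the quotient identifications \eqref{quotients_elliptic_curves} from Proposition \ref{G-action:ellipticCurves} and \cite[Lemma 3.18]{DGKM}. (Alternatively one can use the correct description $V_\ell\otimes\mathbb{C}\cong(\textup{Ind}_{C_p}^{G}\mathbf{1})^{\oplus 2}$ and note that no non-trivial character lifted from $C_m$ occurs in $\textup{Ind}_{C_m}^{G}\mathbf{1}$.) Your remaining bookkeeping — the identification of the quotient curves, the choice $\rho_1=0$, $\rho_2=T$, and the orientation of the induced isogeny $E\to E'$ — is fine, as is your observation that $C_m-G$ is not an honest Brauer relation, so the ``pseudo'' notion is genuinely needed here.
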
 

\begin{proof}
We write $V_{\ell}$ to denote the $G$-representation of the $\ell$-adic Tate module of the Jacobian of ${}_{K}(E_M)$. We decompose $\mathbb{C}[G/C_m] \cong \mathbb{C}[G/G] \oplus T,$ where $T$ is some complementary $G$-representation. Then, $\langle V_{\ell} ,T \rangle$ satisfies
\begin{equation}
 \label{morally-zeroness}
\langle V_{\ell}, T\rangle 
\underset{\mathrm{Frob. rec.}}{=}\textup{dim} \ V_{\ell}^{C_m}  - \textup{dim} \ V_{\ell}^{G}  \underset{ \eqref{quotients_elliptic_curves}}{=}\textup{dim} \ V_{\ell}(E)-\textup{dim} \ V_{\ell} (E')=0.
\end{equation} Therefore, $\Theta=C_m-G$ is a pseudo Brauer relation for $C_{p} \rtimes C_m$ and ${}_{K}(E_M)$. By invoking \eqref{quotients_elliptic_curves} again, the claim regarding pseudo Brauer verifiability follows. \qedhere
\end{proof}
We now find a description for the $G$-module structure on $\Omega^{1}$ of the Jacobian of ${}_{K}(E_M)$. Using \cite[Lemma A.22]{DGKM}, we deduce that $\textup{Jac}_{{}_{K}(E_M)}=\textup{Res}_{M/K} E_M$. 

\begin{lemma} \label{regular_represenation_omega_1} 
There exists an isomorphism of $C_m = \textup{Gal}(M/K)$-modules,
\[\Omega^{1}(\textup{Res}_{M/K}E_{M}) \otimes \mathbb{C} \cong \textup{Ind}_{\{e\}}^{C_m} \mathbf{1}. \]
\end{lemma}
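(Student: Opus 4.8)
The plan is to identify $\Omega^1(\Res_{M/K}E_M)$ as a $K$-vector space with its $\textup{Gal}(M/K)$-action and recognise the latter as the regular representation. First I would recall the standard fact that for a Weil restriction $\Res_{M/K}A$ of an abelian variety $A$ along a finite separable extension $M/K$, the associated $K$-vector space of invariant differentials satisfies
\[ \Omega^1(\Res_{M/K}A) \cong \Omega^1(A)\otimes_M M, \]
where on the right $M$ is viewed as a $K$-vector space; this follows from the adjunction defining the Weil restriction together with the fact that $\Omega^1$ of a product of the Galois conjugates of $A$ decomposes accordingly (concretely, $(\Res_{M/K}A)\times_K \bar K \cong \prod_{\sigma\colon M\hookrightarrow \bar K} A^\sigma$, and $\Omega^1$ is the direct sum of the $\Omega^1(A^\sigma)$, permuted by $\textup{Gal}(M/K)$ through its action on the embeddings). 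In our situation $A = E_M$ is a genus $1$ curve over $M$, so $\Omega^1(E_M)$ is $1$-dimensional over $M$; hence after extending scalars to $\mathbb{C}$ along a fixed embedding $M\hookrightarrow\mathbb{C}$, the $\mathbb{C}[C_m]$-module $\Omega^1(\Res_{M/K}E_M)\otimes\mathbb{C}$ is just $M\otimes_K\mathbb{C}$ with its natural $\textup{Gal}(M/K)$-action.

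The second step is then purely representation-theoretic: for any finite Galois extension $M/K$ with group $\Gamma$, the $\mathbb{C}[\Gamma]$-module $M\otimes_K\mathbb{C}$ is isomorphic to $\textup{Ind}_{\{e\}}^{\Gamma}\mathbf 1$, the regular representation. This is the normal basis theorem (equivalently, the computation that the character of $M\otimes_K\mathbb{C}$ at $\sigma\in\Gamma$ is $\#\{$embeddings fixed by $\sigma\}$, which is $[M:K]$ if $\sigma=e$ and $0$ otherwise, matching the character of the regular representation). Taking $\Gamma = \textup{Gal}(M/K)\cong C_m$ gives the claimed isomorphism $\Omega^1(\Res_{M/K}E_M)\otimes\mathbb{C}\cong\textup{Ind}_{\{e\}}^{C_m}\mathbf 1$.

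I expect the only real subtlety to be pinning down the first isomorphism $\Omega^1(\Res_{M/K}E_M)\cong\Omega^1(E_M)\otimes_M M$ as $\textup{Gal}(M/K)$-modules with the correct action — in particular making sure the $C_m$-action coming from the $K$-automorphism group of ${}_K(E_M)$ (as set up in Proposition \ref{G-action:ellipticCurves}, where $\sigma$ acts only on constant functions) really matches the Galois action on the scalar factor $M$. One clean way to see this is to pass to $\bar K$-points: base-changing, ${}_K(E_M)\times_K\bar K$ is the disjoint union of the conjugates $E^\sigma$ over the embeddings $M\hookrightarrow\bar K$, the $C_m$-action permutes these components simply transitively, and $\Omega^1$ of the whole thing is the permutation module on the components, which is exactly $\textup{Ind}_{\{e\}}^{C_m}\mathbf 1$ after choosing a basepoint component. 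Everything else is routine, and the genus $1$ hypothesis enters only to guarantee $\dim_M\Omega^1(E_M)=1$ so that no extra multiplicity appears. Alternatively, one can simply cite \cite[Lemma A.22]{DGKM} (already invoked to identify $\textup{Jac}_{{}_K(E_M)}$ with $\textup{Res}_{M/K}E_M$) together with the general behaviour of $\Omega^1$ under Weil restriction and the normal basis theorem.
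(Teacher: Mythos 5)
Your proposal is correct, and the ``clean way'' you describe---base-changing so that $\textup{Res}_{M/K}E_M$ (equivalently ${}_K(E_M)$) splits into the Galois conjugates of $E$, which $C_m=\textup{Gal}(M/K)$ permutes simply transitively, so that $\Omega^1$ becomes the permutation module on the components, i.e.\ the regular representation---is exactly the paper's argument. Your primary route, identifying $\Omega^1(\textup{Res}_{M/K}E_M)$ with the underlying $K$-vector space of the one-dimensional $M$-space $\Omega^1(E_M)$ and invoking the normal basis theorem, is just an equivalent repackaging of the same computation.
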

\begin{proof}
As an abelian variety over $M$, $\textup{Res}_{M/K} E_{M}$ decomposes into a product $\prod_{g } E^{g},$ where $g$ runs over the elements of $\textup{Gal}(M/K).$ Then,  $\textup{Gal}(M/K)$ acts on $\textup{Res}_{M/K} E_{M}$, and the corresponding action on $\prod_{g } E^{g}$ permutes the components of the product. In particular, $h \in \textup{Gal}(M/K)$ acts via mapping $E^{g} \to E^{g h}$. Therefore, $\Omega^{1}(\textup{Res}_{M/K}E_{M}) \otimes M \cong \textup{Ind}_{\{e\}}^{C_m} \mathbf{1}$.
\end{proof}
\begin{proposition}\label{prop:tatemodule}For any prime number $\ell$, the $G$ representations
\[V_{\ell}(\textup{Res}_{M/K}{E_{M}}) \ \ \textup{and} \ \ (\textup{Ind}_{C_p}^{G} \mathbf{1} )^{\oplus 2} \] are isomorphic representations after extending scalars to $\mathbb{C}$.  In addition, \[\Omega^{1}(\textup{Res}_{M/K}{E_{M}})\otimes \mathbb{C} \cong \textup{Ind}_{C_p}^{G} \mathbf{1}.\] 
\end{proposition}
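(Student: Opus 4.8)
The strategy is to reduce everything to the decomposition of $\textup{Res}_{M/K}E_{M}$ into conjugate copies of $E$ already used in the proof of Lemma~\ref{regular_represenation_omega_1}, together with the elementary observation that $C_{p}$ acts by translations. Recall (as noted just before Lemma~\ref{regular_represenation_omega_1}) that $\textup{Jac}_{{}_{K}(E_{M})} = \textup{Res}_{M/K}E_{M}$, and that over $M$ this abelian variety equals $\prod_{g\in\textup{Gal}(M/K)}E^{g}$, with $C_{m}=\textup{Gal}(M/K)$ permuting the factors regularly via $h\colon E^{g}\to E^{gh}$ and with the generator $\tau_{P}$ of $C_{p}$ acting on each factor through the translation-by-$P$ automorphism of $E$. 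First I would record that translation by a point induces the identity on $\textup{Pic}^{0}$ of an abelian variety (theorem of the square), hence that $\tau_{P}$ acts trivially on $\Omega^{1}(\textup{Res}_{M/K}E_{M})$ and on $V_{\ell}(\textup{Res}_{M/K}E_{M})$. Consequently both $G$-representations in question are inflated from $G/C_{p}\cong C_{m}$, and it remains only to identify their restrictions to $C_{m}$.

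For the regular differentials, Lemma~\ref{regular_represenation_omega_1} already identifies the $C_{m}$-representation $\Omega^{1}(\textup{Res}_{M/K}E_{M})\otimes\mathbb{C}$ with $\textup{Ind}_{\{e\}}^{C_{m}}\mathbf{1}=\mathbb{C}[C_{m}]$; inflating along $G\twoheadrightarrow C_{m}$ and using $\mathbb{C}[G/C_{p}]\cong\textup{Ind}_{C_{p}}^{G}\mathbf{1}$ (valid as $C_{p}\trianglelefteq G$) gives $\Omega^{1}(\textup{Res}_{M/K}E_{M})\otimes\mathbb{C}\cong\textup{Ind}_{C_{p}}^{G}\mathbf{1}$. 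For the Tate module I would run the same computation one dimension higher: $V_{\ell}\bigl(\prod_{g}E^{g}\bigr)=\bigoplus_{g}V_{\ell}(E^{g})$ is a direct sum of $2$-dimensional spaces permuted regularly by $C_{m}$, so exactly as in the proof of Lemma~\ref{regular_represenation_omega_1} it is isomorphic, as a $C_{m}$-representation, to $\textup{Ind}_{\{e\}}^{C_{m}}(\mathbf{1}^{\oplus 2})=\mathbb{C}[C_{m}]^{\oplus 2}$; since $C_{p}$ acts trivially, inflating gives $V_{\ell}(\textup{Res}_{M/K}E_{M})\otimes\mathbb{C}\cong(\textup{Ind}_{C_{p}}^{G}\mathbf{1})^{\oplus 2}$. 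Alternatively, once the $\Omega^{1}$ statement is in hand one can deduce the Tate-module statement from the Hodge-type isomorphism $V_{\ell}(A)\otimes\mathbb{C}\cong\Omega^{1}(A)^{\ast}\oplus\overline{\Omega^{1}(A)^{\ast}}$ of $G$-representations (after any embedding $K\hookrightarrow\mathbb{C}$), noting that $\textup{Ind}_{C_{p}}^{G}\mathbf{1}$ is a rational permutation representation, hence equal to its own dual and its own complex conjugate.

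The step I expect to require the most care is the claim that $C_{p}$ acts trivially on $V_{\ell}$ and on $\Omega^{1}$: one must be precise about the way the curve-automorphism $\tau_{P}$ of ${}_{K}(E_{M})$ induces an endomorphism of $\textup{Jac}_{{}_{K}(E_{M})}=\textup{Res}_{M/K}E_{M}$ before invoking the theorem of the square. It is also worth emphasising that ${}_{K}(E_{M})$ is \emph{not} geometrically connected, so that Theorem~\ref{equivariant_riemann_hurwitz} and Corollary~\ref{equivariant_riemann_hurwitz_gc} are unavailable in this situation; this is precisely why arguing through the Weil-restriction decomposition, rather than through monodromy, is the cleaner route here.
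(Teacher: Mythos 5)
Your proof is correct, but it follows a genuinely different route from the paper's. Your key step --- that the translation $\tau_{P}$ induces the identity on $\textup{Jac}_{{}_{K}(E_{M})}=\textup{Res}_{M/K}E_{M}$ because translations act trivially on $\textup{Pic}^{0}$ of an abelian variety, so that both $V_{\ell}$ and $\Omega^{1}$ are inflated from $G/C_{p}\cong C_{m}$ and are then identified by the regular-representation computation of Lemma \ref{regular_represenation_omega_1} and its Tate-module analogue --- does not appear in the paper. The paper instead characterises $V_{\ell}$ as the unique representation with rational character values whose $D$-invariants have dimension $2\dim\textup{Jac}_{{}_{K}(E_{M})/D}$ for every $D\leq G$ (remodelling \cite[Theorem 3.3]{etalecohomology_of_hyperelliptic_curves} via \cite[Lemma 3.18]{DGKM}), and then checks that $(\textup{Ind}_{C_{p}}^{G}\mathbf{1})^{\oplus 2}$ has the same invariant dimensions through the double-coset count $\#D\backslash G/C_{p}=[M^{\pi(D)}:K]$ combined with Remark \ref{remark:field of definition}; for $\Omega^{1}$ it uses that $\Omega^{1}$ is a subrepresentation of $V_{\ell}$ (\cite[Lemma 5.1(1)]{DGKM}) and then Lemma \ref{regular_represenation_omega_1} to force each lifted character $\tilde{\chi}_{i}$ to occur with multiplicity one. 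Your argument is shorter and more conceptual, as it explains directly why only representations inflated from $C_{m}$ can occur (a fact the paper's statement encodes but never isolates); the paper's invariant-dimension argument has the advantage of staying inside its general machinery of genera and dimensions of quotient curves and of not requiring one to trace how the curve automorphism $\tau_{P}$ acts on the Jacobian --- the one point you rightly flag as needing care, and which you handle correctly. Your closing observation that ${}_{K}(E_{M})$ is not geometrically connected, so Corollary \ref{equivariant_riemann_hurwitz_gc} is unavailable and one must argue through the Weil-restriction decomposition, is also accurate and matches why the paper avoids the monodromy route here.
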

\begin{proof}
For brevity, we write $A$ to denote $\textup{Res}_{M/K}E_{M}$ and $V_{\ell}$ (resp. $\Omega^{1}$) for the G-representations obtained from $V_{\ell}(A)$ (resp. $\Omega^{1}(A)$) after extending scalars to $\mathbb{C}$. By \cite[Lemma 3.18]{DGKM}, $V_{\ell}$ satisfies $ \textup{dim}(V_{\ell})^{D} = 2\cdot \textup{dim}(\textup{Jac}_{_{K}(E_{M})/D})$ for any subgroup $D$ of $G$. By remodelling the argument presented in \cite[Theorem 3.3]{etalecohomology_of_hyperelliptic_curves}, we deduce that $V_{\ell}$ is the unique representation satisfying the following.
\begin{enumerate}
\item all character values of $G$ on $V_{\ell}$ are rational
\item and $ \textup{dim}(V_{\ell})^{D} = 2\cdot \textup{dim}(\textup{Jac}_{_{K}(E_{M})/D})$.
\end{enumerate}
 The dimension of $D$-invariant vectors of $\textup{Ind}_{C_p}^{G} \mathbf{1}$ is 
\begin{align*}
\textup{dim}\ (\textup{Ind}_{C_p}^{G} \mathbf{1})^{D} &= \langle \textup{Ind}_{C_p}^{G} \mathbf{1} ,\textup{Ind}_{D}^{G}\mathbf{1}\rangle = \langle \mathbf{1}, \textup{Res}_{C_{p}}^{G} \textup{Ind}_{D}^{G} \mathbf{1} \rangle 
= \# D \backslash G / C_p.
\end{align*}
In addition, we deduce that $\# D \backslash G / C_p = \frac{\#\textup{Gal}(M/K)}{\#\pi(D)} = [M^{\pi(D)}:K]$ which agrees with the dimension of the Jacobian of ${}_{K}(E_M)/D$ in view of Remark \ref{remark:field of definition}. Since $(\textup{Ind}_{C_p}^{G} \mathbf{1})^{\oplus 2}$ is realisable over $\mathbb{Q}$, both conditions listed are satisfied for $(\textup{Ind}_{C_p}^{G} \mathbf{1})^{\oplus 2}$. By uniqueness, $V_{\ell} \cong (\textup{Ind}_{C_p}^{G} \mathbf{1})^{\oplus 2}$ as required. 
For the claim regarding $\Omega^{1}$, we begin by decomposing $V_{\ell}$ into irreducible representations, say
\[V_{\ell} \cong \bigoplus_{i=1}^{m} \tilde{\chi}_{i}^{\oplus 2}, \]
where each $\tilde{\chi}_{i}$ is a lift of a $1$-dimensional character $\chi_{i}$ from the $G/C_{p}\cong C_{m}$ quotient. By \cite[Lemma 5.1(1)]{DGKM}, we deduce that $\Omega^{1}$ is a subrepresentation of $V_{\ell}$ and therefore, we can decompose $\Omega^{1} \cong \bigoplus_{i \in I} \tilde{\chi_i}^{a_i}$, for a subset $I \subseteq \{1,2,\dots, m\}$ and $a_{i} \in \{0,1,2\}$. By Lemma \ref{regular_represenation_omega_1}, $\textup{Res}_{C_m}^{G} \Omega^{1} \cong \textup{Ind}_{\{e\}}^{C_m} \mathbf{1} \cong \bigoplus_{i=1}^{m} \chi_{i}$. Since  $\textup{Res}_{C_m}^{G} \tilde{\chi}_{i} = \chi_{i}$ we deduce that $I = \{1,2,\dots,m\}$, and $a_i=1$ for all $i \in I$. Therefore,
$\Omega^{1} \cong \oplus_{i=1}^{m} \tilde{\chi}_i \cong \textup{Ind}_{C_p}^{G} \mathbf{1}$ as required.
\qedhere
\end{proof}

\begin{theorem} \label{local_formula_EC}Let $E/K$ be an elliptic curve which admits a $K$-rational cyclic isogeny of prime degree $p$. Then,

\[\textup{rk}_{p}(E/K)= \sum_{v \ \textup{place of} \ K} \textup{ord}_{p} \ \Lambda_{\Theta}({}_{K}(E_{M}) / K_{v}) \mod 2 ,\]
where $\Theta$ is the pseudo Brauer relation for $C_{p} \rtimes C_m$ and ${}_{K}(E_{M})$ afforded by Theorem \ref{pseudo_brauer_verifiability_EC}.\end{theorem}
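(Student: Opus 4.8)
The plan is to apply Theorem \ref{regulator_constant_rank_parity} to the pseudo Brauer relation $\Theta = C_m - G$ for $G = C_p \rtimes C_m$ acting on the curve ${}_K(E_M)$, exactly as afforded by Theorem \ref{pseudo_brauer_verifiability_EC}, with the prime being the fixed prime $p$. To do so, I must first verify the hypothesis of Theorem \ref{regulator_constant_rank_parity}, namely that $\Omega^1(\textup{Jac}_{{}_K(E_M)})$ is self-dual as a $G$-representation; this is immediate from Proposition \ref{prop:tatemodule}, which identifies it with the permutation module $\textup{Ind}_{C_p}^G \mathbf{1}$ (and in fact realisable over $\mathbb{Q}$, so Definition \ref{def:lemma:local_invariants} applies). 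The substance of the proof is then to compute the special set $S_{\Theta,p}$ and to identify the left-hand sum $\sum_{\tau \in S_{\Theta,p}} \langle \tau, \mathcal{X}_p(\textup{Jac}_{{}_K(E_M)}) \rangle$ with $\textup{rk}_p(E/K)$.

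First I would pin down which representations can possibly lie in $S_{\Theta,p}$. The relation $\Theta = C_m - G$ is the difference of the two permutation characters $\textup{Ind}_{C_m}^G \mathbf{1} - \textup{Ind}_G^G \mathbf{1}$; since $G/C_p \cong C_m$, we have $\textup{Ind}_{C_m}^G \mathbf{1} = \mathbf{1} \oplus T$ where $T$ is the sum of the non-trivial characters $\tilde\chi$ inflated from $C_m$ (each one-dimensional). Thus $S_{\Theta,p}$ consists only of such inflated one-dimensional characters $\tilde\chi$ with $\textup{ord}_p \mathcal{C}_\Theta(\tilde\chi)$ odd. For the multiplicity side, Proposition \ref{prop:tatemodule} gives $V_\ell \cong (\textup{Ind}_{C_p}^G \mathbf{1})^{\oplus 2} \cong \bigoplus_i \tilde\chi_i^{\oplus 2}$, and since $\mathcal{X}_p(\textup{Jac}_{{}_K(E_M)})$ is (after extending scalars, by \cite[Lemma 5.1]{DGKM}) a subrepresentation of $V_\ell$, it decomposes as $\bigoplus_i \tilde\chi_i^{b_i}$ with $0 \le b_i \le 2$. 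Taking $C_m$-invariants and using Shapiro/Frobenius reciprocity together with the quotient identifications ${}_K(E_M)/C_m = E$ from \eqref{quotients_elliptic_curves}, one finds $\sum_i b_i = \dim \mathcal{X}_p^{C_m} = \textup{rk}_p(E/K)$; similarly $b_1$ (the trivial-character multiplicity) equals $\textup{rk}_p(E'/K)$, and these are equal since $E$ and $E'$ are isogenous. Hence the left-hand side of Theorem \ref{regulator_constant_rank_parity}, restricted to the non-trivial inflated characters, is exactly $\textup{rk}_p(E/K) - \textup{rk}_p(E'/K)$ plus contributions that vanish once we intersect with $S_{\Theta,p}$; I will need to argue that precisely those $\tilde\chi$ with $\mathcal{C}_\Theta(\tilde\chi)$ of odd $p$-valuation contribute, and that their total multiplicity in $\mathcal{X}_p$ equals $\textup{rk}_p(E/K)$ modulo $2$.

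The key computation — and the step I expect to be the main obstacle — is the regulator constant $\mathcal{C}_\Theta(\tilde\chi)$ for each one-dimensional character $\chi$ of $C_m$ inflated to $G$. For a one-dimensional $\tilde\chi$, the invariant spaces $\mathcal{V}^{C_m}$ and $\mathcal{V}^{G}$ are one-dimensional or zero, and one computes $\mathcal{C}_\Theta(\tilde\chi)$ as a ratio of the scalars $\tfrac{1}{|C_m|}\Langle v,v\Rangle$ and $\tfrac{1}{|G|}\Langle w,w\Rangle$ for suitable invariant vectors, which boils down to $\tfrac{|G|}{|C_m|} = p$ up to squares when the relevant spaces are non-zero and $1$ otherwise. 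So I anticipate $\mathcal{C}_\Theta(\tilde\chi) \equiv p \bmod \mathbb{Q}^{\times 2}$ for every non-trivial $\tilde\chi$ (these have $\langle \tilde\chi, \textup{Ind}_{C_m}^G\mathbf 1\rangle = 1$, $\langle \tilde\chi, \mathbf 1\rangle = 0$), giving $\textup{ord}_p \mathcal{C}_\Theta(\tilde\chi) = 1$, so that $S_{\Theta,p} = \{\tilde\chi_i : i \ne 1\}$; whereas $\mathcal{C}_\Theta(\mathbf{1})$ is a square. Granting this, $\sum_{\tau \in S_{\Theta,p}}\langle \tau, \mathcal{X}_p\rangle = \sum_{i \ne 1} b_i = \textup{rk}_p(E/K) - \textup{rk}_p(E'/K) \equiv \textup{rk}_p(E/K) \bmod 2$, using $\textup{rk}_p(E/K) = \textup{rk}_p(E'/K)$ because the $p$-isogeny $E \to E'$ induces an isomorphism on $\mathcal{X}_p$ up to finite kernel and cokernel. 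Feeding this into Theorem \ref{regulator_constant_rank_parity} with the prime $p$ yields
\[
\textup{rk}_p(E/K) \equiv \sum_{v} \textup{ord}_p \Lambda_\Theta({}_K(E_M)/K_v) \bmod 2,
\]
which is the assertion. The only genuinely delicate points are the careful bookkeeping of which $\tilde\chi$ survive into $S_{\Theta,p}$ (handled by the regulator-constant computation above, possibly streamlined using Lemma \ref{trivial_regulator_constant}) and confirming $\textup{rk}_p(E) = \textup{rk}_p(E')$ for the isogenous pair; both are routine once set up.
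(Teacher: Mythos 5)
Your overall strategy (apply Theorem \ref{regulator_constant_rank_parity} to $\Theta=C_m-G$ with the prime $p$, after checking self-duality of $\Omega^1$ via Proposition \ref{prop:tatemodule}) is the paper's, but the central computation is carried out incorrectly, and the errors do not cancel. The first slip is the decomposition $\textup{Ind}_{C_m}^{G}\mathbf{1}=\mathbf{1}\oplus T$ with $T$ a sum of non-trivial one-dimensional characters inflated from $C_m$: that is the decomposition of $\textup{Ind}_{C_p}^{G}\mathbf{1}$ (induction from the \emph{normal} subgroup $C_p$), not of $\textup{Ind}_{C_m}^{G}\mathbf{1}$. Since $C_m$ is not normal in $G=C_p\rtimes C_m$, Frobenius reciprocity gives $\langle\tilde\chi,\textup{Ind}_{C_m}^{G}\mathbf{1}\rangle=\langle\chi,\mathbf{1}\rangle_{C_m}=0$ for every non-trivial inflated $\tilde\chi$; in fact $\textup{Ind}_{C_m}^{G}\mathbf{1}\cong\mathbf{1}\oplus(\text{sum of the }(p-1)/m\text{ irreducibles of dimension }m)$. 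Consequently your regulator constants are backwards: for non-trivial inflated $\tilde\chi$ both $\tilde\chi^{C_m}$ and $\tilde\chi^{G}$ vanish, so $\mathcal{C}_{\Theta}(\tilde\chi)=1$, while for the trivial character both invariant spaces are one-dimensional and $\mathcal{C}_{\Theta}(\mathbf{1})\equiv\frac{1/m}{1/(pm)}\equiv p\bmod \mathbb{Q}^{\times2}$. So $S_{\Theta,p}\subseteq\{\mathbf{1}\}$, exactly the opposite of your claim $S_{\Theta,p}=\{\tilde\chi_i: i\neq1\}$, $\mathbf{1}\notin S_{\Theta,p}$.

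The multiplicity bookkeeping is also off: with $\mathcal{X}_p\cong\bigoplus_i\tilde\chi_i^{\,b_i}$, only the trivial inflated character has $C_m$-invariants, so $\dim\mathcal{X}_p^{C_m}=b_1$, not $\sum_i b_i$ (the latter is $\dim\mathcal{X}_p^{C_p}=\textup{rk}_p(E'/M)$, a rank over $M$). Your closing chain ``$\sum_{i\neq1}b_i=\textup{rk}_p(E/K)-\textup{rk}_p(E'/K)\equiv\textup{rk}_p(E/K)$, using $\textup{rk}_p(E/K)=\textup{rk}_p(E'/K)$'' is self-contradictory: granting the equality of ranks it would force $0\equiv\textup{rk}_p(E/K)\bmod2$. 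The correct conclusion is reached as in the paper: $\mathcal{C}_{\Theta}(\mathbf{1})\equiv p$ puts $\mathbf{1}$ in $S_{\Theta,p}$ (when it occurs in $\mathcal{X}_p$); the $m$-dimensional constituents of $\mathbb{C}[G/C_m]$ have multiplicity zero in $\mathcal{X}_p$ by the same computation as \eqref{morally-zeroness} (using $\textup{rk}_p(E)=\textup{rk}_p(E')$ for the isogenous pair), and every other irreducible has trivial regulator constant by Lemma \ref{trivial_regulator_constant}(1); hence $S_{\Theta,p}=\{\mathbf{1}\}$ and $\sum_{\tau\in S_{\Theta,p}}\langle\tau,\mathcal{X}_p\rangle=\dim\mathcal{X}_p^{G}=\textup{rk}_p(E'/K)=\textup{rk}_p(E/K)$, which then feeds into Theorem \ref{regulator_constant_rank_parity}.
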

\begin{proof} We write $\mathcal{X}_{p}$ to denote $\mathcal{X}_{p}(\textup{Res}_{M/K}E_{M})$. We show that $S_{\Theta,p} = \{ \mathbf{1} \}$. By choosing the trivial pairing on $\mathbf{1}$, we deduce that $\mathcal{C}_{\Theta}(\mathbf{1}) \equiv \frac{m}{pm}\equiv p \mod \mathbb{Q}^{\times 2}$, from which we deduce $\{1\} \subseteq S_{\Theta,p}$. By Lemma \ref{trivial_regulator_constant}(1), showing equality will follow upon showing that $\mathcal{X}_{p}$ and $\mathbb{C}[G/C_m]$ share no common irreducible constituents other than $\mathbf{1}$. This follows as in \eqref{morally-zeroness}. Finally, the local expression asserted by the claim follows upon applying Theorem \ref{regulator_constant_rank_parity} to $\Theta$ afforded by Theorem \ref{pseudo_brauer_verifiability_EC}. We note that Theorem \ref{regular_represenation_omega_1} is applicable in view of Proposition \ref{prop:tatemodule} which proves that $\Omega^{1}$ is self-dual as a $G$-representation. \qedhere
\end{proof}
\section{Genus $2$ curves with elliptic subcovers}\label{Sec_genus2}

\subsection{Genus $2$ curves admitting a map to an elliptic curve}\label{subsec_EC}
In this section, we provide a local expression for the parity of the $p^{\infty}$-Selmer rank for the Jacobian of a genus $2$ curve $Y/K$ which admits an elliptic subcover of prime degree $p$, equivalently a $K$-rational morphism $\phi: Y \to E$  to an elliptic curve $E/K$. As a result, its Jacobian, $\textup{Jac}_{Y}$, is isogenous to $E \times E'$ for some complementary elliptic curve $E'$. As noted in \cite{Shioda}, the complimentary elliptic curve $E'$ is not uniquely determined.

This section is designed to accomplish three distinct goals. Firstly, we establish a canonical choice for the complementary elliptic curve, referred to as $E'$, under specific generality assumptions regarding the parameter $\phi$

Under some generality assumptions on $\phi$ (see Definition \ref{def:generic} below), we provide a canonical choice for the complementary elliptic curve $E'$, then we prove pseudo Brauer verifiability for the isogeny $\textup{Jac}_{Y} \to E \times E'$, and finally we use regulator constants to obtain a local expression for the parity of the $p^{\infty}$-Selmer rank of $\textup{Jac}_{Y}$. A central part of our argument is the following commutative diagram
 \begin{center}

\begin{tikzcd}
Y \arrow[r, "\phi"] \arrow[d, "\pi_{Y}"] & E \arrow[d, "\pi_{E}"] \\
\mathbb{P}_{x}^{1} \arrow[r, "\Phi"]   & \mathbb{P}_{u}^{1}         
\end{tikzcd}
\end{center}
where $\pi_Y$ and $\pi_E$ are the natural hyperelliptic covers of $Y$ and $E$, and $\Phi$ is the unique map making this diagram commute. As in \S\ref{General_notation}, we write $\mathbb{P}^{1}_{x}$ and $\mathbb{P}^{1}_{u}$ to indicate the choice of a parameter $x$ and $u$ on $\mathbb{P}^{1}/K$.

\subsection{Elliptic subcovers of even prime degree}
For ease of exposition, we first illustrate how we utilise this commutative diagram in the case when $p=2$.

\begin{proposition} \label{elliptic_involution}
Let $\phi: Y \to E$ be a degree $2$ morphism induced by the involution $r$ on $Y$. Writing $h_{Y}$ to denote the hyperelliptic involution, then $C_2 \times C_2 \cong \langle r , h_{Y} \rangle $ acts on $Y$ via $K$-automorphisms. This fits into a Galois diagram

\begin{center}
\begin{tikzcd}
                                                    & Y \arrow[ld, "\phi"', no head] \arrow[d, no head] \arrow[rd, "\pi_Y", no head] &                                                                  \\
E=Y/\langle r \rangle \arrow[rd, "\pi_E"', no head] & Y/\langle rh_Y \rangle \arrow[d, no head]                                   & \mathbb{P}^{1}=Y/\langle h_Y \rangle \arrow[ld, "\Phi", no head] \\
                                                    & {\mathbb{P}^{1}=Y/\langle r, h_Y \rangle}                                      &                                                                 
\end{tikzcd}
\end{center}
 In addition, $\{e\}+2(C_2\times C_2) - \langle r \rangle-\langle r h_Y \rangle-\langle h_Y \rangle$ is a pseudo Brauer relation for $C_{2}\times C_{2}$ and $Y$ which verifies an isogeny \[\textup{Jac}_{Y} \to E \times \textup{Jac}_{Y/\langle rh_{Y} \rangle}.\] Finally, we have
 \[\textup{rk}_{2}(\textup{Jac}_{Y}) \equiv \sum_{v \ \textup{place of} \ K} \textup{ord}_{2} \ \Lambda_{\Theta}(Y/K_v) \mod 2. \]
\end{proposition}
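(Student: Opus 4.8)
The plan is to mirror the three-step pattern of Theorems \ref{pseudo_brauer_KT} and \ref{local_expression_KT}: first construct the $C_2\times C_2$-action together with its quotients, then identify the Brauer relation and read off the isogeny, and finally invoke Theorem \ref{regulator_constant_rank_parity} with $p=2$.

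For the group action, I would first note that since $Y$ has genus $2$ it is hyperelliptic with a \emph{unique} hyperelliptic involution, so $rh_Yr^{-1}$ is again hyperelliptic and therefore equals $h_Y$; thus $r$ and $h_Y$ commute, and since $r$ has order $2$ with $r\neq h_Y$ (the quotients $Y/\langle r\rangle=E$ and $Y/\langle h_Y\rangle$ have genus $1$ and $0$), the group $\langle r,h_Y\rangle$ is a Klein four-group acting on $Y$ by $K$-automorphisms. Then $Y/\langle r\rangle=E$ by hypothesis, $Y/\langle h_Y\rangle\cong\mathbb P^1_x$ since $h_Y$ is hyperelliptic, and commutativity of the square $\pi_E\circ\phi=\Phi\circ\pi_Y$ shows that $\pi_E\circ\phi$ is $\langle r,h_Y\rangle$-invariant of degree $4$, hence identifies $Y/\langle r,h_Y\rangle$ with $\mathbb P^1_u$. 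The remaining quotient $Y/\langle rh_Y\rangle$ is the complementary curve, and the Galois diagram follows from \cite[Remark A.30]{DGKM}, exactly as in Theorem \ref{Proposition:KT}.

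Next I would observe that $\Theta=\{e\}+2(C_2\times C_2)-\langle r\rangle-\langle rh_Y\rangle-\langle h_Y\rangle$ is the standard Brauer relation for $C_2\times C_2$ (cf. Example \ref{extra_involution_example}), hence a pseudo Brauer relation for $C_2\times C_2$ and $Y$ by Remark \ref{brauer_verfiaibility}. Because $Y/\langle h_Y\rangle$ and $Y/(C_2\times C_2)$ are rational, their Jacobians vanish, so the isogeny \eqref{isogeny_from_relation} attached to $\Theta$ reduces to an isogeny between $\textup{Jac}_Y$ and $E\times\textup{Jac}_{Y/\langle rh_Y\rangle}$ (in either direction, isogeny being a symmetric relation); with the subgroups $\{e\},\langle r\rangle,\langle rh_Y\rangle,\langle h_Y\rangle$ this verifies the stated isogeny in the sense of Definition \ref{brauer_verifiability_definition}, and in particular forces $Y/\langle rh_Y\rangle$ to have genus $1$.

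For the local formula I would set $\mathcal X_2=\mathcal X_2(\textup{Jac}_Y)$ and let $\epsilon_a,\epsilon_b$ denote the nontrivial characters of $C_2\times C_2$ trivial on $\langle r\rangle$ and $\langle rh_Y\rangle$ respectively. Arguing as in Lemma \ref{rep_strucutre} via Frobenius reciprocity and \cite[Remark A.29]{DGKM}, the multiplicity of $\mathbf 1\oplus\epsilon_c$ in $\mathcal X_2$ equals $\textup{rk}_2\textup{Jac}_{Y/\langle h_Y\rangle}=0$, so $\mathcal X_2\cong\epsilon_a^{\,\textup{rk}_2 E}\oplus\epsilon_b^{\,\textup{rk}_2\textup{Jac}_{Y/\langle rh_Y\rangle}}$ and consequently $S_{\Theta,2}\subseteq\{\epsilon_a,\epsilon_b\}$. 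The regulator constants here are the same ones computed in the proof of Theorem \ref{local_expression_KT} (identical group and Brauer relation), giving $\mathcal C_\Theta(\epsilon_a)\equiv\mathcal C_\Theta(\epsilon_b)\equiv 2\bmod\mathbb{Q}^{\times 2}$; hence $\sum_{\tau\in S_{\Theta,2}}\langle\tau,\mathcal X_2\rangle=\textup{rk}_2 E+\textup{rk}_2\textup{Jac}_{Y/\langle rh_Y\rangle}=\textup{rk}_2\textup{Jac}_Y$. Since every representation of $C_2\times C_2$ is realisable over $\mathbb Q$, the module $\Omega^1(\textup{Jac}_Y)$ is self-dual, so Theorem \ref{regulator_constant_rank_parity} applies with $p=2$ and delivers the asserted congruence. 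The only step that is not routine bookkeeping is the identification of the four quotients — in particular that $\langle r,h_Y\rangle\cong C_2\times C_2$ and that $Y/\langle r,h_Y\rangle$ is rational and compatible with the commutative square; everything after that parallels Theorems \ref{pseudo_brauer_KT} and \ref{local_expression_KT}, and the needed regulator-constant computation has already been performed in the proof of the latter.
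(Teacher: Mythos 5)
Your proposal is correct and follows essentially the same route as the paper: commutativity of $r$ and $h_Y$ via the (central/unique) hyperelliptic involution, identification of the genus-$0$ quotients $Y/\langle h_Y\rangle$ and $Y/\langle r,h_Y\rangle$ so that the standard $C_2\times C_2$ Brauer relation verifies the isogeny, and then the local formula exactly as in Theorem \ref{local_expression_KT} with $S_{\Theta,2}\subseteq\{\epsilon_a,\epsilon_b\}$ and $\mathcal C_\Theta(\epsilon_a)\equiv\mathcal C_\Theta(\epsilon_b)\equiv 2$. You merely spell out details the paper leaves implicit, so there is nothing substantive to add.
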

\begin{proof}
Since $h_{Y}$ lies in the centre of $\textup{Aut}_{K}(Y)$ we deduce that $r$ and $h_{Y}$ commute. Therefore, $\langle r , h_{Y} \rangle \cong C_2 \times C_2$, and in addition $E=Y / \langle r \rangle$.  Since $\{e\}+2(C_2\times C_2) - \langle r \rangle - \langle rh_{Y} \rangle - \langle h_Y \rangle $ is a Brauer relation for the group $C_{2} \times C_{2}$, and $Y/\langle r, h_{Y} \rangle$ and $Y/\langle h_{Y} \rangle$ are both curves of genus $0$, then the result concerning pseudo Brauer verifiability follows. The local formula follows in exactly the same way as Theorem \ref{local_expression_KT}.
\end{proof}
In view of Proposition \ref{elliptic_involution}, we deduce that a canonical choice for the complimentary elliptic curve $E'$ is $Y/\langle rh_Y\rangle$ in the case when $p=2$. It is worthy of note that Theorem \ref{complementary_elliptic_curve} below provides a generalisation of Proposition \ref{elliptic_involution}.

\subsection{Elliptic subcovers of odd prime degree}
In the following subsection, we acquire a Galois cover of curves $X_Y \to \mathbb{P}_{u}^{1}$ in the presence of an elliptic subcover of odd prime degree. We only give the construction in the case where $\phi$ is generic (following Kuhn's terminology \cite{Kuhn}).
\begin{definition} \label{def:generic} We say that an elliptic subcover $\phi: Y \to E$ is \textit{generic} if it is unramified over $E[2]$, i.e. the Weierstrass points of $E$. 
\end{definition}

\begin{proposition} \label{Galois_closure} Let $Y/K$ be a genus $2$ curve with a $K$-rational elliptic subcover $\phi:Y \to E$ of odd prime degree $p$. In addition, suppose $\phi$ is generic. Then, the Galois closure $X_{Y}\to \mathbb{P}_{u}^{1}$ of $Y \xrightarrow{\phi} E \xrightarrow{\pi_E} \mathbb{P}_{u}^{1}$ is geometrically connected and has Galois group $S_{p} \times C_{2}$.  In addition, these fit into a Galois diagram

 \begin{figure}[h!]
\begin{center}\begin{tikzpicture}
    \node (Q1) at (2,-2) {$\mathbb{P}_{u}^{1}=X_Y/(S_{p}\times C_2)$};
    \node (Q2) at (-2.0,1.5) {$Y=X_Y/(S_{p-1} \times \{e\})$};
    \node (Q3) at (+2.0,-0.5) {$E=X_Y/(S_p \times \{e\})$};
    \node (Q4) at (0.5,3) {$X_Y$};
    \node (Q5) at (-2.0,-0.0) {$\mathbb{P}_{x}^{1}=X_Y/(S_{p-1}\times C_2)$};
    \draw (Q4)--(Q2);
    \draw (Q4)--(Q3);
    \draw (Q3)--node[right=1.0mm] {$\hspace{0mm}\pi_E$} (Q1);
    \draw (Q2)--node[left=0.0mm] {$\hspace{-3mm}\pi_Y$}(Q5);
    \draw (Q1)--node[below=1.0mm] {$\hspace{-5mm}\Phi$} (Q5);
    \draw (Q3)--node[above=0.5mm] {$\hspace{0mm}\phi$}(Q2);
    \end{tikzpicture} \end{center}\end{figure}
\end{proposition}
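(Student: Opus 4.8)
The plan is to realise $X_Y$ as a compositum of two simpler covers of $\mathbb{P}^1_u$: the quadratic cover $\pi_E\colon E\to\mathbb{P}^1_u$ and the Galois closure $W\to\mathbb{P}^1_u$ of the degree $p$ cover $\Phi\colon\mathbb{P}^1_x\to\mathbb{P}^1_u$, controlling each factor via Riemann--Hurwitz and Miranda's monodromy theorem (Theorem \ref{theorem_miranda}). First I would note that, by the commutative square, the degree $2p$ cover $Y\to\mathbb{P}^1_u$ factors both as $Y\xrightarrow{\phi}E\xrightarrow{\pi_E}\mathbb{P}^1_u$ and as $Y\xrightarrow{\pi_Y}\mathbb{P}^1_x\xrightarrow{\Phi}\mathbb{P}^1_u$, and that $\phi$ must intertwine the hyperelliptic involutions $h_Y$ and $h_E$ (composing $\phi h_Y$ with the degree $2$ map $\pi_E$ returns $\pi_E\phi$, and $\phi h_Y=\phi$ is impossible since $Y/\langle h_Y\rangle=\mathbb{P}^1$ cannot dominate $E$). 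Fixing Weierstrass models $E\colon v^2=g(u)$ and $Y\colon y^2=h(x)$, this forces $\phi(x,y)=(\Phi(x),R(x)y)$ with $R(x)^2h(x)=g(\Phi(x))$, hence $h(x)\equiv g(\Phi(x))\pmod{K(x)^{\times2}}$ and $y=\sqrt{h(x)}\in K(x)(\sqrt{g(u)})\subseteq K(W)(\sqrt{g(u)})=K(W\cdot E)$. So $K(Y)\subseteq K(W\cdot E)$, and as $W$ and $E$ are Galois over $\mathbb{P}^1_u$ and lie in $X_Y$, we get $X_Y=W\cdot E$ and an injection $\textup{Gal}(X_Y/\mathbb{P}^1_u)\hookrightarrow M\times C_2$, with $M=\textup{Gal}(W/\mathbb{P}^1_u)$ the monodromy group of $\Phi$ and $C_2=\textup{Gal}(E/\mathbb{P}^1_u)$.

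The crux is to show $M=S_p$. Riemann--Hurwitz gives $\deg R_\phi=2g(Y)-2=2$; genericity makes $\phi$ unramified over $E[2]$; and the $h_E$-equivariance of $\phi$ (with $h_E$ fixed-point-free on $E\setminus E[2]$) forces the ramification of $\phi$ to consist of exactly one simple branch point over each of a pair $\{e_1,h_E(e_1)\}\subseteq E\setminus E[2]$, both lying over a single point $c_1\in\mathbb{P}^1_u$ with $c_1\notin\pi_E(E[2])$. Tracing this through the square, the fibre $\Phi^{-1}(c_1)$ should consist of one point of ramification index $2$ together with $p-2$ unramified points, so that by Theorem \ref{theorem_miranda} the local monodromy of $\Phi$ at $c_1$ is a transposition. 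Since $\mathbb{P}^1_x$ is connected, $M$ is transitive of prime degree $p$, hence primitive, and a primitive subgroup of $S_p$ containing a transposition equals $S_p$ (Jordan); thus $M=S_p$. The main difficulty lies exactly in this ramification bookkeeping — pinning down the single ``extra'' branch point $c_1$ and checking that its monodromy is an honest transposition, thereby excluding $M=A_p$ or $M$ solvable.

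It remains to promote the injection to an isomorphism and to read off the diagram. For the former it suffices that $W$ and $E$ be linearly disjoint over $\mathbb{P}^1_u$, i.e.\ $\sqrt g\notin K(W)$: since $M=S_p$, the only quadratic subextension of $W/\mathbb{P}^1_u$ is the one cut out by the sign character, which is ramified at $c_1$ (whose local monodromy is odd), whereas $\mathbb{P}^1_u(\sqrt g)$ is ramified only over $\pi_E(E[2])$, a set not containing $c_1$; so the two differ, $\sqrt g\notin K(W)$, and $\textup{Gal}(X_Y/\mathbb{P}^1_u)=S_p\times C_2$. The stated Galois diagram is then the subgroup dictionary: from the descriptions of $K(E)$, $K(x)$, $K(Y)$ inside $K(W\cdot E)$ one checks $E=X_Y/(S_p\times\{e\})$, $\mathbb{P}^1_x=X_Y/(S_{p-1}\times C_2)$ and $Y=X_Y/(S_{p-1}\times\{e\})$, with $\phi,\pi_Y,\pi_E,\Phi$ the evident inclusions. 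Finally, none of the steps above is affected by base change to $\overline K$, so the geometric monodromy of $Y/\mathbb{P}^1_u$ is again $S_p\times C_2$; as it coincides with the arithmetic monodromy, $K$ is algebraically closed in $K(X_Y)$ and $X_Y$ is geometrically connected.
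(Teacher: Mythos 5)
Your proposal is correct, and it follows the same overall skeleton as the paper (Galois closure realised through the $S_p$-closure of $\Phi$ together with the quadratic cover $\pi_E$, with the group pinned down by a transposition in the monodromy of $\Phi$), but several key steps are executed by genuinely different means. Where the paper imports Kuhn's explicit ramification structure of $\Phi$ at all five branch points, you derive the only fact actually needed here — a single extra branch point $c_1\notin u(E[2])$ at which the local monodromy is a transposition — directly from Riemann--Hurwitz ($\deg R_\phi=2$), genericity, and the $h_E$-equivariance of $\phi$ (your hedged ``should consist'' does check out: the two simple ramification points of $\phi$ are swapped by $h_Y$, hence fuse to a single simply ramified point of $\Phi$ over $c_1$, all other points being unramified). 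Likewise, to separate $E$ from the quadratic subcover of the $S_p$-closure, the paper compares full branch loci and splits into cases $p\equiv 1,3 \pmod 4$; you only compare ramification at the one point $c_1$ (sign cover ramified there, $E$ not), which is cleaner and uniform in $p$, and it also removes the paper's separate bookkeeping at $p=3$. Finally, the paper proves geometric connectedness by a contradiction involving quadratic twists $E_m$, whereas you argue that every step persists over $\overline K$, so arithmetic and geometric monodromy agree and the constant field is $K$; both are valid, and your explicit compositum description $K(X_Y)=K(W)\cdot K(E)$ (via $\phi(x,y)=(\Phi(x),R(x)y)$, so $K(Y)=K(x)(\sqrt{g(u)})$) makes the subgroup dictionary for the diagram transparent. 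One small caveat: your intertwining claim $\phi h_Y=h_E\phi$ implicitly uses the commutative square of \S\ref{subsec_EC} (equivalently the normalisation of $\pi_E$ from Kuhn), and the dichotomy $\phi h_Y\in\{\phi,h_E\phi\}$ deserves the one-line connectedness remark; with that spelled out the argument is complete.
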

\begin{proof}
We write $\{d_1,d_2,d_3,d_4\}$ to denote the branch locus of $E \to \mathbb{P}^{1}_{u}$, i.e. $u(E[2])$. By \cite[\S 1]{Kuhn}, the branch locus of $\Phi: \mathbb{P}_{x}^{1} \to \mathbb{P}_{u}^{1}$  consists of $\{d_1,d_2,d_3,d_4,d_5\}$ when $p \geq 5 $ and $\{d_1,d_2,d_3,d_5\}$ where $d_4 \neq d_5$ when $p=3$. Write $X_{\mathbb{P}^{1}} \to \mathbb{P}^{1}_{u}$ to denote the Galois closure of $\mathbb{P}^{1}_{x} \to \mathbb{P}^{1}_{u}$. By \cite[\S 1]{Kuhn}, the ramification structure of $\Phi$ (see Definition \ref{Def:ramification_structure}) in the generic case is $\{ (2)^{\frac{p-1}{2}},(2)^{\frac{p-1}{2}},(2)^{\frac{p-1}{2}},(2)^{\frac{p-3}{2}}, (2)^{1}\}$ for $ p \geq 3$. We now fix an arbitrary embedding $K \hookrightarrow \mathbb{C}$. Following the notation from Theorem \ref{theorem_miranda}, we write $\sigma_i$ to denote the  monodromy of $X_{\mathbb{P}^{1}} \to \mathbb{P}^{1}_{u}$ at $d_i$. It follows that $\sigma_5$ is a transposition, and therefore the Galois group of $X_{\mathbb{P}^{1}} \to \mathbb{P}^{1}_{u}$ viewed as a morphism of complex curves is the whole of $S_p$. Therefore, the corresponding $K$-rational morphism $X_{\mathbb{P}^{1}} \to \mathbb{P}^{1}_{u}$ must have Galois group $S_p$.

Showing that the Galois group of $X_{Y} \to \mathbb{P}^{1}$ is $S_{p} \times C_{2}$ (as opposed to $S_{p})$ will follow upon showing that the discriminant curve $W = {X}_{\mathbb{P}^{1}}/A_p$ (see \cite[Lemma A.7]{DGKM}) isn't isomorphic to $E$ over $\bar{K}$. It suffices to show that the branch points of $W \to \mathbb{P}^{1}_{u}$ do not coincide with $u(E[2]).$ In particular, if $ p \equiv 1 \mod 4$, then $\sigma_{1}, \sigma_{2}, \sigma_{3}$ are  even permutations, while $\sigma_{4}$ and $\sigma_{5}$ are odd permutations. Therefore, the branch locus of $D \to \mathbb{P}_{u}^{1}$ is $\{d_4,d_5\}$. Similarly, if $p\equiv 3 \mod 4$, then the corresponding branch locus $W \to \mathbb{P}^{1}$ is $\{d_1,d_2,d_3,d_5\}$. Therefore, $W \not\cong  E$ as required. 

We show that $X_Y$ is geometrically connected.  From above, $X_{\mathbb{P}^{1}}$ must be geometrically connected, while the function field $K(X_Y)$ contains $K(X_{\mathbb{P}^{1}})$ as a degree $2$ subfield. Now aiming for a contradiction, suppose that $X_Y$ is not geometrically connected. Under this assumption, the geometric components of $X_Y$ must be defined over a quadratic field extension of $K$, say $K(\sqrt{m})$. Therefore, the $3$ quadratic extensions of $K(u)$ inside $K(X_Y)$ must be $K(E), K(u,\sqrt{m})$ and $K(E_m)$, where $E_m$ denotes the quadratic twist of $E$ by $m$. This is impossible, as we know two of these quadratic extensions, namely $K(E)$ and $K(W)$, and we also know that $K(W)$ is not isomorphic to $E$ or $E_m$ over $\overline{K}$. From this, we deduce that $X_Y$ is geometrically connected.

The existence of the Galois diagram asserted by the claim above is immediate by construction.  This completes the proof.   \qedhere

\end{proof}

In the remainder of this subsection, we find the $5$-tuple corresponding to the monodromy of $X_Y \to \mathbb{P}^{1}_{u}$ at its branch points. We use this to find a description of $V_{\ell}(\textup{Jac}_{X_Y})$ as a $G$-module. We first fix some notation.

\begin{notation} \label{notation_for_SpxC2}  We let $\pi_1:S_p \times C_2 \to S_p$ and $\pi_2 : S_p \times C_2 \to C_2$ be the natural projections to the first and second factors. For the remainder of this section, we write $\tau_k$ to be \textit{any} element (well-defined up to conjugacy) in $S_p \times C_2$ satisfying
\begin{enumerate}
\item  $\pi_1(\tau_k)$ has cycle type $(2)^k$ as an element in $S_p$,
\item and $\pi_2(\tau_k)$ is trivial.
\end{enumerate}
Similarly, we define $\tilde{\tau}_{k}$ analogously, but instead we impose that $\pi_2(\tilde{\tau}_{k})$ is surjective instead.
\end{notation}

\begin{proposition} \label{Tate_module_decomposable} 
Let $X_{Y} \to \mathbb{P}^{1}$ be the Galois cover of curves afforded by Proposition \ref{Galois_closure}. Then, $X_{Y} \to \mathbb{P}^{1}_{u}$ is branched at $5$ points, and in addition the monodromy tuple of $X_{Y} \to \mathbb{P}^{1}_{u}$ is $S_p \times C_2$-conjugate to  $(\tilde{\tau}_{\frac{p-1}{2}}, \tilde{\tau}_{\frac{p-1}{2}} , \tilde{\tau}_{\frac{p-1}{2}}, \tilde{\tau}_{\frac{p-3}{2}}, \tau_1)$.

\end{proposition}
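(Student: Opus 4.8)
The plan is to combine the ramification data from Kuhn's work (already invoked in Proposition \ref{Galois_closure}) with the monodromy machinery of Theorem \ref{theorem_miranda} and the structure of the group $S_p \times C_2$. Recall from the proof of Proposition \ref{Galois_closure} that after fixing an embedding $K \hookrightarrow \mathbb{C}$, the branch locus of $\Phi:\mathbb{P}^1_x \to \mathbb{P}^1_u$ is $\{d_1,d_2,d_3,d_4,d_5\}$ (for $p \geq 5$; with $d_4$ replaced by a coincidence when $p=3$), that these are exactly the branch points of $X_Y \to \mathbb{P}^1_u$ by \cite[Corollary 3.9.3(b)]{algebraic_function_fields}, and that the ramification structure of $\Phi$ in the generic case is $\{(2)^{\frac{p-1}{2}},(2)^{\frac{p-1}{2}},(2)^{\frac{p-1}{2}},(2)^{\frac{p-3}{2}},(2)^1\}$. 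So the first step is simply to record that $X_Y \to \mathbb{P}^1_u$ is branched at exactly $5$ points.

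Next I would pin down $\pi_1(\sigma_i)$ and $\pi_2(\sigma_i)$ for the monodromy tuple $(\sigma_1,\dots,\sigma_5)$ of $X_Y \to \mathbb{P}^1_u$. For the $S_p$-component: the cover $X_{\mathbb{P}^1} \to \mathbb{P}^1_u$ is the Galois closure of $\mathbb{P}^1_x \to \mathbb{P}^1_u$ with group $S_p$, and its monodromy at $d_i$ has the same cycle type as the ramification structure of $\Phi$ at $d_i$ by Theorem \ref{theorem_miranda} and Remark \ref{remark:decomposition_groups}; thus $\pi_1(\sigma_i)$ has cycle type $(2)^{\frac{p-1}{2}}$ for $i=1,2,3$, cycle type $(2)^{\frac{p-3}{2}}$ for $i=4$, and is a transposition for $i=5$. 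For the $C_2$-component: $\pi_2(\sigma_i)$ records the ramification of the discriminant cover $W = X_{\mathbb{P}^1}/A_p \to \mathbb{P}^1_u$, equivalently whether $\pi_1(\sigma_i)$ is an odd permutation. Since a product of $k$ disjoint transpositions has sign $(-1)^k$, the element $\pi_1(\sigma_i)$ is odd exactly when the relevant exponent is odd. Here one must split into the two congruence classes of $p$ mod $4$ exactly as in the proof of Proposition \ref{Galois_closure}: the exponents $\frac{p-1}{2}$ and $\frac{p-3}{2}$ have opposite parities, and which of them is odd depends on $p \bmod 4$; in either case the upshot (used there to compute the branch locus of $W$) is that $\pi_2(\sigma_i)$ is nontrivial for $i=1,2,3,4$ and trivial for $i=5$.

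Putting the two components together with Notation \ref{notation_for_SpxC2}, $\sigma_i$ is conjugate to $\tilde{\tau}_{\frac{p-1}{2}}$ for $i=1,2,3$ (cycle type $(2)^{\frac{p-1}{2}}$ in $S_p$, nontrivial in $C_2$), to $\tilde{\tau}_{\frac{p-3}{2}}$ for $i=4$, and to $\tau_1$ for $i=5$ (a transposition, trivial in $C_2$). This already gives the claimed tuple $(\tilde{\tau}_{\frac{p-1}{2}}, \tilde{\tau}_{\frac{p-1}{2}}, \tilde{\tau}_{\frac{p-1}{2}}, \tilde{\tau}_{\frac{p-3}{2}}, \tau_1)$ up to simultaneous conjugacy, since an element of $S_p \times C_2$ is determined up to conjugacy by the cycle type of its $S_p$-part together with its $C_2$-part. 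One small verification I would not skip: that the proposed tuple is a genuine monodromy tuple, i.e. the $\sigma_i$ generate all of $S_p \times C_2$ and $\prod_i \sigma_i = e$; generation follows because $\pi_1(\sigma_5)$ is a transposition and $\pi_1(\sigma_1)$ is a product of $\frac{p-1}{2}$ transpositions so together they generate $S_p$ (this is exactly the argument already given in Proposition \ref{Galois_closure}), while some $\sigma_i$ with nontrivial $C_2$-part together with $S_p$ gives all of $S_p \times C_2$; the product-one relation is forced by the fact that $(\sigma_1,\dots,\sigma_5)$ genuinely arises as the monodromy of the cover $X_Y \to \mathbb{P}^1_u$.

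I expect the main obstacle to be the bookkeeping of the sign/parity computation across the cases $p \equiv 1, 3 \pmod 4$ (and the degenerate behaviour at $p=3$, where the fourth branch point has ramification structure $(2)^{0}$, i.e. $\Phi$ is unramified there, so $d_4$ is absent and one works with $\{d_1,d_2,d_3,d_5\}$): one must check that in every case the resulting conjugacy classes are precisely $\tilde\tau_{\frac{p-1}{2}}$ and $\tilde\tau_{\frac{p-3}{2}}$ as defined, and in particular that the labelling in Notation \ref{notation_for_SpxC2} matches regardless of which of $\frac{p-1}{2},\frac{p-3}{2}$ is the odd one. Everything else is a direct transcription of Kuhn's ramification data through Theorem \ref{theorem_miranda} and Remark \ref{remark:decomposition_groups}, which have already been set up for exactly this purpose.
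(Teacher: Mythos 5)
Your determination of the $S_p$-components of the monodromy (via Kuhn's ramification data, Theorem \ref{theorem_miranda} and Remark \ref{remark:decomposition_groups}) matches the paper, but the step computing the $C_2$-components is wrong. You assert that $\pi_2(\sigma_i)$ ``records the ramification of the discriminant cover $W=X_{\mathbb{P}^1}/A_p\to\mathbb{P}^1_u$, equivalently whether $\pi_1(\sigma_i)$ is an odd permutation''. This conflates two distinct quadratic subcovers: the character $\pi_2$ of $S_p\times C_2$ cuts out $K(E)$, whereas $\textup{sgn}\circ\pi_1$ cuts out $K(W)$, and the proof of Proposition \ref{Galois_closure} shows precisely that these differ ($W\not\cong E$ is exactly what forces the Galois group to be $S_p\times C_2$ rather than $S_p$). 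If one follows your recipe, the $C_2$-part of $\sigma_i$ would be nontrivial exactly at the branch points of $W\to\mathbb{P}^1_u$, i.e.\ at $\{d_4,d_5\}$ when $p\equiv 1\pmod 4$ and at $\{d_1,d_2,d_3,d_5\}$ when $p\equiv 3\pmod 4$; in particular $\pi_2(\sigma_5)$ would come out nontrivial (a transposition is odd), contradicting both the ``upshot'' you state and the entry $\tau_1$ you are trying to obtain. The correct argument, which is the paper's, uses the restriction isomorphism $\textup{Gal}(K(X_Y)/K(u))\xrightarrow{\sim}\textup{Gal}(K(X_{\mathbb{P}^1})/K(u))\times\textup{Gal}(K(E)/K(u))$: under it the $C_2$-component of the monodromy at $d_i$ generates the decomposition group of $d_i$ in $K(E)/K(u)$, hence is nontrivial precisely when $\pi_E$ is ramified over $d_i$, i.e.\ for $i=1,2,3,4$ (the points of $u(E[2])$), and trivial at $d_5$. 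No case split on $p\bmod 4$ is then needed.

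There is a secondary slip in the branch-locus step: the branch locus of the Galois closure $X_Y\to\mathbb{P}^1_u$ coincides (by the Stichtenoth reference) with that of the composite $Y\to E\to\mathbb{P}^1_u$, not with that of $\Phi$. For $p\geq 5$ the two sets agree, but for $p=3$ the cover $\Phi$ is unramified over $d_4$ while $Y\to\mathbb{P}^1_u$, and hence $X_Y\to\mathbb{P}^1_u$, is still branched there (the monodromy being $\tilde\tau_0$, trivial in $S_p$ and nontrivial in $C_2$). Your proposal to drop $d_4$ and work with four branch points when $p=3$ therefore contradicts the statement being proved, which asserts five branch points and a five-term tuple for every odd $p$.
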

\begin{proof}   
In view of \cite{Kuhn}, we deduce that the branch locus of $Y \xrightarrow{\phi} E \xrightarrow{\pi_E} \mathbb{P}_{u}^{1}$ consists of $5$ points. By Proposition \ref{Galois_closure}, $X_Y \to \mathbb{P}^{1}_{u}$ is the Galois closure of  $Y \xrightarrow{\phi} E \xrightarrow{\pi_E} \mathbb{P}_{u}^{1}$, and by using \cite[Corollary 3.9.3(b)]{algebraic_function_fields}, we deduce that the branch locus of $X_Y \to \mathbb{P}^{1}_{u}$ coincides with the branch locus of $Y \to \mathbb{P}^{1}_{u}$. This proves the first half of the claim.

By Remark \ref{remark:decomposition_groups}, the elements in the monodromy tuple coincide with the decomposition groups of any point $y \in X_Y$ lying above a branch point, which in turn coincides with the decomposition group of the corresponding prime ideal $\mathfrak{p}_{y}$ of $K(X_Y)$  over $K(u)$. Given any branch point $d_{i}$, we write $y_{i}$ to denote any point the fibre of $d_{i}$ under $X_{Y} \to \mathbb{P}^{1}_{u}$, and we set $\mathfrak{p}_{i}= \mathfrak{p}_{y_i} \cap K(X_{\mathbb{P}^{1}})$ and $\mathfrak{q}_{i} = \mathfrak{p}_{y_i} \cap K(E)$. In view of Proposition \ref{Galois_closure}, we deduce that the natural restriction maps induce an isomorphism
\begin{equation*} \label{eq:decomposition_groups_product}
\begin{split}
\textup{Gal}(K(X_Y)/K(u)) &\xrightarrow{\sim} \textup{Gal}(K(X_{\mathbb{P}^{1}})/K(u)) \times \textup{Gal}(K(E)/K(u))= S_p \times C_2  \\
\sigma &\mapsto (\sigma \mid_{K(X_{\mathbb{P}^{1}})}, \sigma|_{K(E)}).
\end{split}
\end{equation*}
In addition, the decomposition group $G_{\mathfrak{p}_{y_i}}$ gets identified with $G_{\mathfrak{p}_{i}} \times G_{\mathfrak{q}_{i}}$ under this isomorphism.  By \cite[\S 1]{Kuhn}, the ramification structure of $\Phi: \mathbb{P}^{1}_{x} \to \mathbb{P}^{1}_{u}$ (see Definition \ref{Def:ramification_structure}) in the generic case is $\{ (2)^{\frac{p-1}{2}},(2)^{\frac{p-1}{2}},(2)^{\frac{p-1}{2}},(2)^{\frac{p-3}{2}}, (2)^{1}\}$, and therefore the cycle type of $\{G_{\mathfrak{p}_{i}}\}_{i=1}^{5}$ matches this in view of Theorem \ref{theorem_miranda}. In addition, the first four of these points correspond to the four branch points of $E \to \mathbb{P}^{1}_{u}$, and therefore $G_{\mathfrak{q}_{i}}$ is all of $C_2$ for $i=1,2,3,4$, while it's trivial for $i=5$. This gives the required result. \qedhere

\end{proof}
Following the notation introduced in Theorem \ref{equivariant_riemann_hurwitz}, given $\sigma \in S_p \times C_2$, we write $\mathbf{1}_{\sigma}^{*}$ to denote the permutation representation $\textup{Ind}_{\langle \sigma \rangle} ^{S_p \times C_2} \mathbf{1}$.
\begin{theorem} \label{Tate_module_X_Y}Let $X_{Y} \to \mathbb{P}^{1}$ be the Galois cover of curves afforded by Proposition \ref{Galois_closure}. For any prime number $\ell$, the following $G$-representations
\[ V_{\ell}(\textup{Jac}_{X_{Y}}) \ \ \textup{and} \ \ \mathbf{1}^{\oplus
2} \oplus (\mathbf{1}_{e}^{*})^{\oplus 3} \ominus \mathbf{1}_{\tau_1}^{*}  \ominus
\mathbf{1}_{\tilde{\tau}_{\frac{p-3}{2}}}^{*} \ominus(\mathbf{1}_{\tilde{\tau}_{\frac{p-1}{2}}}^{*})^{\oplus 3} \]
become isomorphic after extending scalars to $\mathbb{C}$.
\end{theorem}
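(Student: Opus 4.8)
The plan is to read off the claimed $G$-decomposition directly from the equivariant Riemann--Hurwitz formula, in the guise of Corollary~\ref{eq_riemann_hurwitz_gc}, with the geometric input supplied by Propositions~\ref{Galois_closure} and~\ref{Tate_module_decomposable}. First I would apply Corollary~\ref{eq_riemann_hurwitz_gc} to the $K$-rational morphism $f = \pi_E\circ\phi : Y\to\mathbb{P}^{1}_{u}$, whose Galois closure is $F : X_Y\to\mathbb{P}^{1}_{u}$: by Proposition~\ref{Galois_closure} the curve $X_Y$ is geometrically connected with Galois group $G=S_p\times C_2$, so the corollary applies, and by Proposition~\ref{Tate_module_decomposable} the cover $F$ is branched at exactly $5$ points with monodromy tuple $G$-conjugate to $(\tilde{\tau}_{\frac{p-1}{2}},\tilde{\tau}_{\frac{p-1}{2}},\tilde{\tau}_{\frac{p-1}{2}},\tilde{\tau}_{\frac{p-3}{2}},\tau_1)$. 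Feeding this into Corollary~\ref{eq_riemann_hurwitz_gc} yields, after extending scalars to $\mathbb{C}$,
\[ V_{\ell}(\textup{Jac}_{X_Y}) \;\cong\; \mathbf{1}^{\oplus 2}\ominus(\mathbf{1}_e^{*})^{\oplus 2}\oplus\bigoplus_{i=1}^{5}\bigl(\mathbf{1}_e^{*}\ominus\mathbf{1}_{\sigma_i}^{*}\bigr), \]
where $(\sigma_1,\dots,\sigma_5)$ is the monodromy tuple just described and $\mathbf{1}_\sigma^{*}=\textup{Ind}_{\langle\sigma\rangle}^{G}\mathbf{1}$.

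The rest is a formal manipulation in the representation ring of $G$. The five $\mathbf{1}_e^{*}$-summands combine with the term $\ominus(\mathbf{1}_e^{*})^{\oplus 2}$ (which encodes $2g(\mathbb{P}^{1}_{u})-2=-2$) to give $(\mathbf{1}_e^{*})^{\oplus 3}$, while the negative contributions from the branch points collect as $\ominus(\mathbf{1}_{\tilde{\tau}_{(p-1)/2}}^{*})^{\oplus 3}\ominus\mathbf{1}_{\tilde{\tau}_{(p-3)/2}}^{*}\ominus\mathbf{1}_{\tau_1}^{*}$, which is exactly the asserted expression. Here I would record that $\mathbf{1}_\sigma^{*}$ depends only on the $G$-conjugacy class of $\langle\sigma\rangle$, so that the symbols $\mathbf{1}_{\tilde{\tau}_k}^{*}$ and $\mathbf{1}_{\tau_1}^{*}$ are unambiguous: each $\tilde{\tau}_k$ and $\tau_1$ has order $2$, and any two elements of $G$ satisfying the defining cycle-type/projection conditions of Notation~\ref{notation_for_SpxC2} are conjugate in $S_p\times C_2$. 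Via Remark~\ref{remark:decomposition_groups} this identifies $\langle\sigma_i\rangle$ with the relevant $\langle\tilde{\tau}_k\rangle$ (resp. $\langle\tau_1\rangle$), so $\mathbf{1}_{\sigma_i}^{*}$ is the corresponding $\mathbf{1}_{\tilde{\tau}_k}^{*}$.

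I do not anticipate a genuine obstacle at this stage: the substantive work is already done in Propositions~\ref{Galois_closure} and~\ref{Tate_module_decomposable}, namely establishing that $X_Y$ is geometrically connected, that its Galois group is $S_p\times C_2$ rather than $S_p$, and that the monodromy at each branch point is $\tilde{\tau}_{(p-1)/2}$, $\tilde{\tau}_{(p-3)/2}$ or $\tau_1$ as appropriate. What remains is bookkeeping: noting that Corollary~\ref{eq_riemann_hurwitz_gc} (rather than Theorem~\ref{equivariant_riemann_hurwitz} directly) is the right tool because the base curve has genus $0$, and tallying the multiplicities and signs in the virtual representation. The only point meriting a word of care is uniformity across odd primes: for $p=3$ one has $\tilde{\tau}_{(p-3)/2}=\tilde{\tau}_0$, the nontrivial element of the $C_2$-factor, and the ramification structure of $\Phi$ in Proposition~\ref{Galois_closure} omits one of the points $d_i$; but since Proposition~\ref{Tate_module_decomposable} already records the monodromy tuple uniformly for all odd $p$, the computation above goes through verbatim with no case split.
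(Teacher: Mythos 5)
Your proposal is correct and follows exactly the paper's own argument: the proof there simply combines Proposition \ref{Tate_module_decomposable} (the monodromy tuple) with Corollary \ref{eq_riemann_hurwitz_gc}, applicable because $X_Y$ is geometrically connected by Proposition \ref{Galois_closure}. Your additional bookkeeping (collecting the $\mathbf{1}_e^{*}$ terms and noting conjugacy-independence of $\mathbf{1}_\sigma^{*}$) is just a more explicit rendering of the same computation.
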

\begin{proof}
The claim follows by combining Proposition \ref{Tate_module_decomposable} with Corollary \ref{eq_riemann_hurwitz_gc} (which is applicable since $X_{Y}/K$ is geometrically connected  in view of Proposition \ref{Galois_closure}). \qedhere
\end{proof}

\subsection{Complimentary elliptic curve $E'$ and a local expression}
The main result in this subsection is Theorem \ref{complementary_elliptic_curve} below which gives a canonical choice for the complimentary elliptic curve $E' $ as the Jacobian of the quotient of $X_Y$ by a subgroup $H' \leq S_p \times C_2$. We then prove pseudo Brauer verifiability for the underlying isogeny $\textup{Jac}_{Y} \to E \times E'$, and we end this section by obtaining a local expression for the parity of the $p^{\infty}$-Selmer rank of $\textup{Jac}_{Y}$ using regulator constants.
\begin{notation} \label{def_of_H'}We view $S_{p} \times C_{2}$ as a subgroup of $S_{p+2}$ by letting it act on $\{1,\dots,p,p+1,p+2\}$ in the natural way, i.e. the $S_p$ factor acts on $\{1,\dots,p\}$ in the natural way, while $C_2$ permutes $p+1 \leftrightarrow p+2$. For the remainder of this section, we fix $ H '$ to be the subgroup of $S_{p} \times C_{2}$ generated by the permutations
$(1,2,\dots,p-2), (1,2), (p-1,p)(p+1,p+2)$ if $p \geq 5$, while we write $H'$ to denote the subgroup generated by $(p-1,p)(p+1,p+2)$ if $p \leq 3$. As an abstract group, $H'$ is isomorphic to $S_{p-2} \times C_2$. We note that in the case $p=2$, this coincides with the subgroup generated by $rh_{Y}$ from Proposition \ref{elliptic_involution}. \end{notation}

For the remainder of this section, we write $\epsilon$ to denote the $S_p\times C_2$-representation obtained by lifting the non-trivial representation from the $C_{2}$ quotient, while $\sigma$ denotes the representation obtained by lifting the standard representation $(p-1)$-dimensional representation from the $S_{p}$ quotient.

\begin{theorem} \label{complementary_elliptic_curve}
Let $X_{Y} \to \mathbb{P}^{1}$ be the Galois cover of curves afforded by Proposition \ref{Galois_closure}. Then, $\Theta=( S_{p-1}\times\{e\})-( S_{p}\times\{e\})-H'$ is a pseudo Brauer relation for $S_p \times C_2$ and $X_Y$ which verifies an isogeny $\textup{Jac}_{Y} \to E \times \textup{Jac}_{X_Y/H'}.$
\end{theorem}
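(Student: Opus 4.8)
The plan is to verify the two conditions of Definition \ref{brauer_verifiability_definition} for the triple $(S_p\times C_2,\ X_Y,\ \Theta)$ with $\Theta=(S_{p-1}\times\{e\})-(S_p\times\{e\})-H'$. First I would show that $\Theta$ is a pseudo Brauer relation for $S_p\times C_2$ and $X_Y$, and then I would identify the relevant quotient curves with $Y$, $E$ and $\textup{Jac}_{X_Y/H'}$, concluding via the isogeny \eqref{isogeny_from_relation} that $\Theta$ verifies an isogeny $\textup{Jac}_Y\to E\times\textup{Jac}_{X_Y/H'}$.

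For the first step, I would appeal to the explicit description of $V_\ell(\textup{Jac}_{X_Y})$ as a $\mathbb{C}[G]$-module in Theorem \ref{Tate_module_X_Y}. Concretely, I would compute the multiplicities $\langle V_\ell(\textup{Jac}_{X_Y}),\mathbf{1}_{S_{p-1}\times\{e\}}^{*}\rangle$, $\langle V_\ell(\textup{Jac}_{X_Y}),\mathbf{1}_{S_p\times\{e\}}^{*}\rangle$ and $\langle V_\ell(\textup{Jac}_{X_Y}),\mathbf{1}_{H'}^{*}\rangle$ using Frobenius reciprocity together with the formula for $\dim V_\ell^D=2\dim\textup{Jac}_{X_Y/D}$ (equivalently, by pairing the virtual character in Theorem \ref{Tate_module_X_Y} against $\textup{Ind}_D^G\mathbf 1$). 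By the genus computations that identify $X_Y/(S_{p-1}\times\{e\})=Y$ (genus $2$), $X_Y/(S_p\times\{e\})=E$ (genus $1$) and $X_Y/H'$ an elliptic curve (genus $1$), we get $\dim\textup{Jac}_Y=2=\dim E+\dim\textup{Jac}_{X_Y/H'}$. Writing $\Theta=\sum_i H_i-\sum_j H_j'$ with $\{H_i\}=\{S_{p-1}\times\{e\}\}$ and $\{H_j'\}=\{S_p\times\{e\},H'\}$, this gives $\langle V_\ell,\textup{Ind}_{S_{p-1}\times\{e\}}^G\mathbf 1\rangle=\langle V_\ell,\textup{Ind}_{S_p\times\{e\}}^G\mathbf 1\oplus\textup{Ind}_{H'}^G\mathbf 1\rangle$. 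I would then set $\rho_1=\bigl(\textup{Ind}_{S_p\times\{e\}}^G\mathbf 1\oplus\textup{Ind}_{H'}^G\mathbf 1\ominus\textup{Ind}_{S_{p-1}\times\{e\}}^G\mathbf 1\bigr)^+$ and $\rho_2$ the complementary summand (i.e. write the virtual representation $\textup{Ind}_{S_{p-1}\times\{e\}}^G\mathbf 1-\textup{Ind}_{S_p\times\{e\}}^G\mathbf 1-\textup{Ind}_{H'}^G\mathbf 1$ as $\rho_1-\rho_2$ with $\rho_1,\rho_2$ genuine representations having no common constituent and, by the multiplicity computation, satisfying $\langle\rho_1,V_\ell\rangle=\langle\rho_2,V_\ell\rangle=0$); this is exactly what Definition \ref{def:pseudo Brauer_relations} requires for $\mathcal V=V_\ell(\textup{Jac}_{X_Y})$, hence $\Theta$ is a pseudo Brauer relation for $S_p\times C_2$ and $X_Y$.

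For the second step — matching quotients — I would record that, by construction of the Galois diagram in Proposition \ref{Galois_closure}, $X_Y/(S_{p-1}\times\{e\})\cong Y$ and $X_Y/(S_p\times\{e\})\cong E$, so it remains only to observe that $X_Y/H'$ is (by the monodromy/Riemann--Hurwitz count applied to $H'\backslash G$, or directly via Theorem \ref{Tate_module_X_Y}) a curve of genus $1$, i.e. an elliptic curve, which we take as our canonical $E'$. With these identifications conditions (1)--(2) of Definition \ref{brauer_verifiability_definition} hold, and \eqref{isogeny_from_relation} furnishes the desired isogeny $\textup{Jac}_Y\to E\times\textup{Jac}_{X_Y/H'}$. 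The main obstacle I anticipate is the bookkeeping in the multiplicity computation: one must check carefully that the virtual character of Theorem \ref{Tate_module_X_Y} pairs to zero against $\textup{Ind}_{H'}^G\mathbf 1$ with $H'\cong S_{p-2}\times C_2$ in its specific embedding, which amounts to counting double cosets $H'\backslash (S_p\times C_2)/\langle g_i\rangle$ for each monodromy generator $g_i\in\{\tau_1,\tilde\tau_{(p-3)/2},\tilde\tau_{(p-1)/2}\}$ — a combinatorial exercise that is routine but must be done uniformly in $p$ (with the small cases $p\le 3$, where $H'=\langle(p-1,p)(p+1,p+2)\rangle$, checked separately, consistently with Proposition \ref{elliptic_involution} when $p=2$).
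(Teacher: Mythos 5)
Your overall skeleton (identify the quotients from Proposition \ref{Galois_closure}, use the $G$-module structure of $V_{\ell}(\textup{Jac}_{X_Y})$ to check the pseudo Brauer condition, then invoke \eqref{isogeny_from_relation}) is the same as the paper's, but the decisive step is asserted rather than proved, and the deduction you propose is not valid as stated. From the equality of total multiplicities $\langle V_{\ell},\textup{Ind}_{S_{p-1}\times\{e\}}^{G}\mathbf{1}\rangle=\langle V_{\ell},\textup{Ind}_{S_{p}\times\{e\}}^{G}\mathbf{1}\rangle+\langle V_{\ell},\textup{Ind}_{H'}^{G}\mathbf{1}\rangle$ (i.e. $4=2+2$) you can only conclude $\langle\rho_1,V_{\ell}\rangle=\langle\rho_2,V_{\ell}\rangle$, not that both vanish: nothing in a totals-only computation rules out a constituent of $V_{\ell}$ occurring in $\rho_1$ being compensated by a different constituent occurring in $\rho_2$. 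Definition \ref{def:pseudo Brauer_relations} requires the individual vanishing, so this is exactly the content that still has to be supplied. (Incidentally, your anticipated check that the virtual character of Theorem \ref{Tate_module_X_Y} ``pairs to zero against $\textup{Ind}_{H'}^{G}\mathbf{1}$'' is off: the pairing equals $2$, which is precisely the statement that $X_Y/H'$ has genus $1$; if it were $0$ the quotient would be rational and the theorem false.)

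The paper fills this gap by a constituent-level analysis. Writing $A_1=S_p\times\{e\}$, $A_2=S_{p-1}\times C_2$, $A_3=S_{p-1}\times\{e\}$, it decomposes $\textup{Ind}_{A_1}^{G}\mathbf{1}=\mathbf{1}\oplus\epsilon$, $\textup{Ind}_{A_2}^{G}\mathbf{1}=\mathbf{1}\oplus\sigma$, $\textup{Ind}_{A_3}^{G}\mathbf{1}=\mathbf{1}\oplus\epsilon\oplus\sigma\oplus(\sigma\otimes\epsilon)$, and uses the genera of all four known quotients (including $X_Y/A_2=\mathbb{P}^1_x$, which you never use but which is needed to isolate $\sigma$) to get $\langle V_{\ell},\mathbf{1}\rangle=\langle V_{\ell},\sigma\rangle=0$ and $\langle V_{\ell},\epsilon\rangle=\langle V_{\ell},\sigma\otimes\epsilon\rangle=2$. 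It then computes, via Theorem \ref{Tate_module_X_Y} and the double-coset counts you flagged, that $\langle V_{\ell},\textup{Ind}_{H'}^{G}\mathbf{1}\rangle=2$, and separately that $\langle\sigma\otimes\epsilon,\textup{Ind}_{H'}^{G}\mathbf{1}\rangle=1$; writing $\textup{Ind}_{H'}^{G}\mathbf{1}=(\sigma\otimes\epsilon)\oplus T$, these two facts force $\langle V_{\ell},T\rangle=0$, and then $\rho_1=T$, $\rho_2=\sigma$ exhibit $\Theta$ as a pseudo Brauer relation. Note that $\sigma$ really does occur in the comparison (indeed $\sigma$ is also a constituent of $\textup{Ind}_{H'}^{G}\mathbf{1}$), and its harmlessness rests on the separately proved vanishing $\langle V_{\ell},\sigma\rangle=0$ --- precisely the kind of information your $4=2+2$ bookkeeping cannot see. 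Your identification of the quotients, the genus-$1$ claim for $X_Y/H'$, and the treatment of $p\le 3$ via Proposition \ref{elliptic_involution} are consistent with the paper; it is only this vanishing argument that must be added to make the proof complete.
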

\begin{proof} 
The case where $p=2$ follows from Proposition \ref{elliptic_involution}.
For brevity, write $V_{\ell}$ to denote the $G$-representation $V_{\ell}(\textup{Jac}_{X_Y}) \otimes \mathbb{C}$, and we write $A_1,A_2$ and $A_3$  to denote the subgroups $S_{p}\times \{e\}, S_{p-1}\times C_2$ and $S_{p-1} \times \{e\}$ of $S_p \times C_2$ (as in Proposition \ref{Galois_closure}). In addition, we write $\mathbf{1}_{H}^{*}$ to denote the representation $\textup{Ind}_{H}^{S_p \times C_2} \mathbf{1}$. Then, decomposing into irreducible representations, we get
\[\mathbf{1}_{A_1}^{*} = \mathbf{1} \oplus \epsilon, \ \ \mathbf{1}_{A_2}^{*} = \mathbf{1} \oplus \sigma, \ \ \mathbf{1}_{A_3}^{*} = \mathbf{1} \oplus \epsilon \oplus \sigma \oplus (\sigma\otimes \epsilon), \]
By \cite[Lemma 3.18]{DGKM}, we have $\langle V_{\ell}, \mathbf{1}_{H}^{*} \rangle = 2 \cdot\textup{genus}(X_Z/H).$ The genera of the quotients of $X_Z$ by $A_1,A_2$ and $A_3$ are known in view of Proposition \ref{Galois_closure} allowing one to deduce that $
  \langle V_{\ell}, \mathbf{1} \rangle = \langle V_{\ell}, \sigma \rangle =0,$ while $\langle V_{\ell}, \epsilon \rangle =\langle V_{\ell}, \sigma \otimes \epsilon \rangle=2.$
In addition, for $p\geq 3$, 
\begin{equation*}
\#\langle \tau \rangle \backslash (S_p \times C_2) / H' = \begin{cases}
p^2-3p+3, & \tau = \tau_1, \\
\frac{1}{2}(p^2-3), & \tau=\tilde{\tau}_{\frac{p-3}{2}}, \\
\frac{1}{2}(p^2-1), & \tau=\tilde{\tau}_{\frac{p-1}{2}}.
\end{cases}
\end{equation*} 
Therefore, by invoking Theorem \ref{Tate_module_X_Y}, we deduce that $\langle V_{\ell}, \mathbf{1}_{H'}^{*} \rangle =2$, and therefore $\textup{genus}(X_Y/H') = 1.$ Moreover,
\[ \langle \sigma\otimes \epsilon, \mathbf{1}_{H'}^{*}  \rangle = \langle \mathbf{1}_{A_3}^{*}  - \mathbf{1}_{A_2}^{*} - \mathbf{1}_{A_1}^{*} + \mathbf{1}, \mathbf{1}_{H'}^{*}  \rangle = 3-2-1+1=1.  \]
Finally, by writing $\mathbf{1}_{H'}^{*} = (\sigma \otimes \epsilon) \oplus T$, we deduce that $\langle V_{\ell} ,T \rangle =0$ as we have shown that $\langle V_{\ell}, \sigma \otimes \epsilon \rangle =\langle V_{\ell}, \mathbf{1}_{H'}^{*} \rangle =2$. From this we deduce that $\Theta= ( S_{p-1}\times\{e\})-( S_{p}\times\{e\})-H'$ is a pseudo Brauer relation for $S_p \times C_2$ and $X_{Y}$. In view of the quotient curves in Proposition \ref{Galois_closure}, $\Theta$ verifies an isogeny $\textup{Jac}_{Y} \to E \times \textup{Jac}_{X_Y/H'}.$ \qedhere

\end{proof}

We now use the pseudo Brauer relation from Theorem \ref{complementary_elliptic_curve} in order to provide a local expression for $\textup{rk}_{p} \ \textup{Jac}_{Y}$. We first prove some auxiliary lemmata.

\begin{lemma} \label{multiplicities_in_selmer}Let $p$ be a rational prime. Then, the multiplicities of $\epsilon$ and $\sigma \otimes \epsilon$ in $\mathcal{X}_{p}(\textup{Jac}_{X_Y})$ satisfy
\begin{equation*}
\langle \mathcal{X}_{p}(\textup{Jac}_{X_Y}), \epsilon \rangle = \textup{rk}_{p}(E) \ \ \ \textup{and} \ \ \  \langle \mathcal{X}_{p}(\textup{Jac}_{X_Y}), \sigma \otimes \epsilon \rangle = \textup{rk}_{p}(\textup{Jac}_{X_Y/H'}),
\end{equation*}
\begin{proof}
We start by writing $\epsilon = \mathbf{1}_{A_1}^{*}  - \mathbf{1}$. Then, $\langle \mathcal{X}_{p} , \epsilon \rangle$ satisfies
\[
\langle \mathcal{X}_{p}, \mathbf{1}_{A_1}^{*}  - \mathbf{1} \rangle 
\underset{\mathrm{Frob. rec.}}{=}\textup{dim} \ \mathcal{X}_{p}^{A_1}  - \textup{dim} \ \mathcal{X}_{p}^{S_p \times C_2}  \underset{\textup{Prop.} \ref{Galois_closure}}{=}\textup{dim} \ \mathcal{X}_{p}(E).
\]
The second claim follows in the same way after expressing  $\sigma\otimes \epsilon = \mathbf{1}_{A_3}^{*}  - \mathbf{1}_{A_2}^{*} - \mathbf{1}_{A_1}^{*} + \mathbf{1}$, and therefore $\langle \mathcal{X}_{p}, \sigma \otimes \epsilon \rangle = \textup{rk}_{p}(\textup{Jac}_{Y}) - 0 - \textup{rk}_{p}(E) +0 = \textup{rk}_{p}(\textup{Jac}_{X_Y/H'})$ as required. \qedhere
\end{proof}

\end{lemma}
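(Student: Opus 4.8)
The plan is to reduce both multiplicity computations to a single organising principle: for any subgroup $H \leq G = S_p \times C_2$, Frobenius reciprocity gives $\langle \mathcal{X}_p(\textup{Jac}_{X_Y}), \textup{Ind}_H^G \mathbf{1} \rangle = \dim \mathcal{X}_p(\textup{Jac}_{X_Y})^H$, and the right-hand side is the Selmer rank $\textup{rk}_p(\textup{Jac}_{X_Y/H})$ of the quotient Jacobian, exactly as exploited in the proof of Lemma \ref{rep_strucutre} (this last identification being \cite[Remark A.29]{DGKM}). Once this is recorded, every inner product of $\mathcal{X}_p$ against a virtual permutation character turns into an explicit alternating sum of Selmer ranks of the quotient curves, whose identities and genera are already pinned down in Proposition \ref{Galois_closure}.

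For the first assertion I would write $\epsilon = \mathbf{1}_{A_1}^{*} - \mathbf{1}$, using the index-$2$ decomposition $\mathbf{1}_{A_1}^{*} = \mathbf{1} \oplus \epsilon$ for $A_1 = S_p \times \{e\}$. Applying the principle gives $\langle \mathcal{X}_p, \epsilon \rangle = \textup{rk}_p(\textup{Jac}_{X_Y/A_1}) - \textup{rk}_p(\textup{Jac}_{X_Y/G})$, and since Proposition \ref{Galois_closure} identifies $X_Y/A_1 = E$ while $X_Y/G = \mathbb{P}^{1}_{u}$ has genus $0$ and hence contributes nothing, this collapses to $\textup{rk}_p(E)$.

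For the second assertion I would first verify the virtual character identity $\sigma \otimes \epsilon = \mathbf{1}_{A_3}^{*} - \mathbf{1}_{A_2}^{*} - \mathbf{1}_{A_1}^{*} + \mathbf{1}$, which follows at once from the three permutation-character decompositions already used in Theorem \ref{complementary_elliptic_curve}, namely $\mathbf{1}_{A_1}^{*} = \mathbf{1} \oplus \epsilon$, $\mathbf{1}_{A_2}^{*} = \mathbf{1} \oplus \sigma$ and $\mathbf{1}_{A_3}^{*} = \mathbf{1} \oplus \epsilon \oplus \sigma \oplus (\sigma \otimes \epsilon)$. Pairing with $\mathcal{X}_p$ and invoking the principle again yields $\langle \mathcal{X}_p, \sigma \otimes \epsilon \rangle = \textup{rk}_p(\textup{Jac}_Y) - 0 - \textup{rk}_p(E) + 0$, the vanishing terms coming from $X_Y/A_2 = \mathbb{P}^{1}_{x}$ and $X_Y/G = \mathbb{P}^{1}_{u}$ having genus $0$, and the remaining ones from $X_Y/A_3 = Y$, $X_Y/A_1 = E$ in Proposition \ref{Galois_closure}.

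The final step, and the only point where more than bookkeeping is needed, is to identify $\textup{rk}_p(\textup{Jac}_Y) - \textup{rk}_p(E)$ with $\textup{rk}_p(\textup{Jac}_{X_Y/H'})$. Here I would invoke the $K$-rational isogeny $\textup{Jac}_Y \to E \times \textup{Jac}_{X_Y/H'}$ furnished by Theorem \ref{complementary_elliptic_curve}, together with the invariance of the $p^{\infty}$-Selmer rank under isogeny, giving $\textup{rk}_p(\textup{Jac}_Y) = \textup{rk}_p(E) + \textup{rk}_p(\textup{Jac}_{X_Y/H'})$. The one thing to check with care is that the foundational principle $\dim \mathcal{X}_p^H = \textup{rk}_p(\textup{Jac}_{X_Y/H})$ applies uniformly across all the subgroups $A_1, A_2, A_3, G$ and for every prime $p$ (including $p = 2$); this rests on \cite[Remark A.29]{DGKM} together with the geometric connectedness of $X_Y$ established in Proposition \ref{Galois_closure}, so I anticipate no real obstacle, only diligence in citing the correct compatibility.
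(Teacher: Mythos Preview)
Your proposal is correct and follows essentially the same route as the paper: both express $\epsilon$ and $\sigma\otimes\epsilon$ as the same virtual permutation characters, apply Frobenius reciprocity together with the identification $\dim\mathcal{X}_p^{H}=\textup{rk}_p(\textup{Jac}_{X_Y/H})$, and read off the quotients from Proposition~\ref{Galois_closure}. The only difference is that you spell out the final identification $\textup{rk}_p(\textup{Jac}_Y)-\textup{rk}_p(E)=\textup{rk}_p(\textup{Jac}_{X_Y/H'})$ via the isogeny of Theorem~\ref{complementary_elliptic_curve} and isogeny-invariance of the $p^\infty$-Selmer rank, whereas the paper leaves this step implicit.
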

\begin{lemma} \label{regulator_constants_non-morally-zero-reps} Let $\Theta=( S_{p-1}\times\{e\})-( S_{p}\times\{e\})-H'$ be the pseudo Brauer relation for $X_Y/K$ afforded by Theorem \ref{complementary_elliptic_curve}. Then,
$\mathcal{C}_{\Theta}(\epsilon) \equiv \mathcal{C}_{\Theta}(\sigma \otimes \epsilon) \equiv p \mod \mathbb{Q}^{\times 2}. $
\end{lemma}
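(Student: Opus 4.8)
The plan is to compute the regulator constant $\mathcal{C}_\Theta$ directly from its definition (Definition \ref{def:regulator_constants_for_pseudo_brauer}), applied separately to the one-dimensional representation $\epsilon$ and to the $(p-1)$-dimensional representation $\sigma\otimes\epsilon$. Recall $\Theta = (S_{p-1}\times\{e\}) - (S_p\times\{e\}) - H'$, so for a self-dual $\L[G]$-representation $\mathcal{V}$ the relevant vector spaces are $\mathcal{V}^{S_{p-1}\times\{e\}}$ in the numerator and $\mathcal{V}^{S_p\times\{e\}}\oplus\mathcal{V}^{H'}$ in the denominator, with the pairing on the $H$-invariants scaled by $1/|H|$. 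The key numerical inputs are the dimensions of these invariant subspaces, which I read off from the decompositions $\mathbf{1}_{A_1}^* = \mathbf{1}\oplus\epsilon$, $\mathbf{1}_{A_2}^* = \mathbf{1}\oplus\sigma$, $\mathbf{1}_{A_3}^* = \mathbf{1}\oplus\epsilon\oplus\sigma\oplus(\sigma\otimes\epsilon)$ established in the proof of Theorem \ref{complementary_elliptic_curve}, together with the branching computation $\langle \sigma\otimes\epsilon, \mathbf{1}_{H'}^*\rangle = 1$ proved there; by Frobenius reciprocity these give $\dim \mathcal{V}^H = \langle \mathcal{V}, \mathbf{1}_H^*\rangle$.

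First I would handle $\mathcal{V}=\epsilon$. From the decompositions, $\epsilon$ occurs in $\mathbf{1}_{A_3}^*$ and in $\mathbf{1}_{A_1}^*$ but not in $\mathbf{1}_{A_2}^*$, so $\dim\epsilon^{A_3}=1$, $\dim\epsilon^{A_1}=1$, $\dim\epsilon^{A_2}=0$; and since $\langle\epsilon, \mathbf{1}_{H'}^*\rangle = \langle \mathbf{1}_{A_1}^*-\mathbf{1}, \mathbf{1}_{H'}^*\rangle = 1-1 = 0$ (using that $H'$ surjects onto $C_2$, so $\epsilon^{H'}=0$), the denominator contributes only from $A_1 = S_p\times\{e\}$. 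Choosing any nonzero vector $v$ in the (one-dimensional) space $\epsilon$ as a basis for both $\epsilon^{A_3}$ and $\epsilon^{A_1}$, and using a fixed $G$-invariant pairing $\Langle,\Rangle$ with $\Langle v,v\Rangle = c$, the numerator determinant is $\tfrac{1}{|A_3|}c = \tfrac{c}{2(p-1)!}$ and the denominator determinant is $\tfrac{1}{|A_1|}c = \tfrac{c}{p!}$. Hence $\mathcal{C}_\Theta(\epsilon) \equiv \tfrac{p!}{2(p-1)!} = \tfrac{p}{2} \equiv 2p \equiv p \bmod \mathbb{Q}^{\times 2}$. (If instead $p=2$ one falls back to Proposition \ref{elliptic_involution}, but for the congruence statement any prime is fine since $2\equiv 2\bmod\mathbb{Q}^{\times2}$ gives the claim trivially there.)

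Next, for $\mathcal{V}=\sigma\otimes\epsilon$: it does not occur in $\mathbf{1}_{A_1}^*$ or $\mathbf{1}_{A_2}^*$, so the $A_1$ and $A_2$ contributions to both numerator and denominator vanish, and $\langle\sigma\otimes\epsilon, \mathbf{1}_{A_3}^*\rangle = 1$, $\langle\sigma\otimes\epsilon,\mathbf{1}_{H'}^*\rangle = 1$. So again both invariant spaces — $(\sigma\otimes\epsilon)^{A_3}$ in the numerator and $(\sigma\otimes\epsilon)^{H'}$ in the denominator — are one-dimensional, spanned by vectors $w$, $w'$ in the underlying irreducible $\sigma\otimes\epsilon$. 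Writing $w' = \lambda w + (\text{orthogonal complement within }\sigma\otimes\epsilon)$ is not immediate, but since $\sigma\otimes\epsilon$ is irreducible there is up to scalar a unique $G$-invariant pairing, and the determinant ratio is $\tfrac{|H'|^{-1}\Langle w',w'\Rangle}{|A_3|^{-1}\Langle w,w\Rangle} = \tfrac{|A_3|}{|H'|}\cdot\tfrac{\Langle w',w'\Rangle}{\Langle w,w\Rangle}$. Here $|A_3| = (p-1)!$ and $|H'| = 2\,(p-2)!$, so $\tfrac{|A_3|}{|H'|} = \tfrac{p-1}{2}$. The remaining factor $\Langle w',w'\Rangle/\Langle w,w\Rangle$ is where the real work lies: I expect it to be a rational square, or at worst to contribute a factor independent of $p$, so that $\mathcal{C}_\Theta(\sigma\otimes\epsilon) \equiv \tfrac{p-1}{2} \cdot (\text{square})$; one then checks $\tfrac{p-1}{2} \equiv p \bmod \mathbb{Q}^{\times 2}$ by noting $\tfrac{p-1}{2} \cdot \tfrac{p-1}{2} \cdot 2 = \tfrac{(p-1)^2}{2} \equiv 2 \bmod \mathbb{Q}^{\times2}$ is not quite it — rather one uses the standard trick (as in \cite{MR2680426}, \cite{tamroot}) that for regulator constants of this Brauer relation over $\mathbb{Q}$ one reduces mod squares, and the $\Langle w',w'\Rangle/\Langle w,w\Rangle$ term is controlled by realising the pairing explicitly via a permutation-module model, e.g. embedding $\sigma\otimes\epsilon$ into $\mathbf{1}_{A_3}^*$ with its natural (squared-norm-rational) pairing.

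\textbf{Main obstacle.} The genuinely delicate point is the computation of the ratio $\Langle w',w'\Rangle/\Langle w,w\Rangle$ of squared lengths of the $A_3$-fixed and $H'$-fixed vectors inside the irreducible $\sigma\otimes\epsilon$ — i.e. making the "elementary computation" honest. The clean way to do it is to work inside an explicit permutation model where $\sigma\otimes\epsilon$ sits inside $\mathbf{1}_{A_3}^* = \mathbb{Q}[G/A_3]$ (or a closely related permutation module on the $\binom{p}{2}\cdot 2$ or similar cosets), equip that module with the standard pairing in which the coset basis is orthonormal, and write both $w$ and $w'$ as explicit integer vectors; then $\Langle w,w\Rangle$ and $\Langle w',w'\Rangle$ are manifestly positive integers whose ratio is computed directly. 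I anticipate this ratio, combined with the $\tfrac{p-1}{2}$ from the group-order bookkeeping, yields exactly $p$ modulo $\mathbb{Q}^{\times2}$; verifying this is the crux, and it mirrors the analogous computations in \cite{MR2680426} for the $D_{2p}$ (dihedral) case and in \cite{CFKS}.
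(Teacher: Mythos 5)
Your overall setup is the same as the paper's (apply Definition \ref{def:regulator_constants_for_pseudo_brauer} to $\Theta$, note that the fixed spaces of $S_{p-1}\times\{e\}$ and $H'$ in each representation are at most one-dimensional while $(\sigma\otimes\epsilon)^{S_p\times\{e\}}=0$, and take the ratio of the $\tfrac1{|H|}$-scaled Gram determinants), and your dimension bookkeeping via Frobenius reciprocity is correct. But there is a genuine gap: for $\sigma\otimes\epsilon$ the decisive quantity, the norm ratio $\Langle w',w'\Rangle/\Langle w,w\Rangle$ of the $H'$-fixed and $(S_{p-1}\times\{e\})$-fixed vectors, is never computed, and your expectation that it is a square or at worst $p$-independent is false. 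In the standard model --- $\sigma\otimes\epsilon$ spanned by $v_i=e_{i+1}-e_1$, $i=1,\dots,p-1$, with $(\rho_1,\rho_2)$ acting by $\operatorname{sgn}(\rho_2)\,(e_{\rho_1(i+1)}-e_{\rho_1(1)})$ and invariant pairing $\Langle v_i,v_i\Rangle=2$, $\Langle v_i,v_j\Rangle=1$ for $i\neq j$ --- the fixed vectors are $u_1=\sum_{i=1}^{p-1}v_i$ for $S_{p-1}\times\{e\}$, of norm $p(p-1)$, and $u_2=v_{p-2}-v_{p-1}$ for $H'$, of norm $2$. So the norm ratio is $2/(p(p-1))$, and it is exactly this term, combined with your order factor $(p-1)/2$, that produces $1/p\equiv p \pmod{\mathbb{Q}^{\times2}}$. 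Your fallback suggestion that one "checks $(p-1)/2\equiv p$" cannot work ($(p-1)/2\not\equiv p$ modulo squares in general), so the step you yourself flag as the main obstacle is precisely the content of the lemma and must be carried out as above.

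There is also a slip in the $\epsilon$ computation: $|S_{p-1}\times\{e\}|=(p-1)!$, not $2(p-1)!$ (you substituted the order of $S_{p-1}\times C_2$), so the correct ratio is $p!/(p-1)!=p$ on the nose; your value $p/2$ is then repaired by the false congruence $2p\equiv p\pmod{\mathbb{Q}^{\times2}}$. With the correct order, the $\epsilon$ case is exactly the paper's "trivial pairing" computation, and no congruence gymnastics are needed.
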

\begin{proof} The calculation for $\mathcal{C}_{\Theta}(\epsilon)$ is straightforward (one can choose the trivial pairing on $\epsilon$). We fix a basis $\{e_1,\dots,e_p\}$ for $\mathbb{C}^{p}$. Then, $\sigma \otimes \epsilon$ corresponds to the subspace spanned by $v_{i}=e_{i+1}-e_1$ for $i=1,\dots,p-1$ with $(\rho_1,\rho_2) \in S_{p} \times C_{2}$ acting via $(\rho_1,\rho_2)\cdot v_{i}= \textup{sgn}(\rho_2)(e_{\rho_1(i+1)}-e_{\rho_1(1)})$. A non-degenerate, $G$-invariant pairing on $\sigma \otimes \epsilon$ is given by setting the diagonal entries
$ \Langle  v_{i}, v_{i} \Rangle  =2$ while $\Langle v_{i}, v_{j} \Rangle =1$ for $i \neq j$. Suppose that the $S_{p-1}$ factor in $S_{p-1} \times \{e\}$ stabilises $1$. Then for $H\in \{S_{p-1} \times \{e\}, S_{p} \times \{e\}, H'\}$, the space of $H$-invariant vectors is $1$-dimensional spanned by $u_1=\sum_{i=1}^{p-1} v_i$, $0$-dimensional and $1$-dimensional spanned by $u_2=v_{p-2} - v_{p-1}$ respectively. Therefore, $\Langle u_1, u_1 \Rangle = p(p-1)$, while $\Langle u_2, u_2 \Rangle =2$. Putting everything together,
\[\mathcal{C}_{\Theta}(\sigma\otimes \epsilon) \equiv \frac{\frac{1}{(p-1)!}\Langle u_1,u_1 \Rangle}{\frac{1}{2(p-2)!}\Langle u_2,u_2 \Rangle} \equiv p \mod \mathbb{Q}^{\times 2}. \qedhere \]

\end{proof}
\begin{theorem} \label{Thm:local_formula_genus2_split} Let $Y/K$ be a genus $2$ curve which admits a $K$-rational morphism $\phi:Y\to E$ of prime degree $p$. In addition, if $p$ is an odd prime, suppose that $\phi$ is generic. Then,
\[\textup{rk}_{p}(\textup{Jac}_{Y}) \equiv \sum_{v \ \textup{place of} \ K} \textup{ord}_{p} \ \Lambda_{\Theta}(X_{Y}/K_{v}) \ \mod 2, \]where $\Theta=(S_{p-1}\times \{e\})-(S_p \times \{e\})-H'$ is the pseudo Brauer relation for $S_p \times C_2$ and $X_{Y}$ afforded by Theorem \ref{complementary_elliptic_curve}.
\end{theorem}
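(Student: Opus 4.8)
The plan is to apply Theorem~\ref{regulator_constant_rank_parity} with the prime $p$ to the pseudo Brauer relation $\Theta=(S_{p-1}\times\{e\})-(S_{p}\times\{e\})-H'$ for $S_{p}\times C_{2}$ and $X_{Y}$ supplied by Theorem~\ref{complementary_elliptic_curve}. Its hypotheses are immediate: every irreducible representation of $S_{p}\times C_{2}$ is realisable over $\mathbb{Q}$ (this holds for $S_{n}$ and for $C_{2}$, hence for their direct product), so $\Omega^{1}(\textup{Jac}_{X_{Y}})$ is in particular self-dual and realisable over $\mathbb{Q}$, which is all that is needed both for Theorem~\ref{regulator_constant_rank_parity} to apply and for $\Lambda_{\Theta}$ to be defined. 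The case $p=2$ is already contained in Proposition~\ref{elliptic_involution}, so I would dispose of it first and assume $p$ odd for the remainder.

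It then remains to show that the left-hand side of Theorem~\ref{regulator_constant_rank_parity}, namely $\sum_{\tau\in S_{\Theta,p}}\langle\tau,\mathcal{X}_{p}(\textup{Jac}_{X_{Y}})\rangle$, is congruent to $\textup{rk}_{p}(\textup{Jac}_{Y})$ modulo $2$; I would do this by establishing $S_{\Theta,p}\subseteq\{\epsilon,\sigma\otimes\epsilon\}$. The containment uses Lemma~\ref{trivial_regulator_constant}(1): since the subgroups occurring in $\Theta$ are $S_{p-1}\times\{e\}$, $S_{p}\times\{e\}$ and $H'$, a self-dual irreducible $\rho$ has $\mathcal{C}_{\Theta}(\rho)=1$ unless $\rho$ is a constituent of one of the three permutation modules $\mathbb{C}[G/(S_{p-1}\times\{e\})]$, $\mathbb{C}[G/(S_{p}\times\{e\})]$, $\mathbb{C}[G/H']$. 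The first two were decomposed in the proof of Theorem~\ref{complementary_elliptic_curve}; decomposing the third (for instance by realising $H'$ as an index-$2$ subgroup of $(S_{p-2}\times S_{2})\times C_{2}$ and applying Pieri's rule) shows that its irreducible constituents are $\mathbf{1}$, $\sigma$, $\sigma\otimes\epsilon$, together with, when $p\ge 5$, the two further self-dual irreducibles lifted from the Specht representations $S^{(p-2,2)}$ and $S^{(p-2,1,1)}$ of $S_{p}$ (the latter tensored with $\epsilon$). One discards $\mathbf{1}$ and $\sigma$ because $\langle\mathcal{X}_{p},\mathbf{1}\oplus\sigma\rangle=\textup{rk}_{p}(\textup{Jac}_{X_{Y}/(S_{p-1}\times C_{2})})=0$, the quotient being a curve of genus $0$ by Proposition~\ref{Galois_closure} (the argument via Frobenius reciprocity is as in Lemma~\ref{rep_strucutre}). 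One discards the two exceptional Specht constituents by checking that $\textup{ord}_{p}$ of $\mathcal{C}_{\Theta}$ on each of them is even: for both of these representations the invariant subspaces $\mathcal{V}^{S_{p-1}\times\{e\}}$ and $\mathcal{V}^{S_{p}\times\{e\}}$ vanish and $\mathcal{V}^{H'}$ is one-dimensional, so $\mathcal{C}_{\Theta}$ equals, up to sign, $|H'|$ divided by the squared length of a generator of $\mathcal{V}^{H'}$, and neither quantity is divisible by $p$. (Alternatively, one may verify that these two representations do not occur in $V_{\ell}(\textup{Jac}_{X_{Y}})$, using Theorem~\ref{Tate_module_X_Y} and a character computation.)

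Granting $S_{\Theta,p}\subseteq\{\epsilon,\sigma\otimes\epsilon\}$, Lemma~\ref{regulator_constants_non-morally-zero-reps} gives that $\textup{ord}_{p}\,\mathcal{C}_{\Theta}(\epsilon)$ and $\textup{ord}_{p}\,\mathcal{C}_{\Theta}(\sigma\otimes\epsilon)$ are both odd, so whichever of $\epsilon$, $\sigma\otimes\epsilon$ fails to lie in $S_{\Theta,p}$ must have multiplicity $0$ in $\mathcal{X}_{p}$; hence $\sum_{\tau\in S_{\Theta,p}}\langle\tau,\mathcal{X}_{p}\rangle=\langle\epsilon,\mathcal{X}_{p}\rangle+\langle\sigma\otimes\epsilon,\mathcal{X}_{p}\rangle$. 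By Lemma~\ref{multiplicities_in_selmer} this sum equals $\textup{rk}_{p}(E)+\textup{rk}_{p}(\textup{Jac}_{X_{Y}/H'})$, which equals $\textup{rk}_{p}(\textup{Jac}_{Y})$ by the isogeny $\textup{Jac}_{Y}\to E\times\textup{Jac}_{X_{Y}/H'}$ of Theorem~\ref{complementary_elliptic_curve} (isogenous abelian varieties have equal $p^{\infty}$-Selmer ranks). Substituting into Theorem~\ref{regulator_constant_rank_parity} then gives the claimed local expression.

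The step I expect to be the main obstacle is pinning down $S_{\Theta,p}$ exactly---concretely, excluding the stray Specht constituents of $\mathbb{C}[G/H']$. Everything else is either a direct appeal to an earlier result (Theorems~\ref{regulator_constant_rank_parity}, \ref{complementary_elliptic_curve}, \ref{Tate_module_X_Y} and Lemmas~\ref{multiplicities_in_selmer}, \ref{regulator_constants_non-morally-zero-reps}) or an elementary computation of exactly the kind already performed elsewhere in the paper. The small case $p=3$, where one of the two Specht modules degenerates and the ramification data of Proposition~\ref{Galois_closure} is slightly special, should be inspected separately, but presents no genuine difficulty.
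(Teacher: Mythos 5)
Your skeleton is the paper's: handle $p=2$ via Proposition \ref{elliptic_involution}, apply Theorem \ref{regulator_constant_rank_parity}, cut $S_{\Theta,p}$ down to $\{\epsilon,\sigma\otimes\epsilon\}$ via Lemma \ref{trivial_regulator_constant}(1), then finish with Lemmas \ref{regulator_constants_non-morally-zero-reps} and \ref{multiplicities_in_selmer} and the isogeny of Theorem \ref{complementary_elliptic_curve}. But the step you yourself flag as the main obstacle --- discarding the extra constituents of $\mathbb{C}[G/H']$ (the lifts of $S^{(p-2,2)}$ and of $S^{(p-2,1,1)}\otimes\epsilon$, resp.\ $\textup{sgn}\otimes\epsilon$ for $p=3$) --- is handled incorrectly. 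For such a constituent $\rho$ the invariant subspaces are $0$-dimensional for $S_{p-1}\times\{e\}$ and $S_{p}\times\{e\}$ but $1$-dimensional for $H'$, so the multiplicities of $\rho$ on the two sides of $\Theta$ do not balance: $\Theta$ is \emph{not} a pseudo Brauer relation relative to $\rho$ (these are exactly the ``defect'' representations $\rho_1,\rho_2$ of Definition \ref{def:pseudo Brauer_relations}), and $\mathcal{C}_{\Theta}(\rho)$ is not well defined by Definition \ref{def:regulator_constants_for_pseudo_brauer}. Concretely, your value ``$|H'|$ divided by the squared length of a generator of $\rho^{H'}$'' changes by $\lambda^{-1}$ when the (unique up to scalar) invariant pairing is rescaled by $\lambda$; taking $\lambda=p$ flips the parity of $\textup{ord}_{p}$, so ``checking that $\textup{ord}_{p}\,\mathcal{C}_{\Theta}(\rho)$ is even'' is not a meaningful criterion. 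Your fallback is also insufficient: non-occurrence of $\rho$ in $V_{\ell}(\textup{Jac}_{X_Y})$ (which Theorem \ref{Tate_module_X_Y} does give) does not imply non-occurrence in $\mathcal{X}_{p}(\textup{Jac}_{X_Y})$, since membership in $S_{\Theta,p}$ is a condition on $\mathcal{X}_{p}$ and Selmer multiplicities are not bounded by Tate-module multiplicities by anything cited in the paper ($p^{\infty}$-Selmer coranks are not unconditionally bounded by $2\dim$).

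The correct exclusion --- and what the paper's terse appeal to ``the proof of Theorem \ref{complementary_elliptic_curve}'' amounts to --- is to show these constituents occur with multiplicity zero in $\mathcal{X}_{p}$, using the same technique you already use to kill $\mathbf{1}$ and $\sigma$: by Frobenius reciprocity and \cite[Remark A.29]{DGKM}, $\langle\mathcal{X}_{p},\textup{Ind}_{H'}^{S_p\times C_2}\mathbf{1}\rangle=\dim\mathcal{X}_{p}^{H'}=\textup{rk}_{p}(\textup{Jac}_{X_Y/H'})$, while Lemma \ref{multiplicities_in_selmer} gives $\langle\mathcal{X}_{p},\sigma\otimes\epsilon\rangle=\textup{rk}_{p}(\textup{Jac}_{Y})-\textup{rk}_{p}(E)$, which equals $\textup{rk}_{p}(\textup{Jac}_{X_Y/H'})$ by the isogeny of Theorem \ref{complementary_elliptic_curve}. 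Hence the complement $T$ of $\sigma\otimes\epsilon$ in $\textup{Ind}_{H'}^{S_p\times C_2}\mathbf{1}$ satisfies $\langle\mathcal{X}_{p},T\rangle=0$, so the stray constituents already fail the condition $\langle\rho,\mathcal{X}_{p}\rangle>0$ of Definition \ref{def_special_subset} and cannot lie in $S_{\Theta,p}$, with no regulator-constant computation needed for them. With that substitution your argument closes and coincides with the paper's proof; the remaining points (rationality of all irreducibles of $S_{p}\times C_{2}$, the genus-$0$ quotients disposing of $\mathbf{1}$ and $\sigma$, and the final bookkeeping, including your correct observation that any element of $\{\epsilon,\sigma\otimes\epsilon\}$ absent from $S_{\Theta,p}$ has multiplicity zero) are all fine.
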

\begin{proof}
The $p=2$ case is dealt with in Proposition \ref{elliptic_involution}. We write $\mathcal{X}_p$ to denote $\mathcal{X}_{p}(\textup{Jac}_{X_Y})$. Then, we claim that $S_{\Theta,p} = \{\epsilon, \sigma\otimes \epsilon \}.$ Indeed, by combining the proof of Theorem \ref{complementary_elliptic_curve} with Lemma \ref{trivial_regulator_constant}(1), we deduce that $S_{\Theta,p} \subseteq \{\epsilon, \sigma\otimes \epsilon\}$. By Lemma \ref{regulator_constants_non-morally-zero-reps}, it follows that $S_{\Theta,p} = \{\epsilon, \sigma\otimes \epsilon\}$.   Since any $S_p\times C_2$-representation is self-dual, then Theorem \ref{regulator_constant_rank_parity} is applicable. In view of Lemma \ref{multiplicities_in_selmer}, the result follows upon applying Theorem \ref{regulator_constant_rank_parity} to $\Theta$ afforded by Theorem \ref{complementary_elliptic_curve}.\end{proof}
\begin{remark}
We note that elliptic subcovers occur in pairs $(\phi,\psi)$. In particular, given a genus $2$ curve admitting an elliptic subcover $\phi:Y \to E$, then there exists a complimentary elliptic subcover $\psi:Y \to E'$ of the same degree, see \cite{Kuhn}. Therefore, upon interchanging the roles of $\phi$ and $\psi$ and $E$ with $E'$ if needed, we note that the conditions for the validity of Proposition \ref{Galois_closure}, Theorem \ref{complementary_elliptic_curve} and Theorem \ref{Thm:local_formula_genus2_split} may be relaxed to apply when $\phi$ or its complementary cover $\psi$ conforms to the condition of being generic.
\end{remark}

\begin{remark}
Drawing from customary techniques, we can derive an analogous local formula in this case is as follows. We let $\phi: Y \to E$ be a degree $p$ elliptic subcover, and write $\psi:Y \to E'$ for the complimentary elliptic subcover. Then, $F=\phi^{*}+\psi^{*}$ is a $(p,p)$ isogeny satisfying $F \circ F^{\vee} = [p]_{\textup{Jac}_{Y}}$. By invoking \cite[Theorem 5]{Yu}, we deduce
\[\textup{rk}_{p}(\textup{Jac}_{Y}) \equiv \sum_{v }  \textup{ord}_{p} \ \frac{C_{v} \ (\textup{Jac}_{Y}, \phi^{*} \omega_{v}^{0}(E) \wedge \psi^{*} \omega_{v}^{0}(E')   )}{c_{v} (E)c_{v}({E'})} \cdot \theta_{v} \mod 2, \] where  $\omega_{v}^{0}(E)$ and $\omega_{v}^{0}(E')$ denote a choice of minimal differentials on $E/K_{v}$ and ${E'}/K_{v}$ respectively, while $\theta_{v} =  \frac{\mu_{v}(Y)}{\mu_{v}(E) \cdot \mu_{v}(E')}$ if $p=2$, and otherwise $\theta_v=1$. We note that this method for obtaining this local expression for $\textup{rk}_{p}(\textup{Jac}_{Y})$ makes no assumption on $\phi$  or $\psi$ being generic. \end{remark}

\bibliographystyle{plain}

\bibliography{references}

\begin{thebibliography}{99}
 \bibliographystyle{unsrt}            

\bibitem{Bruin-Flynn}
N. Bruin, E. Flynn, 
\newblock{\em n-covers of hyperelliptic curves}
\newblock{ Math. Proc. Cambridge Philos. Soc. 134 (2003), no. 3, 397–-405.}

\bibitem{Bruin}
N. Bruin, K. Doerksen.
\newblock {\em The arithmetic of genus two curves with (4, 4)-split Jacobians.}
\newblock{ Canadian Journal of Mathematics, 63(5), (2011), 992-1024.}

\bibitem{bruin-prym}
N. Bruin, E. Sertöz, 
\newblock{ \em Prym varieties of genus four curves.}
\newblock{ Transactions of the American Mathematical Society, 373(1), (2020), 149-183.}

\bibitem{Chen-isogeny} 
I. Chen. 
\newblock{ \em The Jacobian of the modular curve $X_{\textup{non-split}}^{+}(p)$.}
\newblock{ Oxford, 1994.}

\bibitem{MR1779608}
I. Chen.
\newblock {\em On relations between Jacobians of certain modular curves}.
\newblock{ Journal of algebra 231, no. 1 (2000), 414-448.}


\bibitem{CFKS}
J. Coates, T. Fukaya, K. Kato, R. Sujatha.
\newblock {\em Root numbers, Selmer groups, and non-commutative Iwasawa theory}.
\newblock {J. Algebraic Geom. 19 (2010), 19-97.}


\bibitem{Diamond-Shurman}
F. Diamond, J. Shurman
\newblock{ \em A First Course in Modular Forms}
\newblock{ Springer New York, 2005.}
 
\bibitem{docking20212}
J. Docking.
\newblock {\em $2^\infty $-Selmer rank parities via the Prym construction}.
\newblock { Preprint, arXiv:2108.09564, (2021).}

\bibitem{MR2167089}
V. Dokchitser.
\newblock {\em Root numbers of non-abelian twists of elliptic curves}.
\newblock Proc. London Math. Soc., no. 2 (2005), 300-324.


\bibitem{tamroot}
T. Dokchitser, V. Dokchitser.
\newblock {\em Regulator constants and the parity conjecture}.
\newblock Invent. Math. 178, no. 1 (2009), 23-71.


\bibitem{MR2680426}
T. Dokchitser, V. Dokchitser.
\newblock {\em On the Birch--Swinnerton-Dyer quotients modulo squares}.
\newblock  Annals of Math. 172, no. 1 (2010), 567-596.

\bibitem{etalecohomology_of_hyperelliptic_curves}
T. Dokchitser, V. Dokchitser. 
\newblock{ \em Quotients of hyperelliptic curves and étale cohomology.}
\newblock{ The Quarterly Journal of Mathematics 69, no. 2 (2018): 747-768.}

\bibitem{DGKM}
V. Dokchitser, H. Green, A. Konstantinou, A. Morgan.
\newblock{\em Parity of ranks of Jacobians of curves.}
\newblock{ Preprint, arXiv:2211.06357, (2022)}

\bibitem{DM2019}
V. Dokchitser, C. Maistret.
\newblock{\em On the parity conjecture for abelian surfaces.}
\newblock{ Proceedings of the London Mathematical Society, 127(2), 295–365. doi:10.1112/plms.12545.}

\bibitem{donagi}
R. Donagi.
\newblock{\em The fibers of the Prym map}
\newblock{ Contemp. Math. 136 (1992), 55–125.}

\bibitem{Edixhoven-verification}
B. Edixhoven.
\newblock{\em On a result of Imin Chen.}
\newblock{Preprint, arXiv:alg-geom/9604008, (1996)}

\bibitem{Ellenberg}
J.S. Ellenberg.
\newblock{ \em Endomorphism algebras of Jacobians.}
\newblock{ Advances in Mathematics 162, no. 2 (2001): 243-271.}

\bibitem{HollyCeline}
H. Green, C. Maistret.
\newblock {\em The 2-parity conjecture for elliptic curves with isomorphic 2-torsion}. 
\newblock Proc. of the Royal Soc. A 478, no. 2265 (2022).

\bibitem{MR1000113}
E. Kani, M, Rosen. 
\newblock {\em Idempotent relations and factors of Jacobians}.
\newblock  Math. Ann. 284, no. 2 (1989), 307-327.

\bibitem{Kida}
M. Kida.
\newblock{ \em Galois descent and twists of an abelian variety.}
\newblock{ Acta Arithmetica 73, no. 1 (1995): 51-57.}


\bibitem{MR597871}
 K. Kramer.
\newblock {\em Arithmetic of elliptic curves upon quadratic extension}.
\newblock  Trans. Amer. Math. Soc. 264, no. 1 (1981), 121-135.

\bibitem{KT}
K. Kramer, J. Tunnell.
\newblock {\em Elliptic curves and local $\epsilon$-factors}.
\newblock Compos. Math. 46, (1982), 307-352.

\bibitem{Kuhn}
R.M. Kuhn.
\newblock {\em Curves of genus 2 with split Jacobian.}
\newblock Transactions of the American Mathematical Society, 307(1), (1988), 41-49.

\bibitem{decomposing_jacobians}
D. Lombardo, E. L. García, C. Ritzenthaler ,  J. Sijsling.
 \newblock{ \em Decomposing Jacobians via Galois covers}
\newblock{Experimental Mathematics, 32(1), pp.218-240.} 


\bibitem{MazurRubin}
B. Mazur, K. Rubin.
\newblock {\em Finding large Selmer rank via an arithmetic theory of local constants}.
\newblock Ann. of Math. 166 (2007), 579-612.

\bibitem{Miranda}
R. Miranda.
\newblock{ \em Algebraic curves and Riemann surfaces.}
\newblock{ Vol. 5. American Mathematical Soc. (1995).}

\bibitem{morgan}
A. Morgan.
\newblock {\em $2$-Selmer parity for hyperelliptic curves in quadratic extensions.}
\newblock {Proceedings of the London Mathematical Society. doi:10.1112/plms.12565. }

\bibitem{Paulhus}
J. Paulhus.
\newblock{\em Decomposing Jacobians of curves with extra automorphisms,}
\newblock{ Acta Arith. 132 (2008), no. 3, 231–244.}


\bibitem{Shioda}
T. Shioda.
\newblock{\em Some remarks on abelian varieties.}
\newblock{ J. Fac. Sci. Univ. Tokyo Sect. IA 24 (1977): 11-21.}

\bibitem{correspondence}
B. A. Smith.
\newblock{\em Explicit endomorphisms and correspondences.} 
\newblock{PhD Thesis, (2005).}

\bibitem{algebraic_function_fields}
H. Stichtenoth.
\newblock{\em Algebraic function fields and codes.} 
\newblock{ Vol. 254. Springer Science \& Business Media, (2009).}


\bibitem{Yu}
H. Yu.
\newblock{\em Idempotent relations and the conjecture of Birch and Swinnerton-Dyer,}
\newblock{ Mathematische Annalen 327 (2003): 67-78.}


\end{thebibliography}
\end{document}